\newcommand{\BA}{{\mathbb {A}}}
\newcommand{\BC}{{\mathbb {C}}}
\newcommand{\BF}{{\mathbb {F}}}
\newcommand{\BN}{{\mathbb {N}}}
\newcommand{\BP}{{\mathbb {P}}}
\newcommand{\BQ}{{\mathbb {Q}}}
\newcommand{\BS}{{\mathbb {S}}}
\newcommand{\BZ}{{\mathbb {Z}}}
\newcommand{\CA}{{\mathcal {A}}}
\newcommand{\CH}{{\mathcal {H}}}
\newcommand{\CN}{{\mathcal {N}}}
\newcommand{\CO}{{\mathcal {O}}}
\newcommand{\CR }{{\mathcal {R}}}
\newcommand{\fa}{{\mathfrak{a}}}
    \newcommand{\fm}{{\mathfrak{m}}} 
     \newcommand{\fp}{{\mathfrak{p}}}
     \newcommand{\fN}{{\mathfrak{N}}}
\newcommand{\msa}{\mathscr{A}}
\newcommand{\msh}{\mathscr{H}}
\newcommand{\RT}{{\mathrm {T}}}
\newcommand{\alg}{{\mathrm{alg}}}
\newcommand{\Aut}{{\mathrm{Aut}}}
\newcommand{\End}{{\mathrm{End}}}
\newcommand{\Frob}{{\mathrm{Frob}}}
\newcommand{\Gal}{{\mathrm{Gal}}}
\newcommand{\GL}{{\mathrm{GL}}}
\newcommand{\Hom}{{\mathrm{Hom}}}
\newcommand{\Isom}{{\mathrm{Isom}}}
\newcommand{\Nr}{{\mathrm{Nr}}}
\newcommand{\ord}{{\mathrm{ord}}}
\newcommand{\rank}{{\mathrm{rank}}}
\newcommand{\Pic}{\mathrm{Pic}}
 \font\cyr=wncyr10
    \newcommand{\Sha}{\hbox{\cyr X}}
\newcommand{\SL}{{\mathrm{SL}}}
\newcommand{\tor}{{\mathrm{tor}}}
\newcommand{\Vol}{{\mathrm{Vol}}}
\newcommand{\ab}{{\mathrm{ab}}}
\renewcommand{\mod}{\, \mathrm{mod}\, }
\renewcommand{\pmod}[1]{\, (\mathrm{mod}\, #1) }
\newcommand{\wh}{\widehat}
\newcommand{\pair}[1]{\langle {#1} \rangle}
\newcommand{\ov}{\overline}
\newcommand{\lra}{\longrightarrow}
\newcommand{\lla}{\longleftarrow}
\newcommand{\ra}{\rightarrow}
\newcommand{\bs}{\backslash}
\newtheorem{thm}{Theorem}[section]
\newtheorem{lem}[thm]{Lemma}
\newtheorem{prop}[thm]{Proposition}
\newtheorem{defn}[thm]{Definition}
\newtheorem{theorem}{Theorem}[section]
\theoremstyle{definition}
\newtheorem{example}[theorem]{Example}
\theoremstyle{remark}
\newtheorem{remark}[thm]{Remark}
\numberwithin{equation}{subsection}
\newcommand{\sk}{\medskip}
\newcommand{\s}{\sk\noindent}
    \numberwithin{equation}{section}
\def\mat(#1,#2,#3,#4){
  \begin{pmatrix}
  #1 & #2 \\ #3 & #4
  \end{pmatrix}
}
\begin{document}
\title{Heegner points on modular curves}

\author{Li Cai, Yihua Chen and Yu Liu}

\address{Li Cai: Yau Mathematical Sciences Center, Tsinghua University, Beijing
100084} \email{lcai@math.tsinghua.edu.cn}

\address{Yihua Chen: Academy of Mathematics and Systems
Science, Morningside center of Mathematics, Chinese Academy of
Sciences, Beijing 100190} \email{yihuachenamss@163.com}

\address{Yu Liu: Yau Mathematical Sciences Center, Tsinghua University, Beijing
100084} \email{liuyumaths@163.com}

\thanks{Li Cai was supported by the Special Financial Grant from the China Postdoctoral Science Foundation 2014T70067.}

\begin{abstract}
In this paper, we study the Heegner points on more general modular curves other than $X_0(N)$,
which generalizes Gross' work ''Heegner points on $X_0(N)$".
The explicit Gross-Zagier formula and the Euler system property are stated in this case.
Using such kind of Heegner points, we construct certain families of quadratic twists of $X_0(36)$,
with the ranks of Mordell-Weil groups being zero and one respectively,
and show that the $2$-part of their BSD conjectures hold.
\end{abstract}

\maketitle

\tableofcontents

\section{\bf Introduction}
Let $\phi=\sum_{n=1}^\infty a_n q^n$ be a newform of weight $2$, level $\Gamma_0(N)$, normalized such that $a_1=1$. Let $K$ be an imaginary quadratic field of discriminant $D$ and $\chi$ a (primitive) ring class character over $K$ of conductor $c$, i.e. a character of $\Pic(\CO_c)$ where $\CO_c$ is the order $\BZ+c\CO_K$ of $K$.  Let $L(s,\phi,\chi)$ be the Rankin-Selberg convolution of $\phi$ and $\chi$.
Assume the Heegner condition:
\begin{enumerate}
\item  $(c, N)=1$,
\item   Any prime $p|N$ is either split in $K$ or ramified in $K$ with $\ord_p(N)=1$
	and $\chi([\fp])\neq a_p$, where $\fp$ is the unique prime ideal of $\CO_K$ above $p$
	and $[\fp]$  is its class in $\Pic(\CO_c)$.
\end{enumerate}

Under this condition, the sign of $L(s,\phi,\chi)$ is $-1$ and
Gross studies the Heegner points on $X_0(N)$ in \cite{GR2}.
It's well known that $X_0(N)(\BC)$ parameterizes the pairs $(E,C)$, with $E$ an elliptic curve over $\BC$ and $C$ a cyclic subgroup of $E$ of order $N$. By the Heegner condition, there exists a proper ideal $\CN$ of $\CO_c$ such that $\CO_c/\CN \cong \BZ/N\BZ$. For any proper ideal $\fa$ of $\CO_c$, let $P_\fa\in X_0(N)$ be the point representing $(\BC/\fa,\fa\CN^{-1}/\fa)$, which is defined over the ring class field $H_c$, the abelian extension of $K$ with Galois group $\Pic(\CO_c)$ by class field theory. Such points are called Heegner points over $K$ of conductor $c$ and only depends on the class of $\fa$ in $\Pic(\CO_c)$.

Let $J_0(N)$ be the Jacobian of $X_0(N)$ and the cusp $[\infty]$ on $X_0(N)$ defines a morphism from $X_0(N)$ to $J_0(N)$ over $\BQ$: $P\mapsto [P-\infty]$. Let $P_\chi$ be the point $$P_\chi=\sum_{[\fa]\in \Pic(\CO_c)} [P_\fa-\infty]\otimes \chi([\fa])\in J_0(N)(H_c)\otimes_\BZ \BC$$
and $P_\chi^\phi$  the  $\phi$-isotypical component of $P_\chi$. Then under the Heegner condition, Cai-Shu-Tian \cite{CST} gives an explicit form of Gross-Zagier formula which relates height of $P_\chi^\phi$ to $L'(1,\phi,\chi)$. In fact, they give an explicit form of Gross-Zagier formula in general Shimura curve case.

Let the data $(\phi,K,\chi)$ be as above, and generalize the Heegner condition to the following one $(*)$:
\begin{enumerate}
\item[(i)]  $(c, N)=1$,
\item[(ii)]  if prime $p|N$ is inert in $K$, then $\ord_pN$ is even; if $p|N$ is ramified in $K$, then $\ord_pN=1$ and $\chi([\fp])\neq a_p$, where $\fp$ is the unique prime ideal of $\CO_K$ above $p$ and $[\fp]$  is its class in $\Pic(\CO_c)$.
\end{enumerate}

By this assumption, write $N=N_0N_1^2$, with $p|N_1$ if and only if $p$ is inert. Given an embedding $K\hookrightarrow M_2(\BQ)$ such that $K\cap M_2(\BZ)=K\cap R_0(N_0)=\CO_c$, where
$$R_0(N_0)=\left\{A\in M_2(\BZ)\Big|A\equiv\mat(*,*,0,*)\pmod {N_0}\right\}.$$
Then $R=\CO_c+N_1R_0(N_0)$ is an order of $M_2(\BZ)$. Define
$$\Gamma_K(N)=R^\times\cap\SL_2(\BZ)=\left\{ A\in\SL_2(\BZ)\left|\begin{array}{l}
                                        A\equiv\mat(*,*,0,*)\pmod{N_0}\\
                                        A\mod N_1\in R/N_1R
                                     \end{array}\right.
\right\}.$$
The modular curve now we have to consider is $X_K(N)=\Gamma_K(N)\bs\msh\cup\{\text{cusps}\}$.

This modular curve is not the usual modular curve of the form $X_0(M)$ any longer, if $N_1\neq 1$. $X_K(N)$ parameterizes $(E,C,\alpha)$ where $E$ is an elliptic curve over $\BC$, $C$ is a cyclic subgroup of $E$ of order $N_0$ and $\alpha$ is an $H$-orbit of an isomorphism $(\BZ/N_1\BZ)^2\simeq E[N_1]$, where $$H=(\CO_K/N_1\CO_K)^\times\subset\GL_2(\BZ/N_1\BZ).$$
The readers are referred to \cite{KM}. Let $h_0$ be the fixed point of $\msh$ under the action of $K^\times$.
Note that $\BZ+\BZ h_0^{-1}$ is an invertible ideal of $\CO_c$, then the
triple $P=\left(\BC/\BZ+\BZ h_0,\big\langle\frac{1}{N_0}\big\rangle,H\left({\frac{h_0}{N_1}\atop \frac{1}{N_1}}\right)\right)$ is a Heegner point on $X_K(N)$.
By CM theory, $P\in X_K(N)(K^\ab)$. The conductor of $P$ is also defined to be $c$, the conductor of $\CO_c$. For details, see Section 2.

Assume $\phi$ corresponds to an elliptic curve $E/\BQ$,
then there is a modular parametrization $f:X_K(N)\to E$, taking $\infty$ to identity in $E$.
It is unique in the sense that given two parametrizations $f_1,f_2$, there exist integers $n_1,n_2$ such that $n_1f_2=n_2f_2$
\cite[Proposition 3.8]{CST}. Now we can formulate the following Gross-Zagier formula:
\begin{thm}[\cite{CST}]
Under the assumption $(*)$
$$L'(1,E,\chi)=2^{-\mu(N,D)}\cdot\frac{8\pi^2(\phi,\phi)_{\Gamma_0(N)}}{u^2c\sqrt{|D_K|}}\frac{\wh{h}_K(P_\chi(f))}{\deg f}\eqno(\rm{GZ})$$
where $(\ ,\ )_{\Gamma_0(N)}$ is the Petersson inner product,
$\hat{h}_K$ is the N\'{e}ron-Tate height over $K$, $\mu(N,D)$ is the number of prime factors of $(N,D)$, $u=[\CO_c^\times:\{\pm 1\}]$.
\end{thm}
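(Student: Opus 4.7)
The plan is to derive this formula as a specialization of the general Shimura-curve Gross--Zagier formula proved in \cite{CST}. Since the ambient quaternion algebra here is the split one $B=M_2(\BQ)$, the Shimura variety in question is an honest modular curve, and $X_K(N)$ is precisely the Shimura curve attached to the open compact subgroup $U\subset B(\wh{\BZ})^\times$ whose $p$-adic component is $R_p^\times$, where $R=\CO_c+N_1R_0(N_0)$. The condition $K\cap R=\CO_c$ guarantees that the local optimal-embedding data is unambiguous at each place, and the CM point $P$ described above is exactly the CM point attached to the embedding $K\hookrightarrow B$ normalized by the chosen fixed point $h_0$.

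Next, I would identify the modular parametrization $f:X_K(N)\to E$ with the test vector $\phi^\sharp$ in the automorphic representation $\pi$ generated by $\phi$ that is fixed by $U$. By multiplicity one together with the level computation built into the definition of $R$, the parametrization is unique up to the integer ambiguity recorded in \cite[Proposition 3.8]{CST}, and matches the one used to formulate the general formula.

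The main technical content lies in the local calculations. The general formula of \cite{CST} expresses $L'(1,E,\chi)$ as the global N\'{e}ron--Tate height of $P_\chi(f)$ multiplied by a product of local toric integrals $\alpha_v(\phi_v,\chi_v)$ over all places. At the archimedean place and at the unramified primes, these integrals combine to give the factors $8\pi^2(\phi,\phi)_{\Gamma_0(N)}$, $u^{-2}$, $c^{-1}$, and $|D_K|^{-1/2}$. At a split prime $p\mid N_0$ and at an inert prime $p\mid N_1$, a direct computation with the order $R_p$ shows $\alpha_p=1$, since in both cases the local representation is a principal series or an unramified twist and the toric integral reduces to a lattice count. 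The only genuinely non-trivial local factor arises at primes $p\mid(N,D)$, i.e.\ ramified primes with $\ord_p N=1$, where $\pi_p$ is a ramified twist of the Steinberg representation and the assumption $\chi([\fp])\neq a_p$ ensures the non-vanishing of the local toric functional; an explicit matrix calculation with the Eichler structure at $p$ yields $\alpha_p=1/2$ at each such prime, producing the announced factor $2^{-\mu(N,D)}$.

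The main obstacle will be these ramified local computations. One has to choose the local test vector $\phi_p$ so that the local toric functional is non-vanishing, then evaluate the toric period using the new-vector theory for the Steinberg representation. The compatibility of these local choices with the global test vector used to define $f$ must be verified so that the product of local terms matches the explicit constant in the displayed formula; once this is confirmed and assembled with the global height term, the stated equality follows.
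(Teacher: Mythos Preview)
The paper does not give a proof of this theorem at all: it is simply quoted from \cite{CST} (once in the introduction and again as Theorem~\ref{GZ}), and used as a black box throughout. So there is no ``paper's own proof'' to compare your proposal against.

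That said, your outline is a reasonable sketch of how \cite{CST} itself obtains the explicit constant: one specializes the general Gross--Zagier formula of Yuan--Zhang--Zhang on Shimura curves to the split quaternion algebra $B=M_2(\BQ)$, takes the open compact $U=\wh{R}^\times$ with $R=\CO_c+N_1R_0(N_0)$, identifies the CM point with $[h_0,1]$, and then evaluates the local toric periods place by place. Two small corrections to your description: first, primes $p\mid N_0$ are not all split --- by condition~$(*)$ the ramified primes with $\ord_p N=1$ also sit inside $N_0$, so your case split should read ``split or ramified $p\mid N_0$'' versus ``inert $p\mid N_1$''; second, you do not explain where the factor $\deg f$ comes from. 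In \cite{CST} this arises because the height pairing in the general formula is normalized by the Petersson norm of the test vector $f$, and passing from that automorphic normalization to the geometric N\'eron--Tate height of the Heegner cycle on $E$ introduces exactly $\deg f$. These are the two points you would need to tighten if you actually wanted to reproduce the argument of \cite{CST} rather than cite it.
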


In another way, following the idea of Kolyvagin, these Heegner points form an ``Euler system".
There is a norm compatible relation between Heegner points of different conductors. See Theorem \ref{Nr:He}.

As an application of such kind of parametrization,
we will construct a family of quadratic twists of an elliptic curve with Mordell-Weil groups being  rank one.
The action of complex conjugation on the CM-points of modular curve is a crucial point in the proof of the nontriviality
of the Heegner point. For usual modular curve $X_0(N)$, the complex conjugation is essentially the Atkin-Lehner operator.
However, it does not keep for the modular curve $X_K(N)$.
We find the correct one, namely, the combination of local Atkin-Lehner operators? and the nontrivial normalizer of $K^\times$ in $\GL_2(\BQ)$.
Denote this operator by $w$. Then $f+f^w$ is a constant map with its image a nontrivial $2$-torsion point; see
Lemma \ref{key-lemma}. This phenomenon and the norm compatible relation control the divisibility of Heegner cycles.
Together with the Gross-Zagier formula,
the divisibility of Heegner cycles deduces the $2$-part of the BSD conjecture for our family of quadratic twists.

For each square free non-zero integer $d \neq 1$, we write $E^{(d)}$ for the twist of an elliptic curve $E/\BQ$ by the quadratic extension $\BQ(\sqrt{d})/\BQ$. The results of \cite{Waldspurger}, \cite{BFH}, \cite{FH}, \cite{RKM} shows that there are infinitely many $d$ such that $L(E^{(d)}, s)$ is nonvanishing at $s=1$, and infinitely many $d$ such that $L(E^{(d)},s)$ has a simple zero at $s=1$.

The work of \cite{Tian}, \cite{Tian2} for the elliptic curve $(X_0(32),[\infty]): y^2 = x^3 - x$ constructs explicitly families of $d$
with $\ord_{s=1}L(E^{(d)},s) = 1$.
The work of \cite{CLTZ} deals with the elliptic curve $E=(X_0(49),[\infty])$ which has CM by $\sqrt{-7}$,
gets the similar results to \cite{Tian}.

Here, we construct a family of quadratic twists of $E=(X_0(36),[\infty])$ such that the ranks of Mordell-Weil groups for
this twists are one.

\begin{thm}\label{th:one}
Let $\ell$ be a prime such that $3$ is split in $\BQ(\sqrt{-\ell})$ and $2$ is unramified in $\BQ(\sqrt{-\ell})$.
Let $M=q_1\cdots q_r$ be a positive square-free integer with prime factors $q_i$ all inert in $\BQ(\sqrt{-3})$, $q_i\equiv 1\pmod 4$
and $q_i$ inert in $\BQ(\sqrt{-\ell})$. Then
\begin{enumerate}
  \item $\ord_{s=1}L(s,E^{(-\ell M)})=1=\rank E^{(-\ell M)}(\BQ)$;
  \item $\#\Sha(E^{(-\ell M)}/\BQ)$ is odd, and the $p$-part of the full BSD conjecture of $E^{(-\ell M)}$ holds for $p\nmid 3\ell M$.
\end{enumerate}
\end{thm}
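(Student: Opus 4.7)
Let $K=\BQ(\sqrt{-\ell})$; the hypothesis that $2$ is unramified in $K$ forces $\ell\equiv 3\pmod 4$, so $\disc K=-\ell$. By assumption $3$ splits in $K$ and each $q_i$ is inert in $K$. Take $c=M$ and let $\chi$ be the quadratic genus character of $\Pic(\CO_M)$ cut out by the subfield $K(\sqrt{M})\subset H_M$. Both $M$ and $-\ell M$ are fundamental discriminants (since $M\equiv -\ell M\equiv 1\pmod 4$), and genus theory gives
\begin{equation*}
L(s,E,\chi)\;=\;L\bigl(s,E^{(-\ell M)}\bigr)\cdot L\bigl(s,E^{(M)}\bigr).
\end{equation*}
Hypothesis $(*)$ holds for $(E,K,\chi)$: $(c,N)=(M,36)=1$ since no $q_i$ divides $6$; at $p=3$ the prime is split (no constraint); and at $p=2$ either $2$ splits in $K$ (no constraint) or $2$ is inert, in which case $\ord_2 N=2$ is even.

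\textbf{Gross--Zagier and reduction to non-triviality.} The explicit Gross--Zagier formula produces a modular parametrization $f:X_K(36)\to E$ and a Heegner point $P_\chi(f)\in E(H_M)\otimes\BC$ with
\begin{equation*}
L'(1,E,\chi)\;=\;c_{E,K}\cdot \hat h_K\bigl(P_\chi(f)\bigr)/\deg f,\qquad c_{E,K}>0.
\end{equation*}
The standard root-number transformation $\epsilon(E^{(d)})=\chi_d(-N)\epsilon(E)$, combined with $\chi_{-\ell M}(-1)=-1$ and $\chi_M(-1)=+1$, gives $\epsilon(E^{(-\ell M)})=-1$ and $\epsilon(E^{(M)})=+1$. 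In particular $L(1,E^{(-\ell M)})=0$, so the factorization collapses to $L'(1,E,\chi)=L(1,E^{(M)})\cdot L'(1,E^{(-\ell M)})$. A parallel Waldspurger-type toric-period argument on the rank-zero side yields $L(1,E^{(M)})\ne 0$, so assertion (1) reduces to showing that $P_\chi(f)$ has infinite order.

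\textbf{Non-triviality of $P_\chi(f)$ --- the main obstacle.} This is the central difficulty. By Lemma \ref{key-lemma} the operator $w$, formed as the product of the local Atkin--Lehner involutions at the primes of $N$ with the nontrivial normalizer of $K^\times$ inside $\GL_2(\BQ)$, satisfies $f+f^w=t$ for some nontrivial $2$-torsion point $t\in E(\BQ)$. Combined with the norm compatibility of Heegner points along the ring-class tower $K\subset H_{M/q_i}\subset H_M$ (Theorem \ref{Nr:He}), an induction on the number of prime factors of $M$ in the spirit of \cite{Tian,CLTZ} shows that the image of $P_\chi(f)$ in $E^{(-\ell M)}(\BQ)/2E^{(-\ell M)}(\BQ)$ is nonzero. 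Hence $P_\chi(f)$ is non-torsion, $L'(1,E^{(-\ell M)})\ne 0$, and $\rank E^{(-\ell M)}(\BQ)\ge 1$.

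\textbf{Euler system, oddness of $\Sha$, and $p$-part of BSD.} Feeding the family $\{P_{\chi,n}\}$ of Heegner points of conductors $Mn$ (for square-free $n$ coprime to $6\ell M$) into Kolyvagin's machinery, the mod-$2$ non-triviality above produces enough non-trivial Kolyvagin classes to bound $\Sel_2(E^{(-\ell M)}/\BQ)$ from above. This proves $\rank E^{(-\ell M)}(\BQ)=1$ and that $\Sha(E^{(-\ell M)}/\BQ)[2^\infty]$ is trivial, i.e., $\#\Sha$ is odd; together with the explicit Heegner-index formula compared against the Gross--Zagier formula this yields the $2$-part of BSD. Finally, for any odd prime $p\nmid 3\ell M$, the twist $E^{(-\ell M)}$ retains CM by $\BQ(\sqrt{-3})$ and has good reduction at $p$, so the $p$-part of BSD follows from Rubin's proof of the Iwasawa main conjecture for CM elliptic curves.
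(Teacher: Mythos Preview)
Your overall architecture is right, but the $2$-adic core of the argument is both misstated and underpowered, and this is exactly where the paper does its real work.

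\textbf{The Heegner point is highly $2$-divisible.} You assert that the image of $P_\chi(f)$ in $E^{(-\ell M)}(\BQ)/2E^{(-\ell M)}(\BQ)$ is nonzero. In fact the paper shows the opposite phenomenon: writing $y_M=P_{\chi_M}(f)$, the norm relation (Theorem~\ref{Nr:He}) together with $a_{q_i}=0$ and the vanishing of the non-$\chi_M$ genus-character periods gives $y_M=2^r y_0$ with $y_0=\sum_{\sigma\in 2\msa}f(P_M)^\sigma$. An inflation--restriction argument then yields $y_M=2^{r-1}z+t$ with $z\in A^{(-\ell M)}(\BQ)$ and $t\in A(\BQ)[2]$. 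The crucial input preventing \emph{further} $2$-divisibility is Lemma~\ref{key-lemma} in the sharpened form $\ov{y}_0+y_0=T$ (the nontrivial $2$-torsion point), which forces the $2$-index of $y_M$ to be exactly $r-1$. There is no induction on $r$ in the style of \cite{Tian,CLTZ}; the paper explicitly avoids that route.

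\textbf{Kolyvagin does not give $\Sha[2^\infty]=0$ here.} Because $y_M$ has $2$-index $r-1$, Kolyvagin's bound on $\Sha[2^\infty]$ would only yield divisibility by a power of $2$ growing with $r$, not triviality. The paper instead proves $\Sha(A^{(-\ell M)}/\BQ)[2]=0$ by a direct classical $2$-descent. The $2$-part of BSD is then verified numerically: the exact index computation gives $\wh{h}_K(y_M)=2^{2r-1}R(-\ell M)$, hence via Gross--Zagier and the rank-zero Waldspurger result $v_2(L^{'\alg}(1,A^{(-\ell M)}))=r$, which is matched against the Tamagawa product $\prod_{p\mid 6\ell M}c_p$ and $|A^{(-\ell M)}(\BQ)_\tor|^2$.

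\textbf{Minor citation issue.} For odd $p\nmid 6\ell M$ in the rank-one case the paper invokes Perrin-Riou \cite{PR} ($p$-adic Gross--Zagier plus the CM main conjecture), not Rubin \cite{Ru}, which is the rank-zero input.
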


The nontriviality of Heegner cycles and Gross-Zagier formula also implies the rank part of BSD conjecture, for $E^{(M)}$, namely,
$$\ord_{s=1}L(s,E^{(M)})=0=\rank E^{(M)}(\BQ).$$
However, the proof of the $2$-part of the BSD conjecture for $E^{(-\ell M)}$ needs that for $E^{(M)}$.

A new feature of this paper is that we give a parallel proof of the BSD conjecture for $E^{(M)}$, that is, using
the Waldspurger formula and the norm property of Gross points. For the induction method using in
\cite{CLTZ}, \cite{TYZ}, there is an embedding problem of imaginary quadratic fields to quaternion algebras
which relating with the problem of representation  integers by ternary quadratic forms (See also
the argument before \cite[Definition 5.5]{CLTZ}, \cite[Section 2.1]{TYZ} and \cite{Q}). The use of the norm property
of Gross points avoids this emebdding problem.



If the data $(\phi,K,\chi)$
satisfies that the root number $\epsilon(\phi,\chi)=+1$, by \cite{CST} we can choose an appropriate
definite quaternion algebra $B$ over $\BQ$
containing $K$, an order $R$ of $B$ of discriminant $N$ with $R\cap K = \CO_K$
and a ``unique'' function $f:B^\times\bs\wh{B}^\times/\wh{R}^\times\to \BC$.
Assume the conductor $c$ of $\chi$ satisfies $(c,N)=1$. Let $x_c \in K^\times \bs \wh{B}^\times / \wh{R}^\times$ be a Gross point
of conductor $c$, that is $x_c \wh{R} x_c^{-1} \cap \wh{K} = \wh{\CO}_c$. Denote by
\[P_\chi(f) = \sum_{\sigma \in \Gal(H_c/K)} f(\sigma.x_c)\chi(\sigma).\]
Then with similar notations as for Gross-Zagier formula, we have the Waldspurger formula (See Theorem \ref{cst-wald})
$$L(1, E, \chi)=2^{-\mu(N, D)} \cdot \frac{8\pi^2 (\phi, \phi)_{\Gamma_0(N)}}{u^2 \sqrt{|Dc^2|}} \cdot \frac{|P_\chi(f)|^2}{\pair{f,f}},$$
Moreover, the Gross points of different conductors also form an ``Euler system''(See Section 2).
The following theorem can be viewed as the rank zero version of Theorem \ref{th:one}:
\begin{thm}\label{th:zero}
Let $M=q_1\cdots q_r$ be a positive square-free integer with prime factors $q_i$ all inert in $\BQ(\sqrt{-3})$ and $q_i\equiv 1\pmod 4$, then
\begin{enumerate}
  \item $\ord_{s=1}L(s,E^{(M)})=0=\rank E^{(M)}(\BQ)$;
  \item $\#\Sha\left(E^{(M)}/\BQ\right)$ is odd, and the $p$-part of the full BSD conjecture of $E^{(-\ell M)}$ holds for $p\not= 3$.
\end{enumerate}
\end{thm}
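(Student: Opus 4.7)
The strategy parallels the proof of Theorem \ref{th:one} but replaces Gross--Zagier by the Waldspurger formula on the definite side, and replaces the rank-one Euler system argument by its rank-zero analog. The first step is to choose an imaginary quadratic field $K$ together with a ring class character $\chi$ of conductor dividing $M$ so that the generalized Heegner condition $(*)$ holds with root number $\epsilon(E,\chi)=+1$, and so that the Rankin--Selberg $L$-function factors as $L(s,E,\chi)=L(s,E^{(M)})\cdot L(s,E^{(d)})$ for an auxiliary twist whose central value is under control. The hypotheses on $M$ (primes $q_i$ inert in $\BQ(\sqrt{-3})$ and $q_i\equiv 1\pmod 4$) are arranged precisely so that the local signs at each $q_i$ multiply to $+1$ and so that $(c,N)=1$ with $N=36$. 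By \cite{CST} one then obtains a definite quaternion algebra $B$ over $\BQ$ containing $K$, an order $R\subset B$ with $R\cap K=\CO_K$, a distinguished test vector $f:B^\times\bs\wh{B}^\times/\wh{R}^\times\to\BC$, and the Waldspurger formula stated in the introduction relating $L(1,E,\chi)$ to $|P_\chi(f)|^2$.

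The second step is to prove by induction on $r$ that $P_\chi(f)\neq 0$ and, more precisely, to compute $\ord_2 P_\chi(f)$. The base case $r=0$ reduces to $L(1,E)\neq 0$, a direct computation for $E=X_0(36)$. For the inductive step, use the norm compatibility of Gross points along the ring class tower (the definite-side analog of Theorem \ref{Nr:He}): at the inert prime $q_r$, the norm of a Gross point of conductor $cM$ down to conductor $c(M/q_r)$ equals $a_{q_r}(E)$ times the Gross point of smaller conductor, up to a unit contribution from the Frobenius at $q_r$. Combining this with the congruence $q_r\equiv 1\pmod 4$ (which pins down $a_{q_r}\pmod 2$ via the $2$-torsion structure of $E$) and the definite-side analog of Lemma \ref{key-lemma} (the normalizer $w$ of $K^\times$ in $B^\times$ acts on $f$ so that $f+f^w$ is a $2$-torsion-valued constant) yields the precise $2$-adic divisibility needed for the induction to close.

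From nontriviality of $P_\chi(f)$, the Waldspurger formula yields $L(1,E^{(M)})\neq 0$, and then the theorem of Kolyvagin--Gross--Zagier (or, equivalently, the Euler system of Gross points from Section 2) produces $\rank E^{(M)}(\BQ)=0$ and finiteness of $\Sha(E^{(M)}/\BQ)$, proving part (1). For part (2), the precise $2$-adic valuation of $P_\chi(f)$ from the previous paragraph, matched against the Waldspurger formula, identifies $\ord_2\bigl(L(1,E^{(M)})/\Omega\bigr)$ with the value predicted by BSD; together with a standard $2$-descent this shows $\#\Sha(E^{(M)}/\BQ)$ is odd. For odd $p\neq 3$, Kolyvagin's bound on the $p$-Selmer group via the Euler system yields the $p$-part of BSD. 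The prime $3$ is excluded because $E$ admits a rational $3$-isogeny which obstructs the Kolyvagin argument at $p=3$.

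The main obstacle lies in the inductive step of the second paragraph: making the $2$-adic accounting \emph{exact} rather than merely an inequality, so that the bound matches BSD on the nose. Earlier induction methods (\cite{CLTZ}, \cite{TYZ}) vary the imaginary quadratic field and so encounter the embedding problem of $K$ into a varying quaternion algebra, whose solvability is governed by representation of integers by ternary quadratic forms. The present approach fixes both $K$ and $B$ and varies only the conductor of $\chi$ along the ring class tower, so the embedding problem disappears and is replaced by a purely local norm calculation. The new ingredient to be established is the definite-side analog of Lemma \ref{key-lemma}: identifying the correct normalizer $w\in N_{B^\times}(K^\times)\setminus K^\times$ and verifying that $f+f^w$ takes values in the nonzero $2$-torsion. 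This local-global identification of $w$, together with precise control of its action on the chosen test vector, is the real heart of the proof.
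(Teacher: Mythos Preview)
Your broad strategy --- Waldspurger on the definite quaternion algebra together with the norm relations among Gross points --- matches the paper's. However, the mechanism you propose for the exact $2$-adic valuation is not the one the paper uses, and your proposed input for the $p$-part with $p\nmid 6$ would only give an inequality, not BSD.

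First, there is no ``definite-side analog of Lemma \ref{key-lemma}'' in the paper, and none is needed. On the definite side the target of $f$ is $\BZ$, not an abelian variety, so the statement ``$f+f^w$ is a nonzero $2$-torsion constant'' has no direct meaning. What the paper actually does is: (i) take $K=\BQ(\sqrt{-3})$, so that for every $q\mid M$ (inert in $K$) one has $a_q=0$ \emph{exactly}, not merely $a_q\equiv 0\pmod 2$; (ii) use the norm relation (Theorem \ref{Nr:Gr}) to show $P_\chi(f_M)=0$ for every quadratic character $\chi\neq\chi_M$ of $\Gal(H_M/K)$, whence the character-sum identity $P_{\chi_M}(f_M)=2^r y_0$ with $y_0=\sum_{\sigma\in 2\msa}f_M(\sigma)$ --- this is a single identity, not an induction on $r$; and (iii) compute the test vector explicitly (Proposition \ref{basis}) to see that $f_M$ restricted to $\wh{B}^{\times 2}$ takes only the values $\pm 1$, so $y_0\equiv [H_M:H_M^0]\pmod 2$, which is odd because $q\equiv 1\pmod 4$ makes each $(q+1)/2$ odd. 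The hypothesis $q_i\equiv 1\pmod 4$ enters here, not through $a_{q_i}\pmod 2$.

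Second, for odd $p\neq 3$ you invoke Kolyvagin, but Kolyvagin's Euler system gives only an upper bound on $\#\Sha[p^\infty]$, not the equality required by BSD. The paper instead appeals to Rubin's proof of the Iwasawa main conjecture for CM elliptic curves \cite{Ru}, which yields the full $p$-part of BSD for all $p\nmid 6$ once $L(1,E^{(M)})\neq 0$. Similarly, rank zero and finiteness of $\Sha$ follow from Rubin (or Coates--Wiles), not from a Heegner-point Euler system.
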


\s{\bf Acknowledgements.} The authors greatly thank Professor Ye Tian for suggesting this problem and his persistent encouragement.

\section{\bf The Modular Curve and Heegner points}

\subsection{The Modular Curve $X_K(N)$}\label{section-X_K(N)}
Let $K$ be an imaginary quadratic field with discriminant $D$. Let $N = N_0N_1^2$ be a positive integer
such that $p|N_1$ if and only if $p$ is inert in $K$. Let $c$ be another positive integer coprime to $N$.
Take an embedding $K\hookrightarrow M_2(\BQ)$ which is admissible in the sense that
$$K\cap M_2(\BZ)=K\cap R_0(N_0)=\CO_c.$$
Denote by $R=\CO_c+N_1R_0(N_0)$  an order of $M_2(\BQ)$. Then $R$ has discriminant $N$ with $R\cap K = \CO_c$.

Let $\Gamma_K(N) = R^\times \cap \SL_2(\BZ)$ and $X_K(N)$ be the modular curve over $\BQ$ with level $\Gamma_K(N)$.
It's well known that $X(N_0N_1)(\BC)$ parameterizes  $(E,(\BZ/N_0N_1)^2\simeq E[N_0N_1]))$
where $E$ is an elliptic curve over $\BC$. By \cite{KM}, it parameterizes $(E,(\BZ/N_0)^2\simeq E[N_0],(\BZ/N_1)^2\simeq E[N_1]))$.
Then $X_K(N)$ parameterizes $(E,C,\alpha:(\BZ/N_1)^2\simeq E[N_1]))$,
where $C$ is a cyclic subgroup of $E[N_0]$ of order $N_0$,
and $\alpha$ is an $H$-orbit of a basis of $E[N_1])$ where $H := (\CO_K/N_1\CO_K)^\times \subset \GL_2(\BZ/N_1\BZ)$.
Precisely, the class of $z\in\msh$ in $X_K(N)$ corresponds to the triple
$$\left(\BC/\BZ\cdot z+\BZ,\Big\langle\frac{1}{N_0}\Big\rangle,H\left({\frac{z}{N_1}\atop \frac{1}{N_1}}\right)\right).$$

\begin{lem}\label{lem:He}
If $m$ is a positive integer and $(m,cND_K)=1$, then for any invertible fractional ideals $\fa,\CN$ of $\CO_{cm}$,
satisfying $\CN^{-1}\fa/\fa\simeq \BZ/N_0\BZ$,  there exist a $\BZ$-basis $\{u,v\}$ of $\fa$ and $g \in\GL_2^+(\BQ)$,
such that $\CN^{-1}\fa=\BZ\dfrac{u}{N_0}+\BZ v$, $\dfrac{v}{u}=g^{-1}h_0$, and $K\cap gRg^{-1}= \CO_{cm}$.
\end{lem}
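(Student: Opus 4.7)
The plan is to build $(u,v,g)$ in two moves: first pick a $\BZ$-basis of $\fa$ which already handles the primes dividing $N_0$, then twist by an element of $\SL_2(\BZ)$ to repair the behaviour at primes dividing $N_1$; once the basis is right, $g$ is produced by Noether--Skolem. For the initial basis, Smith normal form applied to $\fa\subset\CN^{-1}\fa$ (with cyclic quotient of order $N_0$) yields $\{u_0,v_0\}\subset\fa$ with $\CN^{-1}\fa=\BZ\,u_0/N_0+\BZ v_0$; replacing $v_0$ by $-v_0$ if necessary, $\tau_0:=v_0/u_0\in\msh$. Let $\iota_0\colon K\hookrightarrow M_2(\BQ)$ be defined by $\alpha\cdot(u_0\ v_0)=(u_0\ v_0)\iota_0(\alpha)$, so $\iota_0(K)\cap M_2(\BZ)=\iota_0(\CO_{cm})$ and $\tau_0$ is fixed by $\iota_0(K^\times)$. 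For every $\alpha\in\CO_{cm}$ the inclusion $\alpha\cdot(u_0/N_0)\in\CN^{-1}\fa$ gives $\alpha u_0\in\BZ u_0+N_0\BZ v_0$, i.e., the $(2,1)$-entry of $\iota_0(\alpha)$ lies in $N_0\BZ$, so $\iota_0(\CO_{cm,p})\subset R_0(N_0)_p=R_p$ at every $p\mid N_0$.

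At a prime $p\mid N_1$, $p$ is inert in $K$ and, since $p\nmid cm$, $\CO_{cm,p}=\CO_{K,p}$ is the unramified quadratic extension of $\BZ_p$. Both $\iota_0$ and $\iota$ restrict to optimal embeddings $\CO_{K,p}\hookrightarrow M_2(\BZ_p)$. Because the local norm $\CO_{K,p}^\times\twoheadrightarrow\BZ_p^\times$ is surjective, $\SL_2(\BZ_p)$ acts transitively on the set of such embeddings, so there is $\gamma_p\in\SL_2(\BZ_p)$ with $\gamma_p\iota_0(\alpha)\gamma_p^{-1}=\iota(\alpha)$ for every $\alpha\in\CO_{K,p}$. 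Using surjectivity of $\SL_2(\BZ)\twoheadrightarrow\SL_2(\BZ/N\BZ)$ together with CRT, choose $\gamma\in\SL_2(\BZ)$ with $\gamma\equiv I\pmod{N_0}$ and $\gamma\equiv\gamma_p\pmod{p^{\ord_p N_1}}$ for each $p\mid N_1$. Set $(u\ v):=(u_0\ v_0)\gamma^{-1}$; since $\gamma\equiv I\pmod{N_0}$, the identity $\CN^{-1}\fa=\BZ\,u/N_0+\BZ v$ is preserved, while $\iota_\fa:=\gamma\iota_0\gamma^{-1}$ now satisfies $\iota_\fa(\CO_{cm,p})\subset R_p$ at every $p\mid N$, hence $\iota_\fa(\CO_{cm})\subset R$.

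Finally let $\tau=v/u\in\msh$. Both $\iota$ and $\iota_\fa$ embed $K$ into $M_2(\BQ)$ with CM point in $\msh$, hence induce the same orientation on $\BR^2$ and are $\GL_2^+(\BQ)$-conjugate by Noether--Skolem; pick $g\in\GL_2^+(\BQ)$ with $g\iota_\fa(\alpha)g^{-1}=\iota(\alpha)$ for all $\alpha\in K$. Then $g\tau$ is a fixed point of $\iota(K^\times)$ on $\BP^1(\CC)$ lying in $\msh$ (as $\det g>0$), forcing $g\tau=h_0$, i.e., $v/u=g^{-1}h_0$. At the level of rings,
\begin{equation*}
K\cap gRg^{-1}=g\bigl(\iota_\fa(K)\cap R\bigr)g^{-1}=g\,\iota_\fa(\CO_{cm})\,g^{-1}=\CO_{cm},
\end{equation*}
where the middle equality combines $\iota_\fa(\CO_{cm})\subset R$ (just established) with $\iota_\fa(K)\cap M_2(\BZ)=\iota_\fa(\CO_{cm})$.

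The main technical input is the $\SL_2(\BZ_p)$-transitivity on optimal embeddings of $\CO_{K,p}$, which boils down to surjectivity of the local norm from an unramified quadratic extension; this is precisely why only inert primes (those dividing $N_1$) need correction, while the primes dividing $N_0$ are handled automatically by the compatibility of the basis with $\CN$. Packaging the local $\gamma_p$'s into a single $\gamma\in\SL_2(\BZ)$ that is also congruent to $I$ mod $N_0$ is then a routine CRT argument.
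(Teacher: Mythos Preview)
Your strategy matches the paper's: first choose a basis of $\fa$ adapted to $\CN$ so that the primes dividing $N_0$ are automatically handled, then correct the embedding at the inert primes $p\mid N_1$ by a congruence condition, and globalize. The paper packages the first step as ``the pair $(\fa,\CN)$ is a Heegner point on $X_0(N_0)$'' and the second as strong approximation for $\Gamma_0(N_0)$, while you unpack both steps via Smith normal form and $\SL_2(\BZ_p)$--transitivity on optimal embeddings of $\CO_{K,p}$; these are the same arguments in slightly different clothing, and your observation that local norm surjectivity is what makes the $N_1$-primes work is exactly the point.

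There is, however, a genuine glitch in your conventions. With the row-vector definition $\alpha\,(u_0\ v_0)=(u_0\ v_0)\,\iota_0(\alpha)$, the fixed points of $\iota_0(K^\times)$ on $\BP^1(\BC)$ under the standard action $z\mapsto(az+b)/(cz+d)$ are $-\tau_0$ and $-\bar\tau_0$, \emph{not} $\tau_0=v_0/u_0$. (Check: from $\alpha=a+c\tau_0$ one gets $a-d=-c\,\Tr(\tau_0)$ and $b=-c\,\Nr(\tau_0)$, so the fixed-point equation $cz^2+(d-a)z-b=0$ becomes $z^2+\Tr(\tau_0)z+\Nr(\tau_0)=0$.) This error propagates: after twisting by $\gamma$ and applying Noether--Skolem you obtain $-\,\overline{v/u}=g^{-1}h_0$ rather than $v/u=g^{-1}h_0$. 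The fix is painless: define $\iota_0$ by $\alpha\begin{pmatrix}v_0\\u_0\end{pmatrix}=\iota_0(\alpha)\begin{pmatrix}v_0\\u_0\end{pmatrix}$ (column vector, with $v_0$ on top); then $\tau_0$ really is the fixed point and the rest of your argument goes through verbatim. A related minor point: your claim that $\iota$ and $\iota_\fa$ are $\GL_2^+(\BQ)$-conjugate because ``both have CM point in $\msh$'' is not quite an argument, since every embedding has one fixed point in $\msh$; but this is harmless, as replacing $\iota_\fa$ by $\iota_\fa\circ\overline{\phantom{x}}$ if necessary costs nothing (it preserves $\iota_\fa(\CO_{cm})$ and the fixed-point set).
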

\begin{proof}
Consider the curve morphism $X_K(N)\to X_0(N_0)$ over $\BQ$, which is the forgetful functor in the moduli aspect $(E, C, [\alpha])\mapsto(E,C)$. $\fa,\CN$ defines a Heegner point $(\BC/\fa,\CN^{-1}\fa/\fa)$ on $X_0(N_0)$. Then exists a $\BZ$-basis $\{u,v\}$ of $\fa$ and $g=\mat(a,b,c,d)\in\GL_2^+(\BQ)$, $\CN^{-1}\fa=\BZ\pair{\dfrac{u}{N_0},v}$,  $\dfrac{v}{u}=g^{-1}h_0$, and $K\cap g\wh{R_0(N_0)}g^{-1}= \CO_{cm}$. Different choice of basis $(u,v)$ make $g$ differ an element in $\Gamma_0(N_0)=R_0(N_0)^\times$ and a scalar. So we have to prove there exists $g'\in \Gamma_0(N_0)$, such that $K\cap gg'\wh{R_0(N_0)}g^{'-1}g^{-1}= \CO_{cm}$.

In fact, we choose $g'\in \Gamma_0(N_0)$, such that $gg'\in \prod_{\ell|N_1}K_\ell^\times(1+N_1 M_2(\BZ_\ell))$, embedding $gg'$ into $\prod_{\ell|N_1}\GL_2(\BQ_\ell)$. Note that if $g^{-1}=\mat(a',b',c',d')$, and multiply a scalar if necessary, we may assume $g^{-1}\in M_2(\BZ)$, then $\BZ(a'h_0+b')+\BZ(c'h_0+d)$ and $\fa$ belong to a same class in $\Pic(\CO_{cm})$, hence
$$(\det g^{-1})m^{-1}=[\CO_{cm}:\fa]:=\dfrac{[\CO_{cm}:\CO_{cm}\cap\fa]}{[\fa:\CO_{cm}\cap\fa]};$$
Therefore there exists an integer $N_1'$ whose prime factors are prime factors of $N_1$, s.t. $\ell\nmid\det(N_1'g^{-1}),\forall \ell|N_1$. So $N_1'g^{-1}\in\GL_2(\BZ_\ell),\forall\ell|N_1$. By strong approximation, $\Gamma_0(N_0)\prod_{\ell|N_1}\CO_{K,\ell}^\times(1+N_1 M_2(\BZ_\ell))=\prod_{p|N_1}\GL_2(\BZ_\ell)$, then there exists $g'\in \Gamma_0(N_0), X\in\prod_{\ell|N_1}\CO_{K,\ell}^\times(1+N_1 M_2(\BZ_\ell))$, such that $g' X=N_1'g^{-1}$, thus $gg'=N_1'X\in\prod_{\ell|N_1}K_\ell^\times(1+N_1 M_2(\BZ_\ell))$. The proof is completed
\end{proof}

\begin{defn}
Let $\fa,\CN$ and $u,v,g$ be as in the above lemma.
A Heegner point on $X_K(N)$ of conductor $cm$ is a triple
$$\displaystyle{P=\left(\BC/\fa,\CN^{-1}\fa/\fa,H\left({\frac{v}{N_1}\atop\frac{u}{N_1}}\right)\right)}.$$
\end{defn}
\begin{remark}
  This point corresponds to the point $\dfrac{v}{u}\in\msh$.
\end{remark}

Let $h_0$ be the point $\msh^{K^\times}$. The order $\CO_c = \BZ + \BZ\varpi_c$ where $\varpi_c = \frac{cD+c\sqrt{D}}{2}$.
Denote by $\mat(x,y,z,w) \in \GL_2(\BQ)$ the image of $\varpi_c$ under the fixed embedding $K \hookrightarrow M_2(\BQ)$.
Since $K$ is a field, $z \not=0$.
\begin{lem}
$K=\BQ+\BQ h_0$ and $\CO_c=\BZ+\BZ yh_0^{-1}$.
\end{lem}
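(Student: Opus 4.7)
The plan is to identify $K$ with its image in $\BC$ via the eigenvalue embedding attached to $h_0$, after which both equalities become a one-line matrix computation. First I would substitute the image $\mat(x,y,z,w)$ of $\varpi_c$ into the fixed-point relation $\frac{xh_0+y}{zh_0+w}=h_0$ to obtain the quadratic
$$zh_0^2+(w-x)h_0-y=0$$
over $\BQ$. Because $h_0\in\msh$ is non-real, this polynomial is irreducible, so $\BQ+\BQ h_0=\BQ[h_0]$ is an imaginary quadratic field.

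Next I would use that $v=\binom{h_0}{1}\in\BC^2$ is a simultaneous eigenvector for the image of $K$ in $M_2(\BC)$: for $\alpha=\mat(a,b,c,d)\in K$, the fixed-point relation $\alpha.h_0=h_0$ is equivalent to $\alpha v=(ch_0+d)v$. Consequently $\alpha\mapsto(\text{eigenvalue of }\alpha\text{ on }v)$ is an injective $\BQ$-algebra map $K\hookrightarrow\BC$, which I would use to identify $K$ with a subfield of $\BC$. Under this identification $\varpi_c$ becomes $zh_0+w$, and since $w\in\BQ$ and $z\in\BQ^\times$ this already yields
$$K=\BQ+\BQ\varpi_c=\BQ+\BQ(zh_0+w)=\BQ+\BQ h_0.$$

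For the second equality I would invoke the admissibility condition $K\cap M_2(\BZ)=\CO_c$, which forces $x,y,z,w\in\BZ$ because $\varpi_c\in\CO_c$. Dividing the displayed quadratic by the nonzero complex number $h_0$ gives
$$zh_0+(w-x)=yh_0^{-1},$$
that is, $\varpi_c-x=yh_0^{-1}$ in $\BC$; as $x\in\BZ$ it follows that
$$\CO_c=\BZ+\BZ\varpi_c=\BZ+\BZ(\varpi_c-x)=\BZ+\BZ\,yh_0^{-1}.$$

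The only conceptual point — and the one on which the whole proof hinges — is fixing the convention that identifies $K$ with a subfield of $\BC$, so that the symbol $h_0$ may be regarded simultaneously as a point of the upper half-plane and as an element of $K$. Once that is nailed down via the eigenvector $\binom{h_0}{1}$, everything reduces to the trivial observation that dividing the defining $\BQ$-quadratic of $h_0$ by $h_0$ expresses $yh_0^{-1}$ as an integer translate of $\varpi_c$.
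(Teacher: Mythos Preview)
Your proof is correct and follows the same line as the paper's: both extract the quadratic $zh_0^{2}+(w-x)h_0-y=0$ from the fixed-point condition and use it, together with the identification $\varpi_c\leftrightarrow zh_0+w$, to read off the two equalities. The only cosmetic difference is that the paper solves the quadratic explicitly via the trace and norm of $\varpi_c$, writing $h_0=\dfrac{(x-w)+c\sqrt{D}}{2z}$ and then computing $yh_0^{-1}$ directly, whereas you package the identification through the eigenvector $\binom{h_0}{1}$ and simply divide the quadratic by $h_0$ to obtain $yh_0^{-1}=\varpi_c-x$ without ever writing $h_0$ in closed form.
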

\begin{proof}
We have $x+w=Dc,xw-yz=\dfrac{c^2(D^2-D)}{4}$. As $h_0$ is fixed by $\mat(x,y,z,w)$,
\[\dfrac{xh_0+y}{zh_0+w}=h_0 \text{ and } zh_0^2+(w-x)h_0-y=0.\]

Hence
$$h_0=\dfrac{(x-w)+c\sqrt{D}}{2z}\in K\bs\BQ\text{ and }h_0^{-1}=\dfrac{2z}{(x-w)+c\sqrt{D}}=\dfrac{(x-w)-c\sqrt{D}}{2y},$$
so
$$yh_0^{-1}=-w+\dfrac{Dc+c\sqrt{D}}{2}.$$
\end{proof}

\begin{lem}
Let $\fa=\BZ+\BZ\cdot h_0^{-1}$ and $\CN^{-1}=\BZ+\BZ\cdot N_0^{-1}h_0^{-1}$.
Then $\End(\fa)=\{x\in K:x\fa\subset\fa\}=\CO_c$, and $\End(\CN^{-1})=\CO_c$.
\end{lem}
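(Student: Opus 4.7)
The plan is to apply the standard description of the endomorphism ring of a rank-two lattice $L=\BZ+\BZ\tau$ in $K$: if $\tau\in K\setminus\BQ$ has primitive integral minimal polynomial $aX^2+bX+c=0$ (with $a>0$ and $\gcd(a,b,c)=1$), then $\End(L)=\BZ+\BZ\cdot a\tau$. For $\tau=h_0^{-1}$, dividing the fixed-point relation $zh_0^2+(w-x)h_0-y=0$ of the preceding lemma by $h_0^2$ gives $y\tau^2+(x-w)\tau-z=0$, whose coefficients are coprime: a common prime divisor $d$ of $y$, $z$, $x-w$ would make $\tfrac{1}{d}\varpi_c$ lie in $K\cap M_2(\BZ)$ strictly larger than $\CO_c$, contradicting admissibility. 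The formula then yields $\End(\fa)=\BZ+\BZ\cdot yh_0^{-1}$, which is exactly $\CO_c$ by the preceding lemma.

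For $\CN^{-1}$, take $\tau'=h_0^{-1}/N_0$; it satisfies $yN_0^2(\tau')^2+(x-w)N_0\tau'-z=0$. The other admissibility condition $K\cap R_0(N_0)=\CO_c$ forces $\varpi_c\in R_0(N_0)$, hence $N_0\mid z$. Writing $z=N_0z_1$, the content of this polynomial is $N_0\cdot g$ with $g=\gcd(yN_0,x-w,z_1)$, so the formula gives $\End(\CN^{-1})=\BZ+\BZ\cdot(y/g)h_0^{-1}$; the lemma therefore reduces to $g=1$.

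The main obstacle is this gcd claim. For a prime $p\nmid N_0$, $p\mid g$ would give $p\mid y$, $p\mid z_1N_0=z$, and $p\mid(x-w)$, contradicting the primitivity above. For $p\mid N_0$, the Heegner condition $(*)$ ensures $p$ is split or ramified in $K$ (with $\ord_p(N_0)=1$ in the ramified case), and I invoke the discriminant identity $(x-w)^2+4yz=c^2D$ arising from $\tr(\varpi_c)$ and $\Nr(\varpi_c)$. Assuming $p\mid(x-w)$ and $v_p(z)\ge v_p(N_0)+1$ yields $v_p((x-w)^2)\ge 2$ and $v_p(4yz)\ge v_p(4)+v_p(N_0)+1\ge 2$, while $v_p(c^2D)=v_p(D)$ equals $0$ when $p$ splits (using $D\equiv 1\pmod 8$ for $p=2$ split), $1$ when $p$ is odd and ramified, or $2$ or $3$ when $p=2$ is ramified. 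The split and odd-ramified cases give immediate contradictions. For $p=2$ ramified one works modulo $8$: when $v_2(D)=2$ the identity forces $yz\equiv 2\pmod 8$, contradicting $v_2(z)\ge 2$; when $v_2(D)=3$ the fact that $v_2((x-w)^2)$ is even and $v_2(4yz)\ge 4$ prevents $v_2((x-w)^2+4yz)$ from equalling $3$. Hence $g=1$ and the lemma follows.
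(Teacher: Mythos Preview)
Your argument is essentially correct and, for $\End(\fa)$, matches the paper's proof: both reduce to $\gcd(y,z,x-w)=1$. Two small slips: the element you should use is $\tfrac{1}{d}(\varpi_c-w)$ rather than $\tfrac{1}{d}\varpi_c$ (only the off-diagonal entries and $x-w$ are known to be divisible by $d$); and in the $v_2(D)=2$ case the identity only gives $yz\equiv 2\pmod 4$, not $\pmod 8$---but that still yields $v_2(yz)=1$ and the desired contradiction.

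For $\End(\CN^{-1})$ the paper is extremely terse (``the same, noticing that $N_0\mid z$''), and you correctly identify that what is actually needed is $g=\gcd(yN_0,\,x-w,\,z/N_0)=1$. Your discriminant case analysis establishes this, but there is a much shorter route that is probably what the paper has in mind. First check directly (using $N_0\mid z$) that $\CO_c\cdot\CN^{-1}\subset\CN^{-1}$, so $\CO_c\subset\End(\CN^{-1})$. Since $\End(\CN^{-1})$ is an order containing $\CO_c$, it equals $\CO_{c'}$ for some $c'\mid c$, and the index $g=[\End(\CN^{-1}):\CO_c]=c/c'$ divides $c$. As $(c,N_0)=1$, we get $(g,N_0)=1$; combined with $g\mid yN_0$, $g\mid z/N_0$, $g\mid x-w$ this forces $g\mid\gcd(y,z,x-w)=1$. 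This avoids the split/ramified case analysis entirely.
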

\begin{proof}
Let $(a+bh_0^{-1})\fa\subset\fa$. Then it is equivalent to
$$(a+bh_0^{-1})\in\fa\text{ and }(a+bh_0^{-1})h_0^{-1}\in\fa.$$
The first condition implies $a,b\in\BZ$, then the second one is equivalent to $bh_0^{-2}\in\fa$. But
$$bh_0^{-2}=y^{-1}b((w-x)h_0^{-1}+z)\in\fa.$$
The condition $R\cap K=\CO_c$ tells $(w-x,z,y)=1$, so the above condition implies $b\in y\BZ$, which is exactly $\End(\fa)=\CO_c$.
The assertion for $\CN^{-1}$ is the same, noticing that $N_0|z$.
\end{proof}

Clearly, $\fa$ and $\CN$ are invertible ideals of $\CO_c$, and $\CN^{-1}/\fa\simeq \BZ/N_0\BZ$. Summing up:
\begin{prop}
Let  $K\hookrightarrow M_2(\BQ)$ be an admissible embedding and $h_0 \in \CH^{K^\times}$.
Denote by $\fa=\BZ+\BZ\cdot h_0^{-1}$ and $\CN^{-1}=\BZ+\BZ\cdot N_0^{-1}h_0^{-1}$, then
$$\displaystyle{P=\left(\BC/\fa,\CN^{-1}/\fa,H\left({\frac{y}{N_1}\atop\frac{yh_0^{-1}}{N_1}}\right)\right)}$$
is a Heegner point on $X_K(N)$ of conductor $c$.
\end{prop}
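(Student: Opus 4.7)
The plan is to verify that the triple $P$ matches the Definition of a Heegner point of conductor $c$ by instantiating Lemma \ref{lem:He} (with $m=1$) for an explicit choice of $\BZ$-basis $(u,v)$ of $\fa$ and element $g\in\GL_2^+(\BQ)$. By the two preceding lemmas, $\fa$ and $\CN^{-1}$ are invertible $\CO_c$-modules; using $N_0\mid z$ (forced by the admissibility $K\cap R_0(N_0)=\CO_c$), one checks that $\fa\subset\CN^{-1}$ with cyclic quotient of order $N_0$ generated by the image of $N_0^{-1}h_0^{-1}$, so $\CN^{-1}/\fa$ is a valid cyclic subgroup of order $N_0$ in $\BC/\fa$.

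Next, I would take the $\BZ$-basis $(u,v)=(h_0^{-1},1)$ of $\fa$ and set $g=1$. Then
$$\BZ\tfrac{u}{N_0}+\BZ v=\BZ\cdot N_0^{-1}h_0^{-1}+\BZ=\CN^{-1},$$
so the set $\CN^{-1}$ in the proposition coincides with the product $\CN^{-1}\fa$ appearing in Lemma \ref{lem:He}. Moreover $v/u=h_0=g^{-1}h_0$, and $K\cap gRg^{-1}=K\cap R=\CO_c$ by admissibility. With these choices the Definition assigns the level-$N_1$ structure $H\left({\frac{v}{N_1}\atop \frac{u}{N_1}}\right)=H\left({\frac{1}{N_1}\atop \frac{h_0^{-1}}{N_1}}\right)$, and it remains to identify this $H$-orbit with $H\left({\frac{y}{N_1}\atop \frac{yh_0^{-1}}{N_1}}\right)$.

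The main obstacle is to show that $y$ is a unit modulo $N_1$; once this is established, the identification of the two orbits is immediate since $H=(\CO_K/N_1\CO_K)^\times$ contains the scalars $(\BZ/N_1\BZ)^\times$. For each prime $p\mid N_1$, $p$ is inert in $K$ and $(c,N)=1$ forces $\CO_c\otimes\BZ_p\cong\CO_K\otimes\BZ_p$, the ring of integers of the unramified quadratic extension $K_p/\BQ_p$; hence the minimal polynomial of $\varpi_c$ remains irreducible modulo $p$. If $p\mid y$, the matrix $\matrixx{x}{y}{z}{w}$ representing $\varpi_c$ would be lower triangular modulo $p$, and its characteristic polynomial $t^2-(x+w)t+xw$ would split over $\BF_p$, a contradiction. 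Therefore $y$ is coprime to $N_1$, and combining this with the previous steps and Lemma \ref{lem:He} shows that $P$ is a Heegner point of conductor $c$.
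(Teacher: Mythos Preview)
Your proof is correct and follows the natural route that the paper leaves implicit (the paper simply writes ``Summing up:'' before the proposition, relying on the two preceding lemmas). Your explicit choice $(u,v)=(h_0^{-1},1)$, $g=1$ is exactly the intended instantiation of Lemma~\ref{lem:He} with $m=1$, and your additional argument that $y\in(\BZ/N_1\BZ)^\times$---via irreducibility mod $p$ of the minimal polynomial of $\varpi_c$ for each inert $p\mid N_1$---cleanly justifies the identification $H\!\left({\frac{1}{N_1}\atop\frac{h_0^{-1}}{N_1}}\right)=H\!\left({\frac{y}{N_1}\atop\frac{yh_0^{-1}}{N_1}}\right)$, a detail the paper does not spell out.
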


\begin{example}\label{example-embed}
Now we construct an admissible embedding $K\hookrightarrow M_2(\BQ)$ as following. Since $\ell|N_0$ implies that $\ell$ is split in $K$, there exists an integral ideal $\fN_0$ of $\CO_K$ such that $\CO_K/\fN_0\simeq\BZ/N_0$, which implies $\BZ+\fN_0=\CO_K$. Then there exists $n\in\BZ$ and $m\in\fN_0$ such that
$$\frac{D+\sqrt{D}}{2}=n+m.$$
Take trace and norm, we get
$$D=2n+(m+\bar{m})\text{ and }\frac{D^2-D}{4}=n^2+n(m+\bar{m})+m\bar{m}$$
Since $m\bar{m}\in\fN_0\ov{\fN_0}=N_0\CO_K$ and it is an integer, so $m\bar{m}=N_0b$ for some $b\in\BZ$. Let $a=D-2n$, we see
$$D=a^2-4N_0b$$
It's easy to check that $(a,b,N_0)=1$. Given an integer $c$ such that $(c,N)=1$,  we let the embedding $i_c:K\longrightarrow B$ be given by
$$\xymatrix{
\dfrac{D+\sqrt{D}}{2}\ar@{|->}[r] & {\mat(\frac{D+a}{2},-c^{-1},N_0bc,\frac{D-a}{2})}
},\text{ or }\xymatrix{
\dfrac{Dc+\sqrt{Dc^2}}{2}\ar@{|->}[r] & {\mat(\frac{Dc+ac}{2},-1,N_0bc^2,\frac{Dc-ac}{2})}
}.$$
We can see this embedding is normal in the sense of \cite{Sh}, and $\displaystyle{h_0=\frac{a+\sqrt{D}}{2N_0bc}}$.
\end{example}

The modular curve $X_K(N)$ depends on the admissible embedding $K\hookrightarrow M_2(\BQ)$. However, we will prove that, all those modular curves given by admissible embeddings are isomorphic over $\BQ$. Let $i:K\hookrightarrow M_2(\BQ)$ be an admissible embedding, $H=i(\CO_K/N_1\CO_K)\subset\GL_2(\BZ/N_1\BZ)$, and $H_0$ be the upper-triangular matrices in $\GL_2(\BZ/N_0\BZ)$, then $H=\prod_{p|N_1}H_p$, where $H_p\subset\GL_2(\BZ/p^{\ord_pN_1}\BZ)$. Then
$$X_K(N)=X(N_0N_1)/(H\times H_0).$$

If $i'$ is another admissible embedding, and for any $p|N_1$, $H_p$ and $H_p'$ are conjugate in $\GL_2(\BZ/p^{\ord_pN_1}\BZ)$, then we obviously have
$$X(N_0N_1)/(H\times H_0)\simeq X(N_0N_1)/(H'\times H_0)$$

In fact, $H_p$ is the image of following kind of morphism:$\BZ_{p^2}^\times\to\GL_2(\BZ_p)\to\GL_2(\BZ_p/p^{\ord_p(N_1)}\BZ_p)$, then the following lemma implies that $H_p$ and $H_p'$ are conjugate in $\GL_2(\BZ/p^{\ord_pN_1}\BZ)$.

\begin{lem}
For any two embeddings $\varphi_i:\BZ_{p^2}\to M_2(\BZ_p), i=1,2$, there exists $g\in\GL_2(\BZ_p)$, such that $\varphi_1=g^{-1}\varphi_2 g$.
\end{lem}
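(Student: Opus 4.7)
The plan is to view $\BZ_p^2$ as a $\BZ_{p^2}$-module in two different ways (via $\varphi_1$ and $\varphi_2$), observe that both module structures make it free of rank one over $\BZ_{p^2}$, and then produce a $\BZ_p$-linear isomorphism intertwining them. Such an isomorphism is automatically an element $g \in \GL_2(\BZ_p)$ realizing the desired conjugation $\varphi_1 = g^{-1}\varphi_2 g$.

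First, for each $i$ I would verify that the $\BZ_p$-module $\BZ_p^2$ endowed with the $\BZ_{p^2}$-action $a\cdot v := \varphi_i(a)v$ is a free $\BZ_{p^2}$-module of rank one. The ring $\BZ_{p^2}$ is a discrete valuation ring; the module is finitely generated (it is already so over the subring $\BZ_p$) and torsion-free, since for any nonzero $a \in \BZ_{p^2}$ the extension $\varphi_i \otimes \BQ_p : \BQ_{p^2} \hookrightarrow M_2(\BQ_p)$ sends $a$ to an invertible matrix, so $\varphi_i(a)$ acts injectively on $\BZ_p^2$. By the structure theorem over a PID the module is free, and comparing $\BZ_p$-ranks (both equal to $2$) forces the $\BZ_{p^2}$-rank to be one.

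Given this, fix generators $e_1, e_2 \in \BZ_p^2$ of the two $\BZ_{p^2}$-module structures and define
\[ g : \BZ_p^2 \longrightarrow \BZ_p^2, \qquad \varphi_1(a)e_1 \longmapsto \varphi_2(a)e_2, \qquad a \in \BZ_{p^2}. \]
This map is well-defined and $\BZ_p$-linear, because $\varphi_i$ restricted to $\BZ_p$ is the scalar inclusion (the center of $M_2(\BZ_p)$). The same formula with $\varphi_1$ and $\varphi_2$ interchanged provides the inverse, so $g \in \GL_2(\BZ_p)$. By construction $g \circ \varphi_1(a) = \varphi_2(a) \circ g$ for every $a \in \BZ_{p^2}$, i.e., $\varphi_1 = g^{-1}\varphi_2 g$. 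The only nontrivial step is the rank-one freeness, which is essentially the statement that $\BZ_{p^2}$ is the maximal order of $\BQ_{p^2}$; the same fact also shows that no ``optimality'' hypothesis on $\varphi_i$ is needed, since $\varphi_i^{-1}(M_2(\BZ_p))$ is an order of $\BQ_{p^2}$ containing $\BZ_{p^2}$ and is therefore equal to it.
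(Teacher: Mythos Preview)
Your proof is correct and follows essentially the same approach as the paper: both view $\BZ_p^2$ as a $\BZ_{p^2}$-module via each $\varphi_i$, invoke the structure theorem over the DVR $\BZ_{p^2}$ to see that each module is free of rank one, and take any $\BZ_{p^2}$-module isomorphism between them as the conjugating element $g\in\GL_2(\BZ_p)$. Your write-up is somewhat more detailed (e.g.\ explaining the torsion-freeness and the rank count explicitly), but the underlying argument is the same.
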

\begin{remark}
If we change $\BZ_p$ to $\BQ_p$, and $\BZ_{p^2}$ to $\BQ_{p^2}$, this lemma is well-known.
\end{remark}
\begin{proof}
Consider $V=\BZ_p\oplus\BZ_p$, with a natural action of $M_2(\BZ_p)$. Via $\varphi_i$, we view $V$ as an$\BZ_{p^2}$-module, denoted by $V_i,i=1,2$. Since $\BZ_{p^2}$ is a discrete valuation ring and $V_i$ are torsion free, so $V_1,V_2$ are both free $\BZ_{p^2}$-module of rank 1, so there exists an isomorphism $g:V_1\to V_2$ of $\BZ_{p^2}$-module, this isomorphism corresponds to an element of $\GL_2(\BZ_p)$, also denoted by $g$. $g$ is an isomorphism of $\BZ_{p^2}$-modules means that
$$g\varphi_1(x)=\varphi_2(x)g,\forall x\in\BZ_{p^2}.$$
\end{proof}

\begin{lem}\label{lem-cusp}
   Let $\zeta_{N_1}$ be a primitive $N_1$-th root of unity. Then	
   the cusp $\infty$ of $X_K(N)$ is defined over $\BQ(\zeta_{N_1})$.
\end{lem}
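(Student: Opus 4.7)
The plan is to exhibit the cusp $\infty$ as a $\BQ(\zeta_{N_1})$-rational point of $X_K(N)$ via the Tate-curve uniformization near the cusp. First I would identify the cusp as a specific Tate degeneration: in a formal neighborhood of $\infty$ on $X(N_0N_1)$, the universal generalized elliptic curve is the Tate curve with its canonical level structure $(1,0)\mapsto \zeta_{N_0N_1}$ and $(0,1)\mapsto q^{1/N_0N_1}$. After descending by the level group $H\times H_0$, with $H_0$ the upper-triangular subgroup of $\GL_2(\BZ/N_0\BZ)$, the resulting data at the cusp of $X_K(N)$ becomes the triple $\bigl(\text{Tate}(q),\ \mu_{N_0},\ [\alpha_\infty]\bigr)$ with $\alpha_\infty(1,0)=\zeta_{N_1}$ and $\alpha_\infty(0,1)=q^{1/N_1}$, taken modulo the $H$-action.

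Next I would observe that this data is defined over the ring $\BQ(\zeta_{N_1})[[q^{1/N_1}]]$: the subgroup $\mu_{N_0}$ is already $\BQ$-rational, and the basis $\alpha_\infty$ only involves $\zeta_{N_1}$ and the formal parameter $q^{1/N_1}$. By the moduli property of $X_K(N)$, this produces a morphism $\Spec \BQ(\zeta_{N_1})[[q^{1/N_1}]]\to X_K(N)$ identifying the source with the formal completion at the cusp, and restriction to the closed point $q^{1/N_1}=0$ yields a $\BQ(\zeta_{N_1})$-valued point landing at $\infty$. Equivalently, for any $\sigma\in\Gal(\ov\BQ/\BQ(\zeta_{N_1}))$ one checks that $\sigma$ fixes the defining data ($\mu_{N_0}$ is $\BQ$-rational, $\sigma$ fixes $\zeta_{N_1}$ by assumption, and $\sigma$ acts trivially on the formal variable $q^{1/N_1}$), so $\sigma$ fixes the cusp.

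The step that needs the most care is the width at the cusp: one must verify that $q^{1/N_1}$ really is the correct uniformizer at $\infty$, so that the completion is $\BQ(\zeta_{N_1})[[q^{1/N_1}]]$ rather than $\BQ(\zeta_{N_1})[[q^{1/h}]]$ for some other $h$. This reduces to showing that the smallest $h>0$ with $\mat(1,h,0,1)\in\Gamma_K(N)$ is $N_1$. Using the admissible embedding of Example \ref{example-embed}, the mod-$N_1$ reduction of $\mat(1,h,0,1)$ lies in the image of $\CO_K$ in $M_2(\BZ/N_1\BZ)$ if and only if $N_1\mid h$; the hypothesis that each prime $p\mid N_1$ is inert in $K$ enters through the nonvanishing of $\beta$ modulo $p$, in the notation of that example. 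Granted this width computation, the conclusion follows from the first two paragraphs.
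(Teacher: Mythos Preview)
Your approach is correct and genuinely different from the paper's. The paper works entirely on the adelic side: it uses the description of cusps as $\GL_2(\BQ)_+ \bs \BP^1(\BQ) \times \GL_2(\wh{\BZ}) / \wh{R}^\times$, invokes Scholl's description of the $\Gal(\BQ^\ab/\BQ)$-action as left multiplication by $\mat(x,0,0,1)$, and then directly computes the stabilizer of $[\infty,1]$, finding that $x_p \equiv 1 \pmod{N_1}$ for all $p\mid N_1$. Your argument is moduli-theoretic instead: you write down the Tate-curve degeneration with its $(\mu_{N_0},H\text{-orbit of }\alpha_\infty)$-structure over $\BQ(\zeta_{N_1})((q^{1/N_1}))$, appeal to the moduli property of $Y_K(N)$ and properness of $X_K(N)$ to extend across $q^{1/N_1}=0$, and read off a $\BQ(\zeta_{N_1})$-valued point at $\infty$. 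The adelic route is shorter and yields the \emph{exact} field of definition in one stroke, but uses the cited result as a black box; your route is more self-contained once one accepts Deligne--Rapoport/Katz--Mazur for the cuspidal moduli, and it has the pleasant feature of making the role of the level-$N_1$ structure visible.

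Two small remarks. First, your width computation is correct but not actually needed for the statement: even if the uniformizer at $\infty$ were $q^{1/h}$ for some proper divisor $h\mid N_1$, your map $\Spec \BQ(\zeta_{N_1})[[q^{1/N_1}]]\to X_K(N)$ would still send the closed point to $\infty$ and still exhibit $\infty$ as $\BQ(\zeta_{N_1})$-rational. The width computation is only required if you want to assert that this map is an \emph{isomorphism} onto the formal completion. Second, there is no ``$\beta$'' in Example~\ref{example-embed}; what you need is that the lower-left matrix entry $N_0bc^2$ is a unit mod each $p\mid N_1$, and this reduces (since $(N_0c,N_1)=1$) to $p\nmid b$, which indeed follows from $D=a^2-4N_0b$ being a non-square mod $p$.
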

\begin{proof}
In the adelic language, we have the following complex uniformization
\[X_K(N)(\BC) = \GL_2(\BQ)_+ \bs \CH \times \GL_2(\wh{\BZ})/ \wh{R}^\times \cup \{\mathrm{cusps}\}\]
where $\wh{R}  = R \otimes_\BZ \wh{\BZ}$ and the cusps are
\[\GL_2(\BQ)_+ \bs \BP^1(\BQ) \times \GL_2(\wh{\BZ}) / \wh{R}^\times.\]
The cusps are all defined over $\BQ^\ab$.
By \cite[pp.507]{Sch},
if we let $r:\wh{\BQ}^\times/\BQ^\times\to\Gal(\BQ^\ab/\BQ)$ be the Artin map,
then $r(x)\in \Gal(\BQ^\ab/\BQ)$ acts on the cusps by left multiplication the matrix $\mat(x,0,0,1)$.
Since $\wh{\BQ}^\times/\BQ^\times\simeq\wh{\BZ}^\times$, then if $x\in\wh{\BZ}^\times$ such that $r(x)\cdot[\infty,1]=[\infty,1]$,
there exists $\mat(\alpha,\beta,0,\gamma)\in\GL_2^+(\BQ)$, such that $\mat(\alpha,\beta,0,\gamma)\mat(x,0,0,1) \in \wh{R}^\times$,
which implies
$$\gamma\in\BZ_p^\times,\  \alpha x_p\in\BZ_p^\times,\  \beta\in N_1\BZ_p\text{ for all }p\text{, and }\alpha x_p\equiv\gamma\pmod{N_1}\text{ for all }p|N_1.$$
Then $\alpha = \gamma=\pm 1$, and $x_p\equiv  1\pmod {N_1}$. So the definition field of $[\infty,1]$ corresponds to $$\wh{\BQ}^\times/\BQ^\times\BZ^{\times(N_1)}\prod_{p|N_1}(1+N_1\BZ_p),$$
via class field theory, which is $\BQ(\zeta_{N_1})$.
\end{proof}

In the following, we fix the embedding $i_c:K\hookrightarrow M_2(\BQ)$.

\s{\bf{Atkin-Lehner operator.}}
Take $j=\mat(1,0,a,-1)$, then $kj=j\ov{k}$ for all $k\in K$ where $\ov{k}$ is the Galois conjugation of $k$.
For each $p\mid N_0$, let
\[w_p=\mat(0,1,p^{\ord_pN},0) \in \GL_2(\BQ_p)\]
be the local Atkin-Lehner operator. Define
\[w=j^{(N_0)}\cdot\prod_{p\mid N_0}w_p\in \GL_2(\wh{\BQ})\cap M_2(\wh{\BZ}).\]
Since $w$ normalizes $\wh{R}^\times$, it acts on $X_{K}(N)$.

For each $p|N_0$, write $N_0=p^km$ with $(p,m)=1$.
Similar to the proof of lemma (\ref{lem:He}), we can choose $u,v\in\BZ$ such that $p^ku+mv=1$,
let $g=\mat(p^k,1,-N_0 v,p^ku)\in\GL_2^+(\BQ)\cap M_2(\BZ)$, such that $g^{-1}w_p\in U$, so
$$w_p P=[h_0,w_p]=[g^{-1}h_0,1]=\left(\BC/\BZ\cdot g^{-1}h_0+\BZ,\Big\langle\frac{1}{N_0}\Big\rangle,H\left({\frac{g^{-1}h_0}{N_1}\atop \frac{1}{N_1}}\right)\right)$$
Modify it, we get $w_p P=\left(\BC/\BZ h_0^{-1}+\BZ\cdot p^k,\Big\langle v+\frac{h_0^{-1}}{m}\Big\rangle,H\left({\frac{p^k}{N_1}\atop \frac{p^kh_0^{-1}}{N_1}}\right)\right)$. However, $\fa=\BZ h_0^{-1}+\BZ\cdot p^k=\fp^k$ is an invertible ideal of $\CO$ dividing $\CN$. Suppose $\CN=\fa\fm$, let $\CN^\prime=\ov{\fa}\fm$, then $\CN^{\prime-1}\fa/\fa=\Big\langle v+\frac{h_0^{-1}}{m}\Big\rangle$.

In another words, consider the quotient map $\xi:X_K(N)\to X_0(N_0)$ induced by $\Gamma_K(N)\subset\Gamma_0(N_0)$, which is defined over $\BQ$, then the above argument says that $\xi\circ w_p=w_p\circ\xi$, where the action of $w_p$ on $X_0(N_0)$ is defined by \cite[pp.90]{GR}.

To study the action of $w$ on $X_K(N)$, we can prove the following lemma:
\begin{lem}\label{Bir}
	There exists $t_0\in\wh{K}^\times$ and $u\in \wh{R}^\times$ such that $w=t_0ju$.
\end{lem}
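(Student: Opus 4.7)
The plan is to verify the factorization $w = t_0 j u$ one prime at a time in $\GL_2(\wh{\BQ})$, constructing the components $t_{0,p}$ of $t_0 \in \wh{K}^\times$ and $u_p$ of $u \in \wh{R}^\times$ at each place separately.

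At primes $p \nmid N_0$, the $p$-component of $w$ equals $j$, since $j^{(N_0)}$ contributes $j$ there and every local Atkin--Lehner factor $w_{p'}$ with $p' \mid N_0$ is trivial away from $p'$. Taking $t_{0,p} = 1$ and $u_p = 1$ settles these places: for $p \nmid N$ one has $R_p = M_2(\BZ_p)$, and for $p \mid N_1$ (inert in $K$) one has $I \in \CO_{K,p} \subset R_p = \CO_{K,p} + N_1 M_2(\BZ_p)$.

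The nontrivial case is $p \mid N_0$, where $w_p = \mat(0,1,p^k,0)$ with $k := \ord_p N$, and $p$ is split or ramified in $K$. Let $\fp \mid p$ be the prime of $\CO_K$ distinguished by the ideal $\fN_0$ of Example~\ref{example-embed} (in the ramified case $\fp = \bar\fp$), and fix a uniformizer $\pi_\fp \in \CO_K$. I propose
\[ t_{0,p} := i_c\bigl(\overline{\pi_\fp}^{\,k}\bigr), \qquad u_p := j^{-1}\, t_{0,p}^{-1}\, w_p. \]
Using $j^{-1} = j$ together with the identity $j \cdot i_c(\alpha) = i_c(\bar\alpha) \cdot j$ for $\alpha \in K$ (conjugation by $j$ is Galois conjugation on $i_c(K)$), one rewrites $u_p = i_c(\pi_\fp^{-k}) \cdot (jw_p)$, where $jw_p = \mat(0,1,-p^k,a)$ by direct calculation.

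What remains is to verify $u_p \in R_0(N_0)_p^\times$: entries in $\BZ_p$, $(2,1)$-entry in $p^k \BZ_p$, and determinant a $p$-adic unit. The determinant check is automatic: $\det u_p = \Nr_{K_p/\BQ_p}(\pi_\fp)^{-k} \cdot p^k = 1$. For the entry conditions the key arithmetic input is Hensel's lemma applied to $X^2 - D$ at $a$: since $D = a^2 - 4N_0 b$ with $p^k \mid N_0$, in the split case one gets $\delta \in \BZ_p$ with $\delta^2 = D$ and $\delta \equiv a \pmod{p^k}$; in the ramified case one takes directly $\pi_\fp = \sqrt D$, with $v_p(a) \geq 1$ and $v_p(D) = 1$. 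Writing $\pi_\fp^{-k}$ as a $\BQ_p$-combination of $1$ and $\sqrt D$ under the splitting $K_p \cong \BQ_p \oplus \BQ_p$ and substituting into the formula for $i_c$ from Example~\ref{example-embed}, each of the four entries of $u_p$ simplifies to a ratio whose numerator is a $\BZ_p$-combination of $p^{-k}(\delta - a)$ and $(\delta + a)$, with valuations bounded by $v_p(\delta - a) \geq k$ and $v_p(\delta + a) = 0$. The ramified case is parallel and more direct.

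The main obstacle is selecting the correct prime: only $\overline{\pi_\fp}^{\,k}$ (not $\pi_\fp^k$) lands $u_p$ in $R_p^\times$; the opposite choice produces an entry of valuation $-k$. This sign reflects the orientation of the admissible embedding picked out by $\fN_0$ and corresponds to Hensel's lift selecting $\delta \equiv +a$ rather than $\delta \equiv -a$ modulo $p^k$.
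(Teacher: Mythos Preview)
Your proposal is correct and follows essentially the same place-by-place strategy as the paper: set $t_{0,p}=1$ for $p\nmid N_0$, and at $p\mid N_0$ verify by an explicit matrix computation that $j^{-1}t_{0,p}^{-1}w_p\in R_0(N_0)_p^\times$. The only difference is cosmetic---the paper parametrizes $\overline{t_{0,p}^{-1}}=x+y\sqrt{D}$, multiplies out $(x+y\sqrt{D})j^{-1}w_p$ as a matrix in $x,y$, and then chooses $\ord_p y=-k-\ord_p 2$ and $x+ay\in\BZ_p^\times$ directly, whereas you pin down $t_{0,p}=\overline{\pi_\fp}^{\,k}$ a priori and invoke Hensel to control the entries; both land on the same four entry conditions.
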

\begin{proof}
For $p|N_0$, let $k=\ord_p N_0\geq 1$, $K_p^\times=(\BQ_p+\BQ_p(\sqrt{D}))^\times$. Let $x,y\in\BQ_p$, then
$$(x+y\sqrt{D})j_p^{-1}w_p=\begin{pmatrix}-2p^k y & x+ay \\ -p^k(x+ay) & a(x+ay)-2N_0by\end{pmatrix}$$
So we choose $\ord_py=-k-\ord_p2,x+ay\in\BZ_p$ and such that $a(x+ay)\in\BZ_p^\times$. Then let $t_{0,p}=x+y\sqrt{D}$.

For $p\nmid N_0$, let $t_{0,p}=1$. Then such choice of $t_0$ works.
\end{proof}
\begin{remark}
By Shimura reciprocity law, if we use $[x]\mapsto\ov{[x]}$ to denote the complex conjugation on $X_K(N)(\BC)$, then
$$\ov{[h_0,g]}=[h_0,jg],\forall g\in\GL_2(\BA_f).$$
Lemma \ref{Bir} in fact tells that the action of $w$ is the composition of a Galois action and the complex conjugation.
\end{remark}

\s{\bf{Hecke correspondence.}} Let $\ell\nmid N$ be a prime, the Hecke correspondence on $X_U$ is defined by
$$T_\ell\left(E,C,H\left({x_1\atop x_2}\right)\right)=\sum_i\left(E/C_i,(C+C_i)/C_i,H\left({x_1\mod C_i}\atop {x_2\mod C_i}\right)\right)$$
where the sum is taken over all cyclic subgroups $C_i$ of $E$ of order $\ell$, $\alpha_i$ is given by $(\BZ/N_1\BZ)^2\stackrel{\alpha}{\longrightarrow} E[N_1]\simeq (E/C_i)[N_1]$. $E[N_1]\simeq (E/C_i)[N_1]$ is because $(\ell,N)=1$. This is just by definition.

\subsection{Gross-Zagier Formula}
Let $E$ be an elliptic curve of conductor $N$, $K$ be an imaginary quadratic field of discriminant $D$ and
$\chi$ be a ring class character over $K$ of conductor $c$. Assume
\begin{center}\textit{$E,K,\chi$} satisfy the condition $(*)$\end{center}
Then we can write $N=N_0N_1^2$, where $p|N$ is inert in $K$ if and only if $p|N_1$.

Embed $K$ in $M_2(\BQ)$ by $i_c$. There is a modular parametrization $f:X_K(N) \ra E$ mapping $[\infty]$ to the identity of
$E$. If $f_1,f_2$ are two such morphisms, then there exist integers $n_1,n_2$ such that $n_1f_1 = n_2f_2$. Let
$h_0$ be the point in $\CH$ fixed by $K^\times$, then $\CO_c=\BZ +\BZ h_0^{-1},\ \CN = \BZ + \BZ N_0^{-1}h_0^{-1}$ is an invertible ideal of $\CO_c$ such that
$\CN/\CO_c \cong \BZ/N_0\BZ$. Consider the following Heegner point on $X_K(N)$ of conductor $c$
\[P = \left(\BC/\CO_c, \CN/\CO_c, H
	\begin{pmatrix}
		\frac{1}{N_1} \\
		\frac{h_0^{-1}}{N_1}
\end{pmatrix}\right) \in X_K(N)(H_c).\]
Form the cycle:
$$P_\chi(f)=\sum_{\sigma \in \Gal(H_c/K)}f(P^{\sigma})\chi(\sigma) \in E(H_c)\otimes_\BZ \BC.$$

\begin{thm}[Explicit Gross-Zagier formula \cite{CST}]\label{GZ}
We have the following equation
$$L'(1,E,\chi)=2^{-\mu(N,D)}\cdot\frac{8\pi^2(\phi,\phi)_{\Gamma_0(N)}}{u^2c\sqrt{|D|}}\frac{\wh{h}_K(P_\chi(f))}{\deg f} \eqno(\rm{GZ})$$
here $\phi$ is the normalized newform associated to $E$, $\mu(N,D)$ is the number of prime factors of $(N,D)$,
$u = [\CO_K^\times:\BZ^\times]$, $\wh{h}_K$ is the Neron-Tate height pairing over $K$ and the Petersson inner product
\[(\phi,\phi)_{\Gamma_0(N)} = \int_{\Gamma_0(N) \bs \CH} |\phi(x+iy)|^2dxdy.\]
\end{thm}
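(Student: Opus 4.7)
The plan is to deduce this explicit formula from the general Gross--Zagier formula proved by Cai--Shu--Tian on Shimura curves, specialized to the indefinite quaternion algebra $B=M_2(\BQ)$ with level structure $U=\wh R^\times\subset\GL_2(\wh\BQ)$ coming from the admissible embedding $i_c\colon K\hookrightarrow M_2(\BQ)$. First I would verify that condition $(*)$ matches the local epsilon-factor dichotomy forcing the global sign of $L(s,\phi,\chi)$ to be $-1$: at split primes this is automatic, at ramified primes $(*)(\mathrm{ii})$ supplies the required condition $\chi([\fp])\neq a_p$, and at inert primes the evenness of $\ord_p N$ ensures a nonzero line of $U_p$-invariants in $\pi_p\otimes\chi_p$. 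This identifies $X_K(N)$ as the correct Shimura curve and $P$ as a CM-point of conductor $c$ in the sense of CST.

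Second, I would descend from the Jacobian to $E$ via the modular parametrization. Let $P_\chi^\phi\in J_K(N)(H_c)\otimes_\BZ\BC$ denote the $\phi$-isotypic Heegner divisor $\sum_\sigma[P^\sigma-\infty]\otimes\chi(\sigma)$. The general formula of CST computes $\wh h_K(P_\chi^\phi)$ in terms of $L'(1,\phi,\chi)$. Using $f_*\circ f^*=\deg f$ on $E$, together with the observation that the cuspidal contribution at $[\infty]$ has trivial $\phi$-isotypic component (since $\phi$ is cuspidal and cusps pair trivially with cuspidal forms), one obtains $\wh h_K(P_\chi(f))=\deg(f)\cdot\wh h_K(P_\chi^\phi)$, which produces the $\deg f$ in the denominator of $(\mathrm{GZ})$.

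Finally, the explicit numerical constants emerge from matching local toric periods at each place. The archimedean Gamma factor together with the Petersson inner product on $\Gamma_0(N)\bs\CH$ contributes $8\pi^2(\phi,\phi)_{\Gamma_0(N)}$; the volume of the torus $\wh K^\times/\BQ^\times\wh\CO_c^\times$ for the Tamagawa-normalized Haar measure yields $c\sqrt{|D|}$; and the factor $u^2$ with $u=[\CO_c^\times\colon\{\pm 1\}]$ comes from the finite automorphism group of the CM elliptic curve. The main obstacle, and the only really subtle piece, is the factor $2^{-\mu(N,D)}$: one factor of $\tfrac12$ for each prime $p\mid(N,D)$, i.e.\ each $p\mid N$ ramified in $K$. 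At such $p$, the new-vector theory for $\pi_p$ under the order $R_p=\CO_{c,p}+N_1R_0(N_0)_p$ has to be made fully explicit, and the local Waldspurger--Ichino toric integral, computed against the specific test vector dictated by $U_p=R_p^\times$ and the ramified character $\chi_p$, must be shown to equal $\tfrac12$ of its naive value. This local harmonic analysis on $\GL_2(\BQ_p)$ at ramified places is where the delicate work concentrates; the archimedean and unramified places reduce to classical computations.
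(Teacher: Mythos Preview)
The paper does not give its own proof of this theorem: it is stated as a citation of \cite{CST} (Cai--Shu--Tian), and no proof environment follows it. So there is nothing in the paper to compare your argument against at the level of detail.

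That said, your outline is essentially the strategy of \cite{CST} itself: start from the general Gross--Zagier formula of Yuan--Zhang--Zhang on Shimura curves, specialize to $B=M_2(\BQ)$ with the level $U=\wh R^\times$ determined by the admissible order $R=\CO_c+N_1R_0(N_0)$, and then compute the local toric periods explicitly place by place. Your identification of where each constant comes from (the archimedean contribution $8\pi^2(\phi,\phi)_{\Gamma_0(N)}$, the torus volume $u^2c\sqrt{|D|}$, the $\deg f$ from comparing heights on the Jacobian and on $E$, and the factor $2^{-\mu(N,D)}$ from the ramified local integrals) matches the structure of the argument in \cite{CST}. One small correction: the height comparison is not literally $\wh h_K(P_\chi(f))=\deg(f)\cdot\wh h_K(P_\chi^\phi)$ as a bare equality of numbers, but rather arises from the projection-formula identity $f_*f^*=\deg f$ together with the fact that $f^*$ identifies the N\'eron--Tate pairing on $E$ with the restriction of the Jacobian pairing to the $\phi$-isotypic line; the net effect on the formula is exactly the $\deg f$ in the denominator, as you say. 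The genuinely nontrivial local analysis is, as you correctly isolate, at primes $p\mid(N,D)$, and that is where \cite{CST} does the explicit work with test vectors in $\pi_p$ for the order $R_p$.
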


\subsection{Euler System}\label{euler}
Let $S$ be a finite set of primes containing the prime factors of $6cND_K$, $\BN_S$ denote the set of integers any of whose prime divisors is not in $S$.
For any $\ell, m\in \BN_S$ with $\ell$ a prime and $\ell\nmid m$, let $P_m=(\BC/\fa_m,\CN_m^{-1}\fa_m/\fa_m,\alpha_m)$ be a Heegner point of conductor $cm$. Let $P_{m\ell}=(\BC/\fa_{m\ell},\CN^{-1}_{m\ell}\fa_{m\ell}/\fa_{m\ell},\alpha_{m\ell})$, such that $\CN_{m\ell}=\CN_m\cap\CO_{m\ell},\fa_{m\ell}=\fa\cap\CO_{m\ell}$, $\alpha_{m\ell}$ is the composition
$$(\BZ/N_1\BZ)^2\stackrel{\alpha_m}{\longrightarrow}N_1^{-1}\fa_m/\fa_{m}\stackrel{\sim}{\longrightarrow}N_1^{-1}\fa_{m\ell}/\fa_{m\ell}.$$
\begin{thm}\label{Nr:He}
Then we have that $[H_{m\ell}: H_m]=(\ell+1)/u_m$ if $\ell$ is inert in $K$ and
$(\ell-1)/u_m$ if $\ell$ is split and
$$u_m \sum\limits_{\sigma\in \Gal(H_{m\ell}/H_m)} P^\sigma_{m\ell} =\begin{cases}
\RT_\ell P_m, \qquad &\text{if $\ell$ is inert in $K$},\\
(\RT_\ell-\sum_{w|\ell} \Frob_w)P_m, \qquad &\text{if $\ell$ is
split in $K$},
\end{cases}$$
where $\RT_\ell$ is the Hecke correspondence, $\Frob_w$ is the Frobenius at $w|\ell$ in $\Gal(H_m/K)$,
and $u_m=1$ if $m\neq 1$ and $u_1=[\CO_K^\times: \BZ^\times]$.
\end{thm}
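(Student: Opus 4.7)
The plan is to follow the classical Kolyvagin/Gross Euler-system argument (\cite{GR2}), adapted to $X_K(N)$. Two ingredients drive the proof: Shimura's reciprocity law, which describes the Galois action on CM points adelically, and the moduli interpretation of the Hecke correspondence $T_\ell$ at $\ell \nmid N$. The degree formula $[H_{m\ell}:H_m] = (\ell \mp 1)/u_m$ is standard class field theory, following from $\Gal(H_{cj}/K) \cong \Pic(\CO_{cj})$ and the usual exact sequence comparing $\Pic(\CO_{cm})$ and $\Pic(\CO_{cm\ell})$: the middle group $(\CO_K/\ell)^\times/(\BZ/\ell)^\times$ has order $\ell+1$ (inert) or $\ell-1$ (split), with an $\CO_{cm}^\times$ factor accounting for the $u_m$ correction.

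For the sum identity, I would present $P_m = [h_0, g_m]$ adelically in $\GL_2(\BQ)_+ \bs \CH \times \GL_2(\wh\BZ)/\wh R^\times$ using Lemma \ref{lem:He}, so that $g_m \wh R g_m^{-1} \cap \wh K = \wh\CO_{cm}$. Shimura reciprocity yields $\sigma\cdot[h_0, g_m] = [h_0, xg_m]$ for $x \in \wh K^\times$ representing $\sigma$ under the Artin map, so the $\Gal(H_{m\ell}/H_m)$-orbit of $P_{m\ell} = [h_0, g_{m\ell}]$ is encoded by lifts of $\ker(\Pic(\CO_{cm\ell}) \to \Pic(\CO_{cm}))$ to $\wh K^\times$. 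On the other hand, $T_\ell[h_0, g_m] = \sum_\gamma [h_0, g_m\gamma]$ with $\gamma$ running over representatives of $\GL_2(\BZ_\ell) \bs \GL_2(\BZ_\ell)\mat(\ell,0,0,1)\GL_2(\BZ_\ell)$ trivial away from $\ell$; moduli-theoretically these $\ell + 1$ terms are the quotients $E_m/C'$ by cyclic order-$\ell$ subgroups $C' \subset E_m[\ell]$, with the level structure at $N_1$ transported through each isogeny, which is an isomorphism on $N_1$-torsion since $(\ell, N) = 1$.

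The case analysis at $\ell$ then finishes the proof. In the inert case, $\CO_K \otimes \BZ_\ell$ is the unramified quadratic extension of $\BZ_\ell$ and acts transitively on the $\ell+1$ cyclic order-$\ell$ subgroups of $E_m[\ell]$; each quotient has CM by $\CO_{cm\ell}$, and a local comparison of $g_m\gamma$ with the $g_{m\ell}$ of Lemma \ref{lem:He} matches them to the $u_m \cdot [H_{m\ell}:H_m] = \ell + 1$ Galois conjugates of $P_{m\ell}$. In the split case $\ell\CO_K = \fl\ov{\fl}$, the two distinguished subgroups $E_m[\fl]$ and $E_m[\ov{\fl}]$ produce quotients still CM by $\CO_{cm}$, which by Shimura reciprocity are precisely $\Frob_\fl P_m$ and $\Frob_{\ov{\fl}} P_m$; the remaining $\ell - 1$ quotients form the Galois orbit of $P_{m\ell}$. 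Rearranging yields the stated identity.

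The main technical obstacle is the local bookkeeping at primes $p \mid N_1$: one must verify that the level structure $H\binom{1/N_1}{h_0^{-1}/N_1}$ on $E_m$ transports under each isogeny $E_m \to E_m/C'$ to the $\alpha_{m\ell}$ attached to $\fa_{m\ell} = \fa_m \cap \CO_{cm\ell}$, up to $\wh R^\times$. Since $(\ell, N) = 1$ makes the isogeny \'etale on $N_1$-torsion, this reduces to a local identification at $\ell$ using the admissible embedding $i_c$ of Example \ref{example-embed}; a parallel but easier check at primes of $N_0$ handles the cyclic $N_0$-structure.
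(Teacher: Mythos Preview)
Your sketch is correct and follows the standard route: Shimura reciprocity to identify the Galois orbit of $P_{m\ell}$ adelically, the moduli description of $T_\ell$ as a sum over cyclic $\ell$-subgroups, and the inert/split case analysis matching those $\ell+1$ quotients either entirely to conjugates of $P_{m\ell}$ or to conjugates plus the two Frobenius translates. The degree computation via the exact sequence for $\Pic(\CO_{cm\ell}) \to \Pic(\CO_{cm})$ is also right.

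The paper, however, does not give its own proof here at all: it simply records that the result is established in general in \cite[Proposition 4.8]{Ne} and \cite[Theorem 3.1.1]{Tian3}. What you have written is, in outline, precisely the argument those references carry out (in the broader Shimura-curve setting), so you are not taking a different route so much as filling in what the paper chose to cite away. The one point worth flagging is that your ``main technical obstacle'' paragraph is more of an acknowledgment than a resolution: the compatibility of the $H$-level structure at primes dividing $N_1$ under the degree-$\ell$ isogenies is exactly the step that distinguishes $X_K(N)$ from $X_0(N)$, and a fully written proof would need to make that transport explicit (via Lemma~\ref{lem:He} and the fact that $(\ell,N_1)=1$ makes the isogeny an isomorphism on $N_1$-torsion). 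If you intend this as a proof rather than a sketch, that verification should be spelled out; otherwise, citing \cite{Ne} or \cite{Tian3} as the paper does is the honest option.
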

This theorem is proved in general by \cite[Proposition 4.8]{Ne} or \cite[Theorem 3.1.1]{Tian3}.

\subsection{Waldspurger Formula and Gross Points}\label{section-walds}

Let $\phi=\sum_{n=1}^\infty a_n q^n$ be a newform of weight $2$, level $\Gamma_0(N)$, normalized such that $a_1=1$.
Let $K$ be an imaginary quadratic field of discriminant $D$ and
$\chi$ a  ring class character over $K$ of conductor $c$.
Let $L(s,\phi,\chi)$ be the Rankin-Selberg convolution of $\phi$ and $\chi$.

Assume that $(c,N) = 1$. Denote by $\BS$ the set of primes $p|N$ satisfying one of the following conditions:
\begin{itemize}
	\item $p$ is inert in $K$ with $\ord_p(N)$ odd;
	\item $p|D$, $\ord_p(N) = 1$ and $\chi([\fp]) = a_p$ where $\fp$ is the prime of $\CO_K$ above $p$
		and $[\fp]$ is its class in $\Pic(\CO_c)$;
	\item $p|D$, $\ord_p(N) \geq 2$ and the local root number of $L(s,\phi,\chi)$ at $p$ equals $-\eta_p(-1)$
	     where $\eta_p$ is the quadratic character for $K_p/\BQ_p$.
\end{itemize}
Then the sign of $L(s,\phi,\chi)$ is $+1$. Let $B$ be
the definite quaternion algebra defined over $\BQ$ ramified exactly at primes in $\BS\cup\{\infty\}$. Fix an embedding
from $K$ in $B$. Let $R$ be an order in $B$ with discriminant $N$ and $R \cap K = \CO_c$. Denote by
$\hat{R} = R \otimes_\BZ \hat{\BZ}$ and $U = \hat{R}^\times$ which is an open compact subgroup of $\hat{B}^\times$.
Consider the Shimura set $X_U = B^\times \bs \hat{B}^\times /U$ which is a finite set. A point in
$X_U$ represented by $x \in \hat{B}^\times$ is denoted by $[x]$.
Note that for $p|(D,N)$, $K_p^\times$ normalizes $U$ and then $K_p^\times$ acts on $X_U$ by
right multiplication. Let
\[\BC[X_U]^0 = \left\{ f \in \BC[X_U] \Bigg| \sum_{x \in X_U} f(x) = 0 \right\}.\]
For each $p \not| N$, there are Hecke correspondences $T_p$ and $S_p$. In this case, $B_p$ is split while
$U_p$ is maximal. Then the quotient $B_p^\times/U_p$ can be identified with $\BZ_p$-lattices in $\BQ_p^2$.
Then for any $[x] \in X_U$,
\[S_p[x] := [x^{(p)}s_p], \quad T_p[x] := \sum_{h_p} [x^{(p)}h_p]\]
where if $x_p$ corresponds to a lattice $\Lambda$, then $s_p$ is the lattice $p\Lambda$ and
the set $\{ h_p \}$ is the set of  sublattices $\Lambda'$ of $\Lambda$ with $[\Lambda:\Lambda'] = p$.
There is then a line $V(\phi,\chi)$ of $\BC[X_U]^0$ characteristized as following
\begin{itemize}
	\item for any $p \not| N$, $T_p$ acts on $V(\phi,\chi)$ by $a_p$ and $S_p$ acts trivially;
	\item for any $p|(D,N)$ with $\ord_p(N) \geq 2$, $K_p^\times$ acts on $V(\phi,\chi)$ by $\chi_p$.
\end{itemize}
Let $f$ be a nonzero vector in $V(\phi,\chi)$ and consider the period
\[P_\chi(f) = \sum_{\sigma \in \Gal(H_c/K)} f(\sigma)\chi(\sigma)\]
where the embedding of $K$ to $B$ induces a map
\[\Gal(H_c/K) = K^\times \bs \hat{K}^\times /\wh{\CO}_c^\times \lra X_U.\]

\begin{thm}[Explicit Waldspurger formula \cite{CST}]
\label{cst-wald}
We have the following equation
$$L(1,\phi,\chi)=2^{-\mu(N,D)}\cdot\frac{8\pi^2(\phi,\phi)_{\Gamma_0(N)}}{u^2c\sqrt{|D|}}
\frac{|P_\chi(f)|^2}{\langle f,f \rangle} \eqno(\rm{Wald})$$
where the pairing
\[\langle f,f \rangle = \sum_{[x] \in X_U} |f(x)|^2 w([x])^{-1}\]
and $w([x])$ is the order of the finite group $(B^\times \cap xUx^{-1})/\{ \pm 1\}$.
\end{thm}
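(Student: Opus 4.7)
The plan is to deduce the formula from Waldspurger's general toric period identity, applied to the Jacquet--Langlands transfer of $\phi$ to the definite quaternion algebra $B$; this is the definite analogue of Theorem \ref{GZ}. First I would set up the automorphic picture: let $\pi = \otimes_v \pi_v$ be the unitary cuspidal representation of $\GL_2(\BA)$ attached to $\phi$, and let $\chi$ also denote the idele class character of $\BA_K^\times/K^\times$ induced by the ring class character of conductor $c$. The conditions defining $\BS$ are exactly the Tunnell--Saito $\epsilon$-dichotomy conditions forcing $\epsilon(1/2, \pi_v \otimes \chi_v) = +1$ at every finite $v$; the archimedean sign is $+1$ by weight considerations, so the global root number is $+1$ and Waldspurger's theorem selects the definite quaternion algebra $B$ ramified exactly at $\BS \cup \{\infty\}$. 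By Jacquet--Langlands, $\pi$ transfers to $\pi' = \otimes_v \pi'_v$ on $B^\times(\BA)$, which, since $B$ is definite, is realized inside the space of functions on $B^\times \bs \wh{B}^\times$.

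The second step is to identify $V(\phi, \chi) \subset \BC[X_U]^0$ with the correct quaternionic test-vector line inside $\pi'$. The Hecke conditions (the actions of $T_p$ and $S_p$ for $p \nmid N$) cut out the $\pi'$-isotypic subspace of $\BC[X_U]^0$; local multiplicity one (Tunnell--Saito plus Gross--Prasad) makes the $\chi^{-1}$-toric line one-dimensional, and the $K_p^\times$-conditions for $p \mid (D,N)$ with $\ord_p(N) \geq 2$ pin down the CST choice of quaternionic test vector. I would then apply Waldspurger's formula in the Yuan--Zhang--Zhang normalization,
\[\frac{|P_\chi(f)|^2}{\langle f,f\rangle} = \frac{\zeta(2)}{8} \cdot \frac{L(1/2, \pi_K \otimes \chi)}{L(1, \eta)^2 L(1, \pi, \ad)} \cdot \prod_v \alpha_v^\natural(f_v, \chi_v),\]
where $\eta$ is the quadratic character attached to $K/\BQ$, and the normalized local toric integrals $\alpha_v^\natural$ equal $1$ at unramified places. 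Converting $L(1/2, \pi_K \otimes \chi)$ to $L(1, \phi, \chi)$ in the classical normalization, and rewriting $L(1, \pi, \ad)$ in terms of $(\phi, \phi)_{\Gamma_0(N)}$ by the standard dictionary (which supplies the archimedean factor $8\pi^2/u^2$ together with the discriminant/conductor factor $c \sqrt{|D|}$), reduces the problem to the computation of the finite product $\prod_v \alpha_v^\natural$.

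The main obstacle, and the bulk of the proof in \cite{CST}, is the explicit computation of $\alpha_v^\natural$ at the ramified places: primes $p \in \BS$ where $\pi'_p$ is supercuspidal or Steinberg, primes $p \mid (N,D)$ where the test vector differs from the new vector of $\pi'_p$, and primes $p \mid c$ where $\chi_p$ is ramified. At each such $p$ one realizes the CST test vector concretely in $\pi'_p$, computes the local toric period against $\chi_p^{-1}$ together with the local $L$- and $\epsilon$-factors, and verifies that the ratio contributes exactly $\tfrac{1}{2}$ at each $p \mid (N, D)$---these accumulate to the factor $2^{-\mu(N,D)}$---while at all remaining ramified places the local constants cancel. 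The pairing $\langle f, f \rangle$ introduced in the statement, weighted by $w([x])^{-1}$, is precisely the restriction to $V(\phi, \chi)$ of the $L^2$-inner product on automorphic forms on $B^\times$ normalized by Tamagawa measure, which is what makes the Yuan--Zhang--Zhang identity take the displayed clean form.
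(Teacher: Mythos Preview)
The paper does not prove this theorem at all: it is stated with attribution to \cite{CST} and used as a black box in the subsequent computations (notably in Proposition~\ref{walds}). There is therefore no ``paper's own proof'' to compare your proposal against.

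That said, your outline is a faithful sketch of the strategy carried out in \cite{CST} itself (and ultimately in \cite{YZZ}): Jacquet--Langlands transfer to the definite $B$, identification of the test-vector line $V(\phi,\chi)$ via Tunnell--Saito and local multiplicity one, the Waldspurger/YZZ period identity, and then the explicit local computations at places dividing $N$, $D$, and $c$ that produce the factor $2^{-\mu(N,D)}$ and the constants $8\pi^2(\phi,\phi)_{\Gamma_0(N)}/(u^2 c\sqrt{|D|})$. One small caution: the attribution of individual pieces of the constant (e.g.\ ``the archimedean factor $8\pi^2/u^2$'') is not quite how the bookkeeping goes in \cite{CST}---the $u^2$ arises from the toric side (the order of $\CO_c^\times/\{\pm1\}$ in the period sum), not from the archimedean place, and the $8\pi^2$ comes from the conversion between $L(1,\pi,\ad)$ and the Petersson norm. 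But as an outline of the architecture of the argument, your proposal is correct; it simply belongs to \cite{CST} rather than to the present paper.
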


There is an analogue to Heegner points, the so called Gross points.
Let $S$ be a set of finite places of $\BQ$ containing all places
dividing $6cND$. Let $\BN_S$ denote the set of integers  whose prime divisors
are not in $S$.
\begin{defn}
Let $m \in \BN_S$. A point $x_m \in K^\times \bs \hat{B}^\times/ U$ is called a \textbf{Gross Point}
of conductor $cm$, if $x_mUx_m^{-1}\cap \hat{K}^\times=\wh{\CO}_{cm}^\times$.
\end{defn}
Each element in $K^\times\bs\wh{K}^\times/\wh{\CO}_{cm}^\times$ acts on $x_m$ by left multipication.
This induces an action of $\Gal(H_{cm}/K)$ on $x_m$, also called the Galois action.

For each prime $\ell\in\BN_S$,
fix an isomorphism $\beta_\ell:B_\ell \stackrel{\sim}{\ra} M_2(\BQ_\ell)$,
such that $\beta_\ell(U_\ell)=\GL_2(\BZ_{\ell})$, and, under this isomorphism, we have
\begin{itemize}
  \item $\beta_\ell(K_\ell)=\left\{\mat(a,0,0,b):a,b\in \BQ_\ell\right\}$, if $\ell$ is split in $K$;
  \item $\beta_\ell(K_\ell)=\left\{\mat(a,b\delta,b,a):a,b\in \BQ_\ell\right\}$, where
	  $\delta\in\BZ_{p}^\times\bs\BZ_{p}^{\times 2}$, if $\ell$ is inert in $K$.
\end{itemize}

For $m\in\BN_S$, define $x_m\in\wh{B}^\times$ by
 $$(x_m)_{\ell}=\left\{
     \begin{array}{ll}
       \beta_\ell^{-1}\mat(\ell^{\ord_\ell m},0,0,1) & \ell|m \\
       1 & \ell\nmid m
     \end{array}
   \right.
 $$
 Then the image of $x_m$ in $K^\times \bs \wh{B}^\times /U$, still denoted by $x_m$, is a Gross point of
 conductor $cm$.

\begin{thm}\label{Nr:Gr}
For any $\ell, m\in \BN_S$ with $\ell$ a prime and $\ell\nmid m$,  we have that
$$u_m \sum_{\sigma \in \Gal(H_{cm\ell}/H_{cm})} [\sigma.x_{m\ell}] =\begin{cases}
T_\ell [x_m], \qquad &\text{if $\ell$ is inert in $K$},\\
(T_\ell-\sum_{w|\ell} \Frob_w)[x_m], \qquad &\text{if $\ell$ is
split in $K$},
\end{cases}$$
where the equality holds as divisors on $X_U$, with $\Frob_w$ and $u_m$ the same as Theorem \ref{Nr:He}.
\end{thm}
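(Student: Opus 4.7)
The plan is to reduce the statement to a purely local computation at $\ell$, following the argument for Heegner points (Theorem \ref{Nr:He}) given in \cite{Ne} and \cite{Tian3}. The Gross points $x_m$ and $x_{m\ell}$ coincide at all primes $p\ne\ell$, and the Galois action of $\Gal(H_{cm\ell}/H_{cm})$ on $x_{m\ell}$ is realized by left multiplication by adelic elements of $\wh{\CO}_{cm}^\times$ trivial away from $\ell$. By class field theory,
\[
\Gal(H_{cm\ell}/H_{cm})=\ker\bigl(\Pic(\CO_{cm\ell})\twoheadrightarrow\Pic(\CO_{cm})\bigr),
\]
which is identified with the quotient of $\CO_{K_\ell}^\times/(\BZ_\ell+\ell\CO_{K_\ell})^\times$ by the image of the global units $\CO_{cm}^\times/\BZ^\times$; this group has order $(\ell+1)/u_m$ if $\ell$ is inert in $K$ and $(\ell-1)/u_m$ if $\ell$ is split.

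Next, using $\beta_\ell$, I would transport the problem to the set of $\BZ_\ell$-lattices in $\BQ_\ell^2$ via the standard bijection $\GL_2(\BQ_\ell)/\GL_2(\BZ_\ell)\cong\{\text{lattices}\}$, $g\mapsto g\Lambda_0$. Under the identification $\BQ_\ell^2\cong K_\ell$ induced by $\beta_\ell$, the base lattice $\Lambda_0=\BZ_\ell^2$ corresponds to $\CO_{K_\ell}$, and the Hecke correspondence $T_\ell[x_m]$ becomes, locally at $\ell$, the formal sum of the $\ell+1$ index-$\ell$ $\BZ_\ell$-sublattices of $\Lambda_0$. The optimal embedding condition $x_{m\ell}Ux_{m\ell}^{-1}\cap\wh{K}^\times=\wh{\CO}_{cm\ell}^\times$ defining $x_{m\ell}$ as a Gross point of conductor $cm\ell$ translates at $\ell$ into the assertion that the sublattice $\Lambda':=(x_{m\ell})_\ell\Lambda_0$ has stabilizer exactly $(\BZ_\ell+\ell\CO_{K_\ell})^\times$ in $\CO_{K_\ell}^\times$.

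The core of the proof is then an orbit-stabilizer analysis of the natural $\CO_{K_\ell}^\times$-action on the set of $\ell+1$ index-$\ell$ sublattices of $\Lambda_0$. In the inert case, $\CO_{K_\ell}$ is a DVR with residue field $\BF_{\ell^2}$, so no such sublattice is an $\CO_{K_\ell}$-submodule; consequently each sublattice has stabilizer $(\BZ_\ell+\ell\CO_{K_\ell})^\times$, and the $\CO_{K_\ell}^\times$-orbit of any one of them exhausts the entire set of $\ell+1$ sublattices. Hence $u_m\sum_\sigma[\sigma x_{m\ell}]$ matches $T_\ell[x_m]$ in $X_U$ after accounting for the collapse coming from the global units $\CO_{cm}^\times/\BZ^\times$. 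In the split case, exactly two index-$\ell$ sublattices are $\CO_{K_\ell}$-submodules, namely the primes $\fp_1,\fp_2$ above $\ell$, and these are fixed by the $K_\ell^\times$-action; local class field theory identifies them with $\Frob_{w_1}[x_m]$ and $\Frob_{w_2}[x_m]$, since each $\Frob_{w_i}$ acts via a uniformizer of $K_{w_i}$ that sends $\Lambda_0$ to $\fp_i\CO_{K_\ell}$. The remaining $\ell-1$ sublattices form a single $\CO_{K_\ell}^\times$-orbit with stabilizer $(\BZ_\ell+\ell\CO_{K_\ell})^\times$, which is precisely the orbit of $\Lambda'$, yielding $(T_\ell-\Frob_{w_1}-\Frob_{w_2})[x_m]$ on the right-hand side.

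The main obstacle is making the class field theory identification in the split case precise enough to match each of the two $K_\ell^\times$-fixed sublattices with the correct Frobenius class $\Frob_{w_i}[x_m]$; the rest of the argument is careful bookkeeping of stabilizers, orbit sizes, and the factor $u_m$ arising from global units. I expect no further serious difficulty once the lattice dictionary and Frobenius identification are securely in place.
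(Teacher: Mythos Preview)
Your proposal is correct and aligns with the paper's approach: the paper does not give an independent argument but simply states that the proof is the same as for the Heegner-point norm relation and refers to \cite[Proposition 4.8]{Ne} and \cite[Theorem 3.1.1]{Tian3}, which is precisely the local lattice/orbit-stabilizer argument you have sketched. Your write-up is in fact more detailed than what the paper provides.
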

The proof is the same as the norm relation of Heegner points on Shimura curves. One can refer to \cite[Proposition 4.8]{Ne} or \cite[Theorem 3.1.1]{Tian3}.

\section{\bf Quadratic Twists of $X_0(36)$}

The modular curve $X_0(36)$ has genus one and its cusp $[\infty]$ is
rational over $\BQ$ so that $E=(X_0(36), [\infty])$ is an elliptic
curve defined over $\BQ$. The elliptic curve $E$ has CM by
$\BQ(\sqrt{-3})$ and has minimal Weierstrass equation
$$\qquad y^2=x^3+1.$$
Note that its Tamagawa numbers are $c_2=3, c_3=2$ and $E(\BQ)\cong
\BZ/6\BZ$ is generated by the cusp $[0]=(2, 3)$, we use $T$ to denote the non-trivial 2-torsion point in the following. Denote by
$L^\alg(E,s)$ the algebraic part of $L(E,s)$. Then
$L^\alg(E,1)=1/6$.

For a non-zero integer $m$, let $E^{(m)}: y^2 = x^3 + m^3$
the quadratic twist of $E$ by
the field $\BQ(\sqrt{m})$. Then $E^{(m)}$ and $E^{(-3m)}$ are 3-isogenous
to each other.

\begin{lem}Let $D\in \BZ$ be a fundamental discriminant of a
quadratic field. Then the sign
for the functional equation of $E^{(D)}$, denoted by $\epsilon(E^{(D)})$, is
\[(-1)^{\#\left\{ p|D, p = 2,3,\infty \right\}}\]
where $\infty|D$ means that $D < 0$.
\end{lem}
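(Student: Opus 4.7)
The plan is to decompose the global sign as $\epsilon(E^{(D)})=\epsilon_\infty\prod_p\epsilon_p(E^{(D)})$ and analyze each local factor. At the archimedean place, $\epsilon_\infty=-1$ for every elliptic curve over $\BQ$, and at every finite prime $p\nmid 6D$ the curve $E^{(D)}:y^2=x^3+D^3$ has good reduction, so $\epsilon_p(E^{(D)})=+1$. Thus only $\infty$, the primes $2$ and $3$, and odd primes $p\mid D$ with $p\neq 3$ can contribute nontrivially.

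For an odd prime $p\mid D$ with $p\neq 3$, the curve $E$ has good reduction at $p$ and the twist character $\chi_{D,p}$ is ramified quadratic of conductor $p$. The standard formula for the root number of an unramified two-dimensional representation twisted by a ramified quadratic character (using $\tau(\chi_{D,p})^2=p\,\chi_{D,p}(-1)$) gives $\epsilon_p(E^{(D)})=\chi_{D,p}(-N)=\chi_{D,p}(-1)$, since $N=36$ is a unit square modulo any such $p$. Combining these tame contributions with $\epsilon_\infty$ and invoking the global product formula $\prod_v\chi_{D,v}(-1)=1$, one obtains
\[
\epsilon_\infty\!\!\prod_{\substack{p\mid D\\ p\neq 2,3}}\!\!\epsilon_p(E^{(D)})
\;=\;-\sign(D)\cdot\chi_{D,2}(-1)^{[2\mid D]}\chi_{D,3}(-1)^{[3\mid D]}.
\]

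It remains to compute $\epsilon_2(E^{(D)})$ and $\epsilon_3(E^{(D)})$ directly. Because $E$ already has additive potentially good reduction at both $2$ and $3$ (conductor $36$), the calculation splits into the four cases determined by whether $2\mid D$ and whether $3\mid D$. In each case one runs Tate's algorithm on the model $y^2=x^3+D^3$ at the relevant prime to obtain the Kodaira type, conductor exponent, and local Weil--Deligne parameter, and then reads off the local root number (using Rohrlich's tables for the potentially good case at $p=3$ and an explicit Weil--Deligne analysis at $p=2$). In every congruence class the local factor at $\{2,3\}$ cancels the $\chi_{D,p}(-1)$ correction above and produces an additional sign $-1$ precisely when $p\mid D$, yielding the claimed formula $(-1)^{\#\{p\in\{2,3,\infty\}:p\mid D\}}$.

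The main obstacle will be the analysis at $p=2$: since $E$ has wildly ramified additive reduction at $2$ and the quadratic twist at $2$ depends on $D\bmod 16$, the Weil--Deligne parameter of $E^{(D)}$ can transform in several different ways depending on the $2$-adic congruence class of $D$. One must verify in each wild subcase that the resulting local root number contributes exactly the single sign $(-1)^{[2\mid D]}$ predicted by the lemma.
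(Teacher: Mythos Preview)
Your approach is in principle workable but leaves the decisive computation undone: you reduce everything to the local root numbers at $2$ and $3$ and then assert that ``in every congruence class the local factor at $\{2,3\}$ cancels the $\chi_{D,p}(-1)$ correction\ldots'', without actually performing the case analysis. As you yourself flag, the wild case at $p=2$ splits into several $D\bmod 16$ subcases, and the supercuspidal at $p=2$ for $E$ has conductor $4$, so Rohrlich's tame tables do not directly apply; one genuinely has to identify the Weil--Deligne parameter of $E^{(D)}$ at $2$ in each subcase. Until that is written out, the argument is a plan rather than a proof.

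The paper sidesteps this entirely by a different device. It writes $L(s,E_K)=L(s,E)L(s,E^{(D)})$ for $K=\BQ(\sqrt{D})$, so that $\epsilon(E^{(D)})=\epsilon(E_K)$ (since $\epsilon(E)=+1$). The sign of the base-change $L$-function is then a product of local base-change root numbers, and at $p=2,3$ the local component of $\pi_E$ is supercuspidal of conductor exponent $2$; Tunnell's local result (\cite[Proposition~3.5]{T}) then says the base-change root number at $p\in\{2,3\}$ is $-1$ exactly when $p\mid D$. At $\infty$ it is $-1$ exactly when $D<0$, and at all other places it is $+1$. This yields the formula in one stroke, with no Tate-algorithm casework and no wild $2$-adic analysis. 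If you want to salvage your route, you could replace the missing $p=2,3$ computation by this base-change argument; otherwise you must actually carry out the local root-number calculation in each congruence class.
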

\begin{proof}
	For each $D$, denote by $K = \BQ(\sqrt{D})$, then
	\[L(s,E_K) = L(s,E)L(s,E^{(D)})\]
	where $L(s,E_K)$ is the base change $L$-function and it suffices to
	determine the sign of $L(s,E_K)$. Note that the local
	components of the cuspidal automorphic representation for $E$ at
	places $2$ and $3$ are supercuspidal with conductor $2$, then by
	\cite[Proposition 3.5]{T}, the local root number for the base
	change $L$-function at places $2$ (resp. $3$) is negative if and only if
	$2 | D$ (resp. $3 | D$). Meanwhile, the local root number at $\infty$ is positive
	if and only if $D$ is positive and for any place not dividing $6\infty$ it is
	positive. Summing up, the result holds.
\end{proof}

\subsection{The Waldspurger Formula}
Let $B$ be the definite quaternion algebra over $\BQ$ ramified at $3,\infty$, then we know that
$$B=\BQ+\BQ i+\BQ j+\BQ k,\quad i^2=-1,j^2=-3,k=ij=-ji. $$
Let $\CO_B=\BZ[1,i,(i+j)/2,(1+k)/2]$ of $B$.
The unit group $\CO_B^\times$ of $\CO_B$ equals to
\[\left\{ \pm 1, \pm i, \pm (i+j)/2, \pm (i-j)/2, \pm (1+k)/2,
\pm (1-k)/2\right\}.\]
Let $K = \BQ(\sqrt{-3})$ and $\eta: \wh{\BQ}^\times/\BQ^\times\to\{\pm 1\}$ is the quadratic character associated to $K$.
Embed $K\hookrightarrow B$ by sending $\sqrt{-3}$ to $k$,
which induces an embedding $\wh{K}^\times\hookrightarrow\wh{B}^\times$.

Let $\pi = \otimes_v \pi_v$ be the automorphic representation of $B_\BA^\times$ corresponding to $E$ via the modularity of $E$
and the Jacquet-Langlands correspondence.
Let $\CR = \prod_p \CR_p$ be an order of $\wh{B}^\times$ defined as following.
If $p = 2$, then $\CR_2 = \CO_{K,2} + 2\CO_{B,2}$. If $p = 3$, then $\CR_3 = \CO_{K,3} + \lambda\CO_{B,3}$ where
$\lambda \in B^\times$ is a uniformizer of $B_3$;
for example, we may choose $\lambda=k$, which is also a uniformizer of $K_3$. For $p \not| 6$, $\CR_p = \CO_{B,p}$.
Denote by $U = \CR^\times$. Then $U$ is an open compact subgroup of $\wh{B}^\times$.

The local components of $\pi$ have the following properties:
\begin{itemize}
  \item $\pi_\infty$ is trivial;
  \item $\pi_p$ is unramified if $p\neq 2,3,\infty$, i.e. $\pi^{\CO_{B,p}^\times}$ is one dimensional;
  \item $\pi_2^{\CO_{K,2}^\times}$ is one dimensional and $\pi_3^{\CO_{K,3}^\times}$ is two dimensional.
\end{itemize}
The first two properties are standard, while the last property comes from \cite[proposition 3.8]{CST}.
Then $\pi^U$ is a representation of $B_3^\times$ with dimension $2$.
As $K_3^\times$-modules, $\pi^U = \BC \chi_+ \oplus \BC \chi_-$
where $\chi_+$ is the trivial character of $K_3^\times$ and $\chi_-$ is the nontrivial quadratic unramified character on $K_3^\times$.

This representation $\pi^U$ is naturally realized as a subspace of the space of the
infinitely differentiable complex-valued functions $C^\infty(B^\times \bs \wh{B}^\times/\wh{\BQ}^\times)$.
The space $\pi^U$ is contained in the space $C^\infty(B^\times \bs \wh{B}^\times/ \wh{\BQ}^\times U)$
and is perpendicular to the spectrum consisting of characters (the residue spectrum).
In fact, we have the following more detailed proposition:

\begin{prop}\label{basis}
\begin{enumerate}
  \item $\pi^U$ has an orthonormal basis $f_+$, $f_-$  under the Petersson inner product defined by
\[\parallel f\parallel^2 = \int_{B^\times\bs \wh{B}^\times/\wh{\BQ}^\times } |f(g)|^2dg\]
with the Tamagawa measure $\Vol(B^\times\bs \wh{B}^\times/\wh{\BQ}^\times) = 2$.
  \item Moreover, $f_+$ (resp. $f_-$) is the function on $B^\times \bs \wh{B}^\times/\wh{\BQ}^\times U$,
	  supported on those $g \in \wh{B}^\times$ with $\chi_0(g) = +1$ (resp. $=-1$),
	  valued in $0, \pm 1$ with total mass zero, where $\chi_0$ is the composition of
	  the following morphisms:
	  \[B^\times\bs\wh{B}^\times/\wh{\BQ}^\times U \stackrel{\det}{\lra}   \wh{\BQ}^\times \stackrel{\eta}{\lra}   \pm 1.\]
  \item For any $t \in K_3^\times$, $\pi(t)f_+=\chi_+(t)f_+$ and $\pi(t)f_-=\chi_-(t)f_-$.
\end{enumerate}
\end{prop}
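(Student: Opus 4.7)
The plan is to realize $\pi^U$ explicitly as a subspace of the finite-dimensional space $C(X_U) := C(B^\times\bs\wh{B}^\times/\wh{\BQ}^\times U;\BC)$, locate the $K_3^\times$-eigenlines within it, and exploit compatibility with the quadratic character $\chi_0$ to pin down their supports. The first task is to enumerate the Shimura set $X_U$ together with the stabilizer weights $w([x])$. Since $B$ is definite with ramification exactly $\{3,\infty\}$, $\CO_B$ has a small class number, and the prescribed local structure of $\CR$ at $2$ and $3$ imposes only finitely many additional local classes; a direct local-global computation yields $X_U$ as a small explicit finite set, whose size can be cross-checked against the dimension of the relevant space of modular forms of level $36$ via Jacquet-Langlands.

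Next, I would decompose $C(X_U) = V_{\mathrm{res}} \oplus V_{\mathrm{cusp}}$ under the Petersson pairing, where $V_{\mathrm{res}} = \BC\mathbf{1}\oplus\BC\chi_0$ is spanned by the two one-dimensional representations of $\wh{B}^\times/\wh{\BQ}^\times U$ factoring through the reduced norm (one verifies locally at each prime that $\eta$ is trivial on $\det U_p$, so $\chi_0$ descends to $X_U$). The given Hecke data of $\pi$ identifies $\pi^U$ as the unique two-dimensional subspace of $V_{\mathrm{cusp}}$ on which the unramified Hecke operators $T_p$ act by $a_p$ for all $p\nmid 6$. The crucial observation is that $\chi_0|_{K_3^\times} \equiv 1$: by local class field theory, $\Im(\Nr_{K_3/\BQ_3}) = \ker\eta_3$, so $\chi_0(t) = \eta_3(\Nr_{K_3/\BQ_3}(t)) = 1$ for every $t\in K_3^\times$. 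Consequently, the pointwise-multiplication-by-$\chi_0$ operator on $C(X_U)$ commutes with the right translation action of $K_3^\times$ and therefore preserves both $K_3^\times$-eigenlines of $\pi^U$.

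Picking $g_\pm\in \pi^U$ with $\pi(t)g_\pm = \chi_\pm(t)g_\pm$ for $t\in K_3^\times$, one finds $\chi_0\cdot g_\pm = c_\pm g_\pm$ with $c_\pm \in \{\pm 1\}$. That $c_+ \neq c_-$, which I expect to be the subtlest point, is to be verified directly on the explicit enumeration of $X_U$ above; conceptually, the asymmetry between the trivial $\chi_+$ and the nontrivial ramified $\chi_-$ should force $g_+$ and $g_-$ to lie in different $\chi_0$-components. After relabelling so that $f_+ := g_+$ is supported on $X_+ := \chi_0^{-1}(+1)$ and $f_- := g_-$ on $X_- := \chi_0^{-1}(-1)$, the orthogonality $\langle f_+,f_-\rangle = 0$ is immediate from disjoint supports, the total-mass-zero property follows from $f_\pm \perp \mathbf{1}$ (since $\mathbf{1}\in V_{\mathrm{res}}$ and $\pi^U\subset V_{\mathrm{cusp}}$), and (3) is built into the construction. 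The value-in-$\{0,\pm 1\}$ and orthonormality statements then reduce to a direct computation using the stabilizer weights $w([x])$ and the Tamagawa normalization $\Vol(B^\times\bs\wh{B}^\times/\wh{\BQ}^\times) = 2$. The main obstacle I foresee is the combinatorial bookkeeping of $X_U$ at the ramified primes $2,3$; once this is settled, all remaining verifications are routine linear algebra on a small finite set.
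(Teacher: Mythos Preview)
Your outline has a genuine logical gap at the step ``multiplication by $\chi_0$ commutes with the $K_3^\times$-action, and therefore preserves both $K_3^\times$-eigenlines of $\pi^U$.'' Commuting with $K_3^\times$ only tells you that $\chi_0$-multiplication preserves the $\chi_\pm$-eigenspaces inside the whole of $C(X_U)$, not those inside $\pi^U$; you must first know that multiplication by $\chi_0$ sends $\pi^U$ to itself. This is not automatic: for $p\nmid 6$ the operator $f\mapsto \chi_0 f$ intertwines $T_p$ with $\eta(p)T_p$, so a priori it carries $\pi^U$ to the $U$-invariants of the twist $\pi\otimes\eta$. You can repair this in two ways. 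Either observe that $E$ has CM by $\BQ(\sqrt{-3})$, so $\pi\otimes\eta\cong\pi$ and the map lands back in $\pi^U$; or, more in line with your plan, once your enumeration yields $|X_U|=4$ you get $V_{\mathrm{cusp}}=\pi^U$, and since $\chi_0$-multiplication visibly preserves $V_{\mathrm{res}}=\mathrm{span}(1,\chi_0)$ it preserves the orthogonal complement as well. With that in place, your dimension count also gives $c_+\neq c_-$ for free: if both eigenlines sat in the same fibre $X_c$ (which has two points), the conditions $f\perp 1$ and $f\perp\chi_0$ collapse to a single linear constraint on a two-dimensional space, contradicting $\dim\pi^U=2$.

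For comparison, the paper takes a more direct route that bypasses the $\chi_0$-compatibility argument entirely. Using that $\CO_B$ has class number one it reduces the Shimura set to a double coset at the primes $2$ and $3$, shows the contribution at $2$ is trivial, and identifies $X_U$ canonically with the cyclic group $\BZ_9^\times/\mu_2(1+3\BZ_9)\cong\BZ/4\BZ$. Then $C(X_U)=\BC[\BZ/4\BZ]$, the residual spectrum consists of the characters of order dividing $2$, and $\pi^U$ is spanned by the two characters of exact order $4$. The $\varpi_3$-action is computed by the trick of writing $k=ij$ as an element of $H\cap U_2$: left-translation by $k$ is trivial while its $3$-component acts on $\BZ_9$ by Frobenius, giving $\pi(\varpi_3)f(\zeta^a)=f(\zeta^{3a})$. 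This makes $f_\pm$ explicit without ever invoking $\chi_0$ until the very end, where one simply reads off $\chi_0(\zeta^a)=(-1)^a$. Your approach is more conceptual but defers the same amount of computation to the enumeration step; the paper's group-theoretic identification of $X_U$ is what makes all the verifications (values in $\{0,\pm1\}$, orthonormality, supports) immediate.
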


Since the class number of $B$ with respect to $\CO_B$ is $1$ by \cite[pp. 152]{V}, one has
\[\wh{B}^\times = B^\times\wh{\CO}_B^\times = B^\times B_3^\times \wh{\CO}_B^{\times(3)}.\]
Therefore,
\[
   B^\times \bs \wh{B}^\times/ \wh{\BQ}^\times U =
   B^\times \bs B^\times B_3^\times\wh{\CO}_B^{\times(3)} /U_2U_3\wh{\CO}_B^{\times(6)}
    = H \bs B_3^\times\CO_{B,2}^\times /U_2U_3,
\]
where $H=B^\times \cap B_3^\times
\wh{\CO}_B^{\times(3)}=\CO_B^\times \lambda^\BZ\subset B_3^\times\CO_{B,2}^\times$
and the last inclusion is given by the diagonal embedding.

\begin{lem}
The double coset $H\bs \CO_{B,2}^\times/U_2$ is trivial  and
$H\cap U_2 \bs B_3^\times/U_3 = \CO_{B,3}^\times/U_3$.
\end{lem}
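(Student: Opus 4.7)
The plan is to reduce both statements to concrete local computations at $p=2$ and $p=3$. For the first claim, I will begin by computing the index $[\CO_{B,2}^\times : U_2]$. Since $B$ is split at $2$, I can identify $\CO_{B,2}\simeq M_2(\BZ_2)$, and reduction modulo $2$ identifies $\CR_2/2\CO_{B,2}$ with the image of $\CO_{K,2}/2\CO_{K,2}\simeq \BF_4$ inside $M_2(\BF_2)$; since $1+2\CO_{B,2}\subset U_2$, this yields $[\CO_{B,2}^\times : U_2]=[\GL_2(\BF_2):\BF_4^\times]=2$. The image of $H$ in $\CO_{B,2}^\times$ under the diagonal embedding already contains $\CO_B^\times$, so it suffices to exhibit one element of $\CO_B^\times$ not in $U_2$. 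Writing $\CO_{B,2}/2\CO_{B,2}$ on the $\BF_2$-basis $\{1,i,(i+j)/2,(1+k)/2\}$, the image of $\CO_{K,2}$ is the $\BF_2$-subspace spanned by $1$ and $(1+k)/2$; in particular $i\notin\CR_2$, giving (1).

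For (2) I will first pin down $H\cap U_2$ explicitly. Since $\lambda=k$ has reduced norm $3\in\BZ_2^\times$ and lies in $\CO_{K,2}\subset\CR_2$, one has $\lambda\in U_2$; thus $\epsilon\lambda^n\in H$ lies in $U_2$ iff $\epsilon\in\CO_B^\times\cap U_2$. The same basis calculation as above lists the units of $\CO_B$ lying in $\CR_2$: they are $\{\pm 1,\pm(1+k)/2,\pm(1-k)/2\}$, which is precisely $\CO_K^\times=\{\pm 1,\pm\omega,\pm\omega^2\}$. Hence $H\cap U_2=\CO_K^\times\lambda^\BZ\subset K^\times$.

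Next I will study the left action of $\CO_K^\times\lambda^\BZ$ on $B_3^\times/U_3$. Because $B_3$ is a division algebra with maximal order $\CO_{B,3}$ and $\lambda$ is a uniformizer, $B_3^\times=\lambda^\BZ\CO_{B,3}^\times$, so the $\lambda^\BZ$-factor acts by shifting the valuation and produces $\lambda^\BZ\bs B_3^\times/U_3=\CO_{B,3}^\times/U_3$. The remaining step, which I expect to be the main obstacle, is to show that $\CO_K^\times$ acts trivially on $\CO_{B,3}^\times/U_3$, equivalently that $x^{-1}\CO_K^\times x\subset U_3$ for every $x\in\CO_{B,3}^\times$. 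The key observation is that the minimal polynomial $X^2+X+1$ of $\omega$ factors as $(X-1)^2$ mod $3$; concretely $\omega-1=(\lambda^2+\lambda)/2\in\lambda\CO_{K,3}$, so $\omega\equiv 1\pmod{\lambda\CO_{B,3}}$, and hence $\CO_K^\times\subset\{\pm 1\}\cdot(1+\lambda\CO_{B,3})$. Since $\lambda\CO_{B,3}$ is the unique maximal two-sided ideal of $\CO_{B,3}$, the subgroup $1+\lambda\CO_{B,3}$ is normal in $\CO_{B,3}^\times$ and is contained in $\CR_3^\times=U_3$; the center $\{\pm 1\}$ also lies in $U_3$. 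Thus all $\CO_{B,3}^\times$-conjugates of elements of $\CO_K^\times$ remain in $U_3$, giving the required triviality and completing (2).
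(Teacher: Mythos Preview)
Your proof is correct and follows essentially the same approach as the paper's: both reduce part (1) to a computation in $\GL_2(\BF_2)$ (you via the index-$2$ observation, the paper by directly covering $\GL_2(\BF_2)$ with $H\cdot\CO_{K,2}^\times$), and for part (2) both identify $H\cap U_2=\langle k,-1,(1+k)/2\rangle=\CO_K^\times\lambda^{\BZ}$ and then show that the $\CO_K^\times$-factor acts trivially on $B_3^\times/U_3$ because its elements lie in $\{\pm1\}(1+\lambda\CO_{B,3})\subset U_3$, which is normal in $\CO_{B,3}^\times$. The paper phrases this last step as the explicit conjugation $x^{-1}(1+\lambda)x=1+x^{-1}\lambda x\in U_3$, which is exactly your normality argument unwound.
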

\begin{proof}
  The proof is elementary. Firstly, we prove that $H \bs \CO_{B,2}^\times/U_2$ is trivial.
  Recall that  $U_2 = \CO_{K,2}^\times(1+2M_2(\BZ_2))$.
  As $\GL_2(\BZ_2)/(1+2M_2(\BZ_2)) = \GL_2(\BF_2)$,  the claim follows from
  that for any $g \in \GL_2(\BF_2)$, one may find $h \in H$ and $u \in \CO_{K,2}^\times$
  such that $g  \equiv hu \pmod{2\BZ_2}$.  For the second claim, note that
  \[H \cap U_2 = \langle k,-1,\frac{1+k}{2} \rangle, \]
  For any $x \in B_3^\times$, $x^{-1}(1+ \lambda)x = 1 + x^{-1}\lambda x \in U_3$
  where $\lambda$ is any uniformizer of $B_3^\times$.
  In particular, the action of $H\cap U_2$ on $B_3^\times/U_3$ is equal to the action of the group generated
  by some uniformizer. Hence
  $H \cap U_2 \bs B_3^\times/U_3 = H\cap U_2 \bs (B_3^\times/U_3)  = \CO_{B,3}^\times/U_3$.
\end{proof}

If we denote $\BZ_9$ the integer ring for the unramified quadratic extension field of $\BQ_3$, then
\[
  \CO_{B,3}^\times = \BZ_9^\times(1+\lambda\BZ_9); U_3 = \CO_{K,3}^\times(1+\lambda\CO_{B,3}) =
\mu_{2}(1+3\BZ_9)(1+\lambda\BZ_9)\]
where $\mu_2 = \{\pm 1\}$.
Hence
\[\begin{aligned}
    H\bs B_3^\times\CO_{B,2}^\times/U_2U_3 &\stackrel{\sim}{\lla} H\cap U_2 \bs B_3^\times /U_3 \\
    &\stackrel{\sim}{\lla} \CO_{B,3}^\times/U_3 \\
    &\stackrel{\sim}{\lla} \BZ_9^\times/\mu_{2}(1+3\BZ_9) \cong \BZ/4\BZ,
  \end{aligned}\]
and we can identify $C^\infty(B^\times \bs \wh{B}^\times/\wh{\BQ}^\times U)$ with $\BC[\BZ/4\BZ]$.


The image of $B^\times \bs \hat{B}^\times/ U$ under the norm map is $\BQ_+^\times \bs \hat{\BQ}^\times/\Nr U$.
If $p \not = 3$, $\Nr U_p = \BZ_p^\times$ while if  $\Nr U_3 = 1+3\BZ_3$. Therefore,
by the approximation theorem,
\[\BQ_+^\times \bs \hat{\BQ}^\times/\Nr U = \BZ_3^\times/\Nr U_3 = \BZ_3^\times/1+3\BZ_3.\]
In paticular, the cardinality of $\BQ_+^\times\bs \hat{\BQ}^\times/\Nr U$ is $2$.
Forms in $C^\infty(B^\times \bs \wh{B}^\times/\wh{\BQ}^\times U)$
of the form $\mu\circ \Nr$ for some Hecke character $\mu$
correspond to characters on $\BC[\BZ/4\BZ]$ of order dividing $2$. Sum up, we obtain
\begin{lem}
  There is a natural bijection
  \[\BZ_9^\times/\mu_2(1+3\BZ_9) \stackrel{\sim}{\lra} B^\times \bs \wh{B}^\times/\wh{\BQ}^\times U\]
  which is induced by the embedding $\BZ_9^\times \ra B_3^\times \ra \wh{B}^\times$,
  and the left hand side of the above
  bijection is isomorphic to the cyclic group of order $4$. Via this bijection,
  the space $\pi^U$ is spanned by characters on the cyclic group with order not dividing $2$.
\end{lem}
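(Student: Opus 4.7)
The plan is to combine the chain of identifications from the previous lemma with a count of the residual spectrum, and then use dimension counting on $\pi^U$ to pin down which characters of the cyclic group span $\pi^U$.

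First, I would assemble the bijection. The preceding lemma already identifies $B^\times \bs \wh{B}^\times/\wh{\BQ}^\times U$ with $H\bs B_3^\times \CO_{B,2}^\times/U_2U_3$, and this in turn with $\BZ_9^\times/\mu_2(1+3\BZ_9)$ via the embedding $\BZ_9^\times\hookrightarrow \CO_{B,3}^\times \hookrightarrow B_3^\times$. Tracing through this chain, the composite map is exactly induced by the embedding $\BZ_9^\times \to B_3^\times \to \wh{B}^\times$, as claimed.

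Next, I would verify the left-hand side is cyclic of order $4$. Since $\BZ_9^\times/(1+3\BZ_9) \cong \BF_9^\times$ is cyclic of order $8$, quotienting further by the image of $\mu_2 = \{\pm 1\}$ (a subgroup of order $2$) yields a cyclic group of order $4$. This identifies $C^\infty(B^\times\bs\wh{B}^\times/\wh{\BQ}^\times U)$ with $\BC[\BZ/4\BZ]$, spanned by the four characters of $\BZ/4\BZ$.

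Finally, I would identify the characters spanning $\pi^U$. The previous two-paragraph discussion shows: (i) by Proposition \ref{basis}, $\dim \pi^U = 2$ and $\pi^U$ is perpendicular to the residue spectrum (those forms arising as $\mu\circ\Nr$ for a Hecke character $\mu$); (ii) the image of $\Nr:B^\times\bs\wh{B}^\times/U \to \BQ^\times_+\bs\wh{\BQ}^\times/\Nr U$ has cardinality $2$, so the pullbacks $\mu\circ\Nr$ correspond precisely to characters of $\BZ/4\BZ$ of order dividing $2$ (i.e., the trivial character and the unique character of order $2$). Since $\pi^U$ is two-dimensional and orthogonal to this two-dimensional residue subspace inside the four-dimensional $\BC[\BZ/4\BZ]$, it must be spanned by the two remaining characters, namely the two characters of order exactly $4$. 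No serious obstacle is expected; the only subtle point is checking that the trivial character on the cyclic group of order $4$ indeed corresponds to the trivial Hecke character pulled back via $\Nr$ and that the unique order-two character corresponds to the nontrivial quadratic character on $\BQ^\times_+\bs\wh{\BQ}^\times/\Nr U$, which follows directly from unraveling the norm map through the bijection established in step one.
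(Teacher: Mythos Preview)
Your proposal is correct and follows essentially the same approach as the paper: the bijection is assembled from the chain of identifications already displayed (using the previous lemma to reduce to $\CO_{B,3}^\times/U_3$ and then to $\BZ_9^\times/\mu_2(1+3\BZ_9)$), the cyclic-of-order-$4$ claim comes from $\BF_9^\times \cong \BZ/8\BZ$ modulo $\mu_2$, and the identification of $\pi^U$ with the two characters of order exactly $4$ follows by orthogonality to the two-dimensional residue spectrum inside the four-dimensional $\BC[\BZ/4\BZ]$. One small note: the fact that $\dim\pi^U = 2$ and that $\pi^U$ is orthogonal to the residue spectrum is established in the text preceding Proposition~\ref{basis} (from the listed properties of the local components of $\pi$), not in that proposition itself, so adjust your reference accordingly.
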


Since $\CO_{K,3}^\times \subset U_3$, $f$ is $\chi_\pm$-eigen if and only if $\pi_3(\varpi_3)f = \pm f$,
if and only if $f(\zeta^a\varpi_3) = \pm f(\zeta^a)$ for $a = 0,\cdots,3$
where $\zeta$ is a primitive $8$th root of unity in $\BZ_9^\times$.
Moreover, we may assume $\zeta \equiv 1+i \pmod{1+3\BZ_9}$.

To compute $f(\zeta^a\varpi_3)$, since $k \in H\cap U_2$ and $f \in \pi^U$, we have
\[f(\zeta^a\varpi_3) = f(k^{-1}\zeta^a\varpi_3) = f(k_3^{-1}\zeta^a\varpi_3).\]
where $k_3$ denote the $3$-component of $k$.

Take $\varpi_3 = \sqrt{-3} \in K_3^\times$. Then
\[f(k_3^{-1}\zeta^a\varpi_3) = f(k_3^{-1}\zeta^a k_3) = f(\zeta^{3a}), \quad a \in \BZ/4\BZ\]
because the conjugate action of $k_3$ on $\BZ_9$ is the Galois conjugation. Thus
\[\pi(\varpi_3)f(\zeta) = f(\zeta^3), \quad \pi(\varpi_3)f(\zeta^a) = f(\zeta^a), \quad \text{if } 2a = 0.\]
Thus, one may take $f_+$ and $f_-$ by
$$\begin{array}{l}
  f_+(1) = 1, \quad f_+(\zeta^2) = -1, \quad f_+(\zeta) = f_+(\zeta^3) = 0 \text{ and }\\
  f_-(\zeta) = 1, \quad f_-(\zeta^3) = -1, \quad f_-(1) = f_-(\zeta^2) = 0.
\end{array}
$$

Finally, $\chi_0$ is the non-trivial element in the residue spectrum of $C^\infty(B^\times \bs \wh{B}^\times/\wh{\BQ}^\times U)$
and $\chi_0(\zeta^a) = (-1)^a$ for $a = 0,\cdots,3$.
Thus, up to $\pm 1$,  $f_+$ (resp. $f_-$) is the
function on $B^\times \bs \wh{B}^\times/\wh{\BQ}^\times U$,
supported on those $g \in \wh{B}^\times$ with $\chi_0(g) = +1$ (resp. $=-1$),
valued in $0, \pm 1$ with total mass zero. It is clear that
$f_+$ and $f_-$ is an orthonormal basis of $\pi^U$. We have
completed the proof of Proposition \ref{basis}.
\vspace*{5mm}

Now let $M = q_1\ldots q_r$ with $q_i \equiv 5 \pmod{12}$.
For any $q|M$, taking an isomorphism
$\iota_{q}: B_{q} \stackrel{\sim}{\ra} M_2(\BQ_{q})$ by
$i \mapsto\mat(1,0,0,-1)$ and $k \mapsto\mat(0,3,1,0)$.
In particular,$\iota_{q}(\CO_{B,q}) = M_2(\BZ_{q})$.
Denote by $x_{q} \in B_{q}^\times$ with
$\iota_{q}(x_{q}) =
\begin{pmatrix}
  q & \\
    & 1
  \end{pmatrix}$. Then $x_{q} \CO_{B,q} x_{q}^{-1} \cap K_{q} = \CO_{M,q}$.
Take $x_M = \prod_i x_{q_i} \in \wh{B}^\times$. Denote by
\[f_M =
  \begin{cases}
     f_+(\cdot x_M), \quad \text{if $r$ is even}; \\
     f_-(\cdot x_M), \quad \text{if $r$ is odd}.
 \end{cases}\]
Let $\chi_M$ be the quadratic Hecke character of $K$ associated to $K(\sqrt{M})/K$,
then $\chi_M(\varpi_3)=(-1)^r$. Then $f_M$ is in the line $V(\pi,\chi_M)$ defined in
Section \ref{section-walds}.
In particular, it satisfies that
\begin{enumerate}
  \item $\forall p\nmid 6M, T_pf_M=a_pf_M$;
  \item $f_M$ is integrable-valued with minimal norm;
  \item $\pi(\varpi_3)f_M=\chi_M(\varpi_3)f_M$.
\end{enumerate}

Let $H_M$ be the ring class field of $K$ of conductor $M$, i.e. the abelian extension of $K$
with Galois group $\Gal(H_M/K)\simeq\Pic(\CO_M)=\wh{K}^\times/K^\times\wh{\CO}^\times_M$.
The embedding $K \hookrightarrow B$ induces a map
\[K^\times \bs \wh{K}^\times / \wh{\CO}^\times_M \lra B^\times \bs \wh{B}^\times / U.\]
Consider
$$P_{\chi_M}(f_M)=\sum_{t\in\Pic(\CO_M)}f_M(t)\chi_M(t).$$
Denote by
\[L^\alg(s,E) = L(s,E)/\Omega(E)\]
where for any elliptic curve $A$ over $\BQ$, $\Omega(A)$ is the real period for the Neron differential of $A$; and for simplicity, we let $\Omega=\Omega(E)$; then the imaginary period of $E$ is $\Omega^-=\Omega/\sqrt{-3}$.

\begin{prop}\label{walds}
  Up to $\pm 1$, $L^\alg(1,E^{(M)}) = 2^{-1} P_{\chi_M}(f_M)$.
\end{prop}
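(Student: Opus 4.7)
The strategy is to specialize the explicit Waldspurger formula (Theorem~\ref{cst-wald}) to $(\phi,K,\chi_M)$ and unpack both sides. With $N=36$, $c=M$ and $|D|=3$, we have $\mu(N,D)=1$ (only $3$ divides $(N,D)$) and $u=[\CO_M^\times:\{\pm 1\}]=1$ (since $M$ is coprime to $6$ and $M>1$). The formula reads
\[
L(1,\phi,\chi_M)=\frac{4\pi^{2}(\phi,\phi)_{\Gamma_{0}(36)}}{M\sqrt{3}}\cdot\frac{|P_{\chi_M}(f_M)|^{2}}{\langle f_M,f_M\rangle}.
\]
The Petersson norm is evaluated using the fact that the modular parametrization $X_{0}(36)\to E$ is the identity (degree one, Manin constant one) and the N\'eron lattice $\Omega(\BZ+\BZ\omega)$ of $E$ has area $\Omega^{2}\sqrt{3}/2$: the standard identity $4\pi^{2}(\phi,\phi)_{\Gamma_{0}(N)}=\deg(f)\cdot\vol(E(\BC))$ yields $(\phi,\phi)_{\Gamma_{0}(36)}=\Omega^{2}\sqrt{3}/(8\pi^{2})$.

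Next, the Rankin--Selberg $L$-function is factored into twists of $E$. The biquadratic extension $K(\sqrt M)/\BQ$ has Galois group $(\BZ/2)^{2}$ with nontrivial characters $\eta_{-3},\eta_M,\eta_{-3M}$; only the latter two restrict nontrivially to $\Gal(K(\sqrt M)/K)$, so $\Ind_K^\BQ\chi_M=\eta_M\oplus\eta_{-3M}$ and hence $L(s,\phi,\chi_M)=L(s,E^{(M)})L(s,E^{(-3M)})$. Twisting the standard $3$-isogeny $y^{2}=x^{3}+1\to y^{2}=x^{3}-27$ by $M$ gives a $\BQ$-isogeny $E^{(M)}\to E^{(-3M)}$, so $L(s,E^{(M)})=L(s,E^{(-3M)})$ and $L(1,\phi,\chi_M)=L(1,E^{(M)})^{2}$. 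Writing $L(1,E^{(M)})=L^{\alg}(1,E^{(M)})\cdot\Omega(E^{(M)})$ with $\Omega(E^{(M)})=\Omega/\sqrt M$ (from the coordinate change $(x,y)\mapsto(x/M,y/M^{3/2})$ over $\BQ(\sqrt M)$, which scales the invariant differential by $1/\sqrt M$), these identities combine to
\[
L^{\alg}(1,E^{(M)})^{2}=\frac{|P_{\chi_M}(f_M)|^{2}}{2\langle f_M,f_M\rangle}.
\]

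It remains to prove $\langle f_M,f_M\rangle=2$. Since $f_M(g)=f_\pm(g x_M)$ and $x_M$ conjugates $\wh R$ to another order of the same discriminant, right translation by $x_M$ is a bijection of Shimura sets that preserves the automorphism weights $w([x])=|(B^\times\cap xUx^{-1})/\{\pm 1\}|$; therefore $\langle f_M,f_M\rangle=\langle f_\pm,f_\pm\rangle$. By Proposition~\ref{basis}, $f_\pm$ is $\{0,\pm 1\}$-valued with support on two of the four elements of $X_U$ and has unit Tamagawa-measure Petersson norm (with total volume $2$). Converting this normalization into the counting pairing $\sum_{[x]}|f([x])|^{2}w([x])^{-1}$ reduces to computing the four weights $w([x])$, using the explicit description of $\CR$ at $2$ and $3$ and the identification $X_U\cong\BZ_{9}^\times/\mu_{2}(1+3\BZ_{9})$; this is the principal technical ingredient. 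Once it gives $\langle f_\pm,f_\pm\rangle=2$, the proposition follows up to the sign ambiguity inherent in the choice of $f_\pm$.
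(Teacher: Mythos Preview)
Your overall strategy is exactly the paper's: specialize Theorem~\ref{cst-wald} with $\mu(N,D)=1$, $u=1$, factor $L(s,\phi,\chi_M)=L(s,E^{(M)})L(s,E^{(-3M)})=L(s,E^{(M)})^2$ via the $3$-isogeny $E^{(M)}\sim E^{(-3M)}$, use $\Omega(E^{(M)})=\Omega/\sqrt{M}$, and reduce to computing $(\phi,\phi)_{\Gamma_0(36)}$ and $\langle f_M,f_M\rangle$.

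Where you diverge is in the numerical values of these two quantities, and here there is a genuine gap. You assert $\langle f_\pm,f_\pm\rangle=2$, which would require the stabilizer weights $w([x])$ to equal $1$ on the support of $f_\pm$. They do not: the global order $\CR=B\cap\wh\CR$ contains $(1\pm k)/2$ (one checks they lie in $\CO_{K,2}+2\CO_{B,2}$ and in $\CO_{K,3}+\lambda\CO_{B,3}$), so $\CR^\times=\langle(1+k)/2\rangle\cong\BZ/6\BZ$ and $w([1])=3$. In fact all four weights equal $3$, so $\sum_x w(x)^{-1}=4/3$ and $\langle f_\pm,f_\pm\rangle=2/3$. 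The paper obtains this value without computing the $w([x])$ individually, by invoking the mass formula \cite[Lemma~3.5]{CST}: $\Vol(X_U)=4/3$, hence $\langle f_M,f_M\rangle=\tfrac{1}{2}\|f_M\|^2\,\Vol(X_U)=2/3$.

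Correspondingly, your value $8\pi^2(\phi,\phi)=\Omega^2\sqrt{3}$ differs by a factor of $3$ from the paper's $8\pi^2(\phi,\phi)=i\Omega\Omega^{-}=\Omega^2/\sqrt{3}$; the two discrepancies cancel in the ratio $(\phi,\phi)/\langle f,f\rangle$, which is why you land on the correct identity $L^{\alg}(1,E^{(M)})^2=2^{-2}|P_{\chi_M}(f_M)|^2$. So the plan is sound, but the asserted intermediate value $\langle f_M,f_M\rangle=2$ is wrong and would not survive the weight computation you propose; you should either carry out that computation honestly (getting $2/3$) or, more efficiently, appeal to the mass formula as the paper does.
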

\begin{proof}
By  Theorem  \ref{cst-wald},
$$L(1,E,\chi_M)=2^{-1}\frac{8\pi^2(\phi,\phi)_{\Gamma_0(36)}}{\sqrt{3}M}\frac{|P_{\chi_M}(f_M)|^2}{\langle f_M,f_M \rangle}.$$
Here,
\[\langle f_M,f_M \rangle = \frac{||f_M||^2}{2} \Vol(X_U)\]
and $\Vol(X_U)$ is the mass of $U$. By \cite[Lemma 2.2]{CST},
\[\Vol(X_U) = 2(4\pi^2)^{-1}\Vol(U)^{-1}\]
where $\Vol(U)$ is with respect to Tamagawa measures so that for any finite $p\not=3$,
$\Vol(\GL_2(\BZ_p)) = L(2,1_p)^{-1}$ and $\Vol(\CO_{B,3}^\times) = 2^{-1}L(2,1_3)^{-1}$.
By \cite[Lemma 3.5]{CST}, $\Vol(X_U) = 4/3$. Thus, $\deg_U f_M = 2/3$. On the other hand,
\[8\pi^2(\phi,\phi)_{\Gamma_0(36)} = 8\pi^2\int_{\Gamma_0(36) \bs \CH} |\phi(x+iy)|^2dxdy = i\Omega\Omega^-.\]
As $E^{(M)}$ and $E^{(-3M)}$ are isogenous over $\BQ$,
$L(s,E,\chi_M)=L(s,E^{(M)})L(s,E^{(-3M)}) = L(s,E^{(M)})^2$.
Denote by $\Omega^{(M)}$ the real period for $E^{(M)}$, then $\Omega^{(M)} = \Omega/\sqrt{M}$. Thus,
$L^\alg(1,E^{(M)})^2 = (L(1,E^{(M)})/\Omega^{(M)})^2 = ML(1,E,\chi_M)/\Omega^2$ and
$$L^\alg(1,E^{(M)})^2=2^{-2}|P_{\chi_M}(f_M)|^2.$$
\end{proof}

\subsection{Rank Zero Twists}
Keep the notations from the last section.
Denote by $\msa=\Gal(H_M/K)$, then $2\msa=\Gal(H_M/H_M^0)$, where $H_M^0=K(\sqrt{q}:q\mid M)$.
Let $\wh{\msa}$ (resp.$\wh{\msa/2\msa}$) be group of characters on $\msa$ (resp. on $\msa$ and factors through $\Gal(H_M^0/K)$).
Then
$$\sum\limits_{\chi\in\wh{\msa/2\msa}}P_\chi(f_M)=2^r y_0, \quad y_0 := \sum\limits_{\sigma\in2\msa}f_M(\sigma)$$
Note that each $\chi\in\wh{\msa/2\msa}$ corresponds to an integer $d|M$,
in the sense that $\chi$ corresponds to the extension $K(\sqrt{d})/K$.

\begin{prop}\label{prop:zero}
If $\chi\in\wh{\msa/2\msa}$ corresponds to an integer $d\neq M$, then $P_\chi(f_M)=0$
\end{prop}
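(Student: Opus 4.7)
The plan is to combine the norm relation for Gross points (Theorem~\ref{Nr:Gr}) with the vanishing $a_q=0$ at primes $q$ inert in the CM field $\BQ(\sqrt{-3})$, and then to extract $P_\chi(f_M)=0$ by character orthogonality on a suitable quotient of $\msa$.

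Since $d\mid M$ and $d\neq M$, I first pick a prime $q\mid M$ with $q\nmid d$. Let $H:=\ker\bigl(\Pic(\CO_M)\twoheadrightarrow\Pic(\CO_{M/q})\bigr)=\Gal(H_M/H_{M/q})$. Under the natural isomorphism $\msa/2\msa\cong (\BZ/2\BZ)^r$ coming from the local factors $\CO_{K,q_i}^\times/\CO_{M,q_i}^\times$ (each cyclic of order $q_i+1\equiv 2\pmod 4$ by the hypothesis $q_i\equiv 1\pmod 4$, so with $2$-torsion exactly $\BZ/2\BZ$), the image of $H$ in $\msa/2\msa$ is precisely the $q$-coordinate of $(\BZ/2\BZ)^r$. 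Since the character $\chi\in\wh{\msa/2\msa}$ corresponding to $d=\prod_{q_i\in T}q_i$ is nontrivial in the $q_i$-coordinate exactly when $q_i\in T$, and $q\notin T$, the character $\chi$ is trivial on the $q$-coordinate, hence $\chi|_H\equiv 1$.

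Next I apply Theorem~\ref{Nr:Gr} with $c=1$, $m=M/q$, and $\ell=q$ inert in $K$, obtaining the divisor identity
\[u_m\sum_{\sigma\in H}[\sigma\cdot x_M]=T_q[x_{M/q}]\qquad\text{in }\BC[X_U].\]
For any $t\in\msa$ with representative $\tilde t\in\wh K^\times$, left-multiplying by $\tilde t$ (which commutes with the right action of Hecke representatives) and then evaluating $f_\pm$ gives
\[u_m\sum_{\sigma\in H}f_M(t\sigma)=(T_qf_\pm)(\tilde t\,x_{M/q})=a_q\,f_\pm(\tilde t\,x_{M/q})=0,\]
since $f_\pm$ is a $T_q$-eigenvector with eigenvalue $a_q$, and $a_q=0$: the curve $E=(X_0(36),[\infty])$ has CM by $\BQ(\sqrt{-3})$ and $q$ is inert there. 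Partitioning $\msa$ into $H$-cosets and using $\chi|_H\equiv 1$ then yields
\[P_\chi(f_M)=\sum_{\bar t\in\msa/H}\chi(\bar t)\sum_{\sigma\in H}f_M(\bar t\,\sigma)\chi(\sigma)=\sum_{\bar t}\chi(\bar t)\cdot 0=0.\]

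The main obstacle I anticipate is step one: cleanly identifying the image of $H$ inside $\msa/2\msa$ with the single $(\BZ/2)$-coordinate indexed by $q$, and matching the classification $\chi\leftrightarrow d$ with the coordinate-wise decomposition of $\wh{(\BZ/2\BZ)^r}$. Everything downstream follows from one application of the Gross-point norm relation, the CM vanishing $a_q=0$, and a one-line coset partition of $\msa$.
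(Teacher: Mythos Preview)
Your proof is correct and follows essentially the same route as the paper: pick a prime $q\mid M$ with $q\nmid d$, use that $\chi$ is trivial on $\Gal(H_M/H_{M/q})$, apply the norm relation (Theorem~\ref{Nr:Gr}) for Gross points, and conclude via $a_q=0$. The only difference is that you spell out in detail why $\chi|_H\equiv 1$ (via the coordinate decomposition of $\msa/2\msa$), whereas the paper leaves this implicit when passing from $\chi(\sigma\tau)$ to $\chi(\sigma)$.
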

\begin{proof}
Choose primes $q\in \BN_S$ such that $qd|M$. Then
$$\begin{array}{rl}P_\chi(f_M)& =\displaystyle{\sum_{\sigma\in\msa}f_M(\sigma)\chi(\sigma)}\\
 & \displaystyle{=\sum_{\sigma\in\Gal(H_{M/q}/K)}\sum_{\tau\in\Gal(H_M/H_{M/q})}f_M(\sigma\tau)\chi(\sigma\tau)}\\
 & \displaystyle{=\sum_{\sigma\in\Gal(H_{M/q}/K)}\chi(\sigma)\sum_{\tau\in\Gal(H_M/H_{M/q})}f(\sigma\tau x_M)}
\end{array}$$
By Theorem \ref{Nr:Gr}, we have
\[u_{M/q}\sum_{\tau\in\Gal(H_M/H_{M/q})}f(\sigma\tau x_M)=a_qf(\sigma x_{M/q})=0.\]
So the proposition holds.
\end{proof}

By Proposition \ref{prop:zero}, we have the equality
$$P_{\chi_M}(f_M)=2^ry_0$$

\begin{lem}
   The values of $f_M|_{\wh{B}^{\times 2}}$ are odd. In particular, $y_0$ is odd and
   \[v_2(P_{\chi_M}(f_M)) = r.\]
\end{lem}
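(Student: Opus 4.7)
The plan is to evaluate $f_M$ on squares explicitly using the description of $f_\pm$ in Proposition \ref{basis}, then reduce the parity of $y_0$ to the parity of $|2\msa|$, which I will handle by a class-group computation.

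First I will show that $f_M(g) = \pm 1$ for every $g \in \wh{B}^{\times 2}$. Since $g$ is a square, $\Nr(g) \in \wh{\BQ}^{\times 2}$, so $\chi_0(g) = \eta(\Nr(g)) = 1$. By multiplicativity and the fact that $\eta(q_i) = -1$ for each inert prime $q_i$,
$$\chi_0(g x_M) = \chi_0(x_M) = \prod_{i=1}^r \eta(q_i) = (-1)^r.$$
By Proposition \ref{basis}(2), $f_+$ (resp.\ $f_-$) is supported exactly on the fibre $\chi_0 = +1$ (resp.\ $\chi_0 = -1$) and takes values $\pm 1$ on its support. Since $f_M = f_+$ for $r$ even and $f_M = f_-$ for $r$ odd, the sign condition $\chi_0(g x_M) = (-1)^r$ is exactly what is needed, so $f_M(g) = f_\pm(g x_M) = \pm 1$, an odd integer.

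For the ``in particular'' assertion, I observe that each $\sigma \in 2\msa$ can be written as $\tau^2$ for some $\tau \in \msa$; if $s \in \wh{K}^\times$ represents $\tau$, then $s^2$ represents $\sigma$. Since $s^2 \in \wh{K}^{\times 2} \subset \wh{B}^{\times 2}$, the previous step gives $f_M(\sigma) = f_M(s^2) = \pm 1$. Hence $y_0$ is a sum of $|2\msa|$ odd integers, and $y_0 \equiv |2\msa| \pmod 2$.

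It remains to check that $|2\msa|$ is odd, i.e.\ that the $2$-Sylow of $\msa = \Pic(\CO_M)$ is elementary abelian. Using the standard exact sequence
$$1 \to \CO_K^\times/\CO_M^\times \to (\CO_K/M\CO_K)^\times/(\BZ/M\BZ)^\times \to \Pic(\CO_M) \to \Pic(\CO_K) \to 1,$$
together with $\Pic(\CO_K) = 1$, $|\CO_K^\times/\CO_M^\times| = 3$, and the fact that each local quotient $(\CO_K/q_i)^\times/(\BZ/q_i\BZ)^\times$ is cyclic of order $q_i + 1$ (by inertness of $q_i$), I will compute that the $2$-rank of $\msa$ is exactly $r$. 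The hypothesis $q_i \equiv 5 \pmod{12}$ gives $q_i + 1 = 6 \cdot \mathrm{odd}$, so the $2$-part of $|\msa|$ is $2^r$ as well, forcing the $2$-Sylow to be $(\BZ/2\BZ)^r$ and $|2\msa|$ to be odd. This yields $y_0$ odd and $v_2(P_{\chi_M}(f_M)) = v_2(2^r y_0) = r$. The main obstacle is precisely this last step: pinning down both the $2$-part and the $2$-rank of $\Pic(\CO_M)$ and checking that they coincide, so that the $2$-Sylow is annihilated by $2$.
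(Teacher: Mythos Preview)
Your argument is correct and follows essentially the same route as the paper. The computation that $\chi_0(gx_M)=(-1)^r$ for $g\in\wh{B}^{\times 2}$ is identical to the paper's, and both proofs then reduce the parity of $y_0$ to the parity of $|2\msa|=[H_M:H_M^0]$. The only difference is cosmetic: the paper writes this index directly as $\frac{1}{3}\prod_{q\mid M}\frac{q+1}{2}$ and observes it is odd, whereas you arrive at the same conclusion by computing the $2$-Sylow of $\Pic(\CO_M)$ via the standard exact sequence; since each $q_i+1\equiv 6\pmod{12}$ contributes exactly one factor of $2$, both computations are the same in substance.
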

\begin{proof}
   By the definition of $f_M$,
   $f_M|_{\wh{B}^{\times 2}}$ is odd if and only if for any $g \in \wh{B}^{\times 2}$,
   $\chi_0(g x_M) = (-1)^r$. Since $\chi_0$ is quadratic, $\chi_0(g) = 1$. Then
   $\chi_0(g x_M) = \chi_0(x_M) = \prod_{i=1}^r \chi_0(x_{q_i}) = (-1)^r$ as $q_i$ is
   inert in $K$. Hence
   \[y_0 \equiv [H_M: H_M^0] \equiv \frac{1}{3}\prod_{q|M} \frac{q+1}{2} \equiv 1 \pmod{2}.\]
\end{proof}

\begin{proof}[The Proof of Theorem \ref{th:zero}]
By Proposition \ref{walds}, $v_2(L^\alg(1,E^{(M)}))=r-1$.
The 2-part of $BSD$ is equivalent to
\[v_2\left(L^\alg\Big(1,E^{(M)}\Big)\right)=
\sum\limits_{p|6M}v_2\left(c_p\Big(E^{(M)}\Big)\right)-2v_2\left(\#E^{(M)}_\tor\right)+
v_2\left(\#\Sha\Big(E^{(M)}/\BQ\Big)\right).\]
The Tamagawa numbers of $E^{(M)}$ are: $c_2(E^{(M)})=3$(resp.  $=1$) if
$M \equiv 1 \pmod{8}$ (resp. otherwise), $c_3(E^{(M)}) = 2$ and $c_q(E^{(M)})=2$ for $q|M$.
On the other hand, $E^{(M)}(\BQ) = E^{(M)}(\BQ)_\tor = \BZ/2\BZ$.
Finally, using classical $2$-descent, $\Sha(E^{(M)}/\BQ)[2]=0$.
Combine the results above, it is clear that the $2$-part of BSD conjecture holds.

By \cite[Theorem 11.1]{Ru}, the $p$-part of the BSD-conjecture for $E^{(M)}$ holds for $p\nmid 6$,
hence and the first part of Theorem \ref{th:zero} holds.
\end{proof}

\subsection{The Gross-Zagier Formula}

Let $K = \BQ(\sqrt{-\ell})$ with $\ell \equiv 11 \pmod{12}$. Let $N = 36$. Write $N = N_0N_1^2$ as before.
There are two cases:
\begin{enumerate}
	\item if $\ell \equiv -1 \pmod{24}$, then the Heegner hypothesis holds and $N = N_0 = 36$;
	\item if $\ell \equiv 11 \pmod{24}$, then $N_0 = 9$.
\end{enumerate}
Embed $K$ into $M_2(\BQ)$ as $i_c$ with $c=1$ in Example \ref{example-embed}. Precisely,
 take an odd integer $a$ with $4\cdot N_0|(\ell + a^2)$
 and embed $K$ into $M_2(\BQ)$ by
  \[\sqrt{-\ell} \mapsto
  \begin{pmatrix}
    a & 2 \\
    -\frac{\ell+a^2}{2} & -a
\end{pmatrix}.\]
Then $M_2(\BZ) \cap K = R_0(N_0)\cap K = \CO_K$.
Under such embedding, take $R = \CO_K + N_1R_0(N_0)$ and consider the modular curve $X_K(N)$. For the
Heegner hypothesis case, $X_K(N) = X_0(36)$. For another case, the modular curve $X_K(N)$ has genus one
and by Lemma \ref{lem-cusp}, the cusp $[\infty]$ is defined over $\BQ$. In fact, by \cite[Example 11.7.c]{GR},
$A := (X_K(N),[\infty])$ is the elliptic curve
\[y^2 = x^3 - 27 \quad \text{(36C)}\]
which is $3$-isogenous to $E$. We have $A(\BQ) = A(\BQ)[2] \cong \BZ/2\BZ$.
For the Heegner hypothesis case (resp. the another case), take $f$ to be
the identity morphism on $E$ (resp. on $A$). Denote by
\[j =  \begin{pmatrix}
		1 & 0 \\
		-a & -1
\end{pmatrix} \in K^-.\]

\begin{lem}\label{key-lemma}
Take $w \in \GL_2(\wh{\BQ})$ the Atkin-Lehner operator defined in Section \ref{section-X_K(N)}. Precisely,
for the Heegner hypothesis case, $w = j^{(36)}w_{2}w_{3}$ while for another case, $w = j^{(3)}w_3$.
Then $w$ normalize $\wh{R}^\times$ and $w=t_0ju$ for some $t_0\in\wh{K}^\times$
and $u\in \wh{R}^\times$. Moreover, $f+f^w$ is a constant map and its image is not in $2E(\BQ)$ for the Heegner hypothesis
case or not in $2A(\BQ) = \{O\}$ for the another case.
\end{lem}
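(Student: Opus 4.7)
The plan is to establish the two claims of the lemma successively.

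First, the normalization of $\wh R^\times$ by $w$ is a local check. At each prime $p\mid N_0$, the local Atkin-Lehner operator $w_p$ is well-known to normalize $R_0(N_0)_p$. At each prime $p\nmid N_0$, the relevant component of $j^{(N_0)}$ is $j\in\GL_2(\BZ_p)$; since $jkj^{-1}=\bar k$ for $k\in K$, conjugation by $j$ preserves $\CO_{K,p}$, and $j\in\GL_2(\BZ_p)$ preserves $M_2(\BZ_p)$, so $j$ normalizes $R_p$ both when $p\nmid N$ (where $R_p=M_2(\BZ_p)$) and when $p\mid N_1$ (where $R_p=\CO_{K,p}+N_1 M_2(\BZ_p)$). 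The decomposition $w=t_0ju$ with $t_0\in\wh K^\times$ and $u\in\wh R^\times$ is precisely the content of Lemma~\ref{Bir}.

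For the constancy of $f+f^w$, I invoke Albanese functoriality. Writing $f=\pi\circ\iota$ with $\iota(P)=[P-\infty]$ and $\pi\colon J\to E$ (resp.\ $A$) the induced homomorphism, the action of $w$ on $J$ gives $\iota(wP)=w_J(\iota(P))+[w\infty-\infty]$; composing with $\pi$,
\[
  f(wP)=\tilde w_E\bigl(f(P)\bigr)+f(w\infty),
\]
for an induced endomorphism $\tilde w_E\in\End(E)$ (resp.\ $\End(A)$). Thus $f+f^w$ is the constant $f(w\infty)$ precisely when $\tilde w_E=-1$, equivalently when $w$ acts by $-1$ on the $\phi$-isotypic line in $H^0(X_K(N),\Omega^1)$.

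To verify this sign: in the Heegner case ($N_0=36$), a local check shows $j^{(36)}\in\wh R^\times$, so $w$ acts on $X_K(N)=X_0(36)$ as the classical $w_{36}$; its eigenvalue on the newform attached to $E$ is $-1$, since the sign of $L(E,s)$ equals $+1$ (from $E(\BQ)\cong\BZ/6\BZ$, analytic rank $0$). In the non-Heegner case ($N_0=9$), the data $(\phi,K,\mathbf 1)$ satisfies condition $(*)$, and the global root number of $L(s,\phi,\mathbf 1)=L(s,E)L(s,E^{(-\ell)})$ is $-1$ (by the sign lemma, $E^{(-\ell)}$ contributes $-1$ while $E$ contributes $+1$, since $D=-\ell$ has the single prime factor $\infty$ among $\{2,3,\infty\}$); the analogous eigenvalue/root-number relation on $X_K(N)$ then forces the $w$-eigenvalue on the $\phi$-line to be $-1$.

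Finally, I identify the constant $f(w\infty)$. In the Heegner case, $w_{36}$ swaps $[\infty]$ and $[0]$, and under $f\colon X_0(36)\xrightarrow\sim E$ (with $f(\infty)=O$), $f([0])$ is the $\BQ$-rational $2$-torsion $T=(-1,0)$ of $y^2=x^3+1$, which can be read off from the cuspidal subgroup structure of $J_0(36)=E$. Since $E(\BQ)\cong\BZ/6\BZ$ and $2E(\BQ)\cong\BZ/3\BZ$, $T$ lies in the nontrivial coset, so $T\notin 2E(\BQ)$. In the non-Heegner case, $[\infty]$ is $\BQ$-rational (Lemma~\ref{lem-cusp}, as $\zeta_{N_1}=-1\in\BQ$) and $w$ is $\BQ$-rational, so $w\cdot[\infty]$ is a $\BQ$-rational cusp distinct from $[\infty]$; under $f\colon X_K(N)\xrightarrow\sim A$, its image lies in $A(\BQ)=A(\BQ)[2]=\{O,T'\}$, and direct computation via the decomposition $w=t_0ju$ and the moduli description of $X_K(N)$ identifies the image as $T'\ne O=2A(\BQ)$. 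The main obstacle is the sign calculation in the third paragraph, particularly for the non-Heegner case: relating the $-1$ eigenvalue of $w=j^{(3)}w_3$ on the $\phi$-differential to the global root number of $L(s,E^{(-\ell)})$ requires a careful adelic bookkeeping of how the normalizer factor $j^{(3)}$ and the Atkin-Lehner $w_3$ each contribute.
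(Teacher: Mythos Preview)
Your treatment of the first claim (normalization and the decomposition $w = t_0ju$) and of the Heegner-hypothesis case is essentially correct and matches the paper, though you misidentify the constant: the paper records that the cusp $[0]$ maps to $(2,3)$, the generator of $E(\BQ) \cong \BZ/6\BZ$, not to the $2$-torsion point $(-1,0)$. Your conclusion that it lies outside $2E(\BQ)$ is unaffected.

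The genuine gap is exactly where you flag it: the sign computation in the non-Heegner case. Your appeal to a global root-number identity---that the eigenvalue of $w = j^{(3)}w_3$ on the $\phi$-isotypic line should equal (minus) the sign of $L(s,E)L(s,E^{(-\ell)})$---has no a priori justification, because $w$ is not the classical Atkin--Lehner operator on an $X_0$-type curve, and the component $j_2$ at the inert prime $2$ is of a different nature from an Atkin--Lehner operator. The paper resolves this locally: the factor $w_3$ acts on $f$ by the local root number $\epsilon(A/\BQ_3) = +1$ (standard Atkin--Lehner theory), while for $j_2$ one observes that both $f$ and $f^{j_2}$ are $K_2^\times$-invariant and that such vectors span a line in $\Hom_{[\infty]}^0(X_K(N),A)$, so $f^{j_2} = \epsilon_2 f$ for some sign $\epsilon_2$. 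The Tunnell--Saito/Prasad dichotomy \cite{Pra} then identifies $\epsilon_2$ via a comparison of local root numbers at $2$, giving $\epsilon_2 = -1$ since $\epsilon(A/\BQ_2) = -1$. The product is $-1$, as needed. Your global strategy would in the end require exactly this local input, so it does not shortcut the argument.

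For the constant in the non-Heegner case, the paper's route is also cleaner than your deferred ``direct computation'': since $A(\BQ) = \{O,T'\}$, it suffices to show $[\infty]^w \neq [\infty]$, which reduces to the local statement $w_3 \notin P(\BQ_3)R_3^\times$.
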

\begin{proof}
	By Lemma \ref{Bir}, it suffices to prove the ``Moreover'' part.

	For the Heegner hypothesis case,
	denote by $\Hom_{[\infty]}(X_0(N),E)$ the space of $\BQ$-morphisms from $X_0(N)$ to $E$
	taking $[\infty]$ to $O$ and $\Hom^0_{[\infty]}(X_0(N),E) = \Hom_{[\infty]}(X_0(N),E) \otimes_\BZ \BQ$.
	By Atkin-Lehner theory, $f^w=-f$ in $\Hom^0_{[\infty]}(X_0(N),E)$.
	So $f^w+f$ is  a constant map.
	However, $f([\infty])=O$, $f^w([\infty])=f([0]) = [0]$, while $[0]$ is the generator of $E(\BQ) = \BZ/6\BZ$.
	Thus, the image of $f + f^w$ is not in $2E(\BQ)$.

	For another case, view $f \in \Hom_{[\infty]}^0(X_K(N),A)$.
        Then $f^{w_3} = \epsilon(A/\BQ_3)f = f$.
  As $f$ and $f^{j_2}$ are both $K_2^\times$-invariant and such vectors in $\Hom_{[\infty]}^0(X_K(N),A)$
  is of dimension $1$. There is a sign $\epsilon \in \{\pm 1\}$ such that $f^{j_2} = \epsilon_2 f$.
  By  \cite[Theorem 4]{Pra},
  the sign $\epsilon_2 = +1$ if and only if $\epsilon(A/\BQ_2)  = \epsilon(A^{(-\ell)}/\BQ_2) = 1$.
  Since $\epsilon(A/\BQ_2) = -1$, we obtain $f^{j_2} = -f$. Thus $f^w = -f$ and
  as a morphism from $X_K(N)$ to $A$, $f+ f^w = T$ for some torsion point $T \in A(\BQ)$.
  To see $T \not= O$, it suffices to show $[\infty] \not= [\infty]^w$. This equivalent to
  say $w \not\in P(\BQ)\wh{R}^\times$ with $P$ the upper-triangler matrices in $\GL_2$. This holds
  since $w_3 \not\in P(\BQ)R_3^\times$.
\end{proof}

Write
$\Isom_\BQ(A)$ for the group of algebraic isomorphisms of
$A$ over $\BQ$ and $\Aut_\BQ(A)$ the subrgroup of algebraic isomorphisms
over $\BQ$ which fix $O$. Then $\Aut_\BQ(A) = \BZ/2\BZ$ is generated by
multiplication $-1$ and $\Isom_\BQ(A) = \langle t_T \rangle \times
\Aut_\BQ(A)$ where $t_T: P \mapsto P +T$ for any $P \in A$.

\begin{lem}
  For any $P \in A$, $P^{w_3} = t_T(P)$ and $P^{j^{(3)}} = -P$.
\end{lem}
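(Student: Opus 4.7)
The plan is to identify both $w_3$ and $j^{(3)}$ as elements of the finite group $\Isom_\BQ(A)$ of $\BQ$-algebraic automorphisms of $A$. Since $A: y^2 = x^3 - 27$ has $j$-invariant zero but acquires its full automorphism group only over $\BQ(\sqrt{-3})$, we have $\Aut_\BQ(A) = \{\pm 1\}$; combined with $A(\BQ) = \{O, T\}$, this gives the Klein four group $\Isom_\BQ(A) = \{\text{id},\, t_T,\, -1,\, -t_T\}$. Both $w_3$ and $j^{(3)}$ act as $\BQ$-algebraic automorphisms of $X_K(N) = A$, so each lies in this finite list.

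To determine $w_3$, I would use the identity $f^{w_3} = \epsilon(A/\BQ_3) f = f$ in $\Hom_{[\infty]}^0(X_K(N), A)$ established in the proof of Lemma \ref{key-lemma}. Taking $f = \text{id}$, this identity says that after subtracting the constant $w_3(O)$ the map $w_3$ agrees with the identity in $\End^0(A) = \BQ$; equivalently, $w_3$ is a pure translation and hence $w_3 \in \{\text{id},\, t_T\}$. To exclude the trivial case I would invoke the equivariance $\xi \circ w_3 = w_3 \circ \xi$ of the forgetful morphism $\xi: X_K(N) \to X_0(N_0) = X_0(9)$ noted in Section \ref{section-X_K(N)}, together with the standard fact that the local Atkin-Lehner operator $\mat(0,1,9,0)$ acts non-trivially on $X_0(9)$ (it permutes cusps non-trivially). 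Therefore $w_3 = t_T$.

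To determine $j^{(3)}$, I would combine $w_3 = t_T$ with the conclusion $P + P^w = T$ of Lemma \ref{key-lemma}, i.e.\ $P^w = T - P$, where $w = j^{(3)} w_3$. Writing $P^{j^{(3)}} = \varepsilon P + c'$ with $\varepsilon \in \{\pm 1\}$ and $c' \in \{O, T\}$, and using the right-action convention $P^w = \bigl(P^{j^{(3)}}\bigr)^{w_3}$ dictated by the adelic uniformization of $X_K(N)$, we obtain
\[P^w = (\varepsilon P + c') + T = \varepsilon P + c' + T,\]
which must equal $-P + T$. Matching the linear part forces $\varepsilon = -1$, and then $c' + T = T$ forces $c' = O$ (using $2T = O$). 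Hence $P^{j^{(3)}} = -P$.

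The main technical point to watch is the translation of the equalities ``$f^{w_3} = f$'' and ``$f^w = -f$'' in $\Hom^0$ into pointwise identities on $A$ up to a $\BQ$-rational constant, together with a consistent composition convention for $w = j^{(3)} w_3$ compatible with the proof of Lemma \ref{key-lemma}. Once these are clarified, the remaining manipulations are elementary, and the non-triviality of the Atkin-Lehner involution on $X_0(9)$ is the only ``geometric'' input needed.
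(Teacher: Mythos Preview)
Your proposal is correct and follows essentially the same route as the paper: both identify $w_3$ and $j^{(3)}$ inside the Klein four group $\Isom_\BQ(A)$, use $f^{w_3}=f$ in $\Hom^0_{[\infty]}(X_K(N),A)$ to see that $w_3$ is a pure translation, pin it down as $t_T$ by a non-triviality check, and then read off $j^{(3)}=-1$ from $w=j^{(3)}w_3$ together with $P^w=T-P$. The one small difference is the non-triviality check: the paper simply quotes from the proof of Lemma~\ref{key-lemma} that $w_3\notin P(\BQ)R_3^\times$, hence $[\infty]^{w_3}\neq[\infty]$, and concludes $(P^w)^{w_3^{-1}}=-P$ in one line; you instead descend along $\xi:X_K(N)\to X_0(9)$ and invoke the classical Atkin--Lehner action on cusps of $X_0(9)$. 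Your detour is valid, but since the needed fact $w_3\notin P(\BQ)R_3^\times$ is already recorded in the preceding proof, citing it directly is cleaner.
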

\begin{proof}
  In the above proof, we have seen that $w_3 \not\in PU_3$. Thus $[\infty]^{w_3} \not= [\infty]$.
  Hence for any point $P$, $P^{w_3} = t_T(P)$. On the other hand, $P^{w} = t_T(-P)$. Therefore,
  $P^{j^{(3)}} = (P^w)^{w_3^{-1}} = -P$.
\end{proof}

Let $M = \prod_i q_i$ where $q_i$ are distinct positive integers $\equiv 5 \pmod{12}$.
Denoted by $\chi_M$ the quadratic character of $K$ associated to the extension $K(\sqrt{M})/K$.
Let $P_M \in X_K(N)(H_M)$ be
the Heegner point defined in \ref{euler}. Consider
\[P_{\chi_M}(f) = \sum_{\sigma \in \Gal(H_M/K)} f(P_M)^\sigma \chi_M(\sigma) \in E(K).\]

\begin{prop}\label{Gr1}
	Up to $\pm 1$,
	\[L^\alg(1,E^{(M)})\frac{L^{'}(1,E^{(-\ell M)})}{\Omega(E^{(-\ell M)})} = \wh{h}_K(P_{\chi_M}(f)).\]
\end{prop}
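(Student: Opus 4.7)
The plan is to apply the explicit Gross-Zagier formula (Theorem \ref{GZ}) to the triple $(\phi,K,\chi_M)$ with conductor $c=M$, decompose the Rankin-Selberg $L$-function into a product of two quadratic twist $L$-functions, and simplify using the Petersson-period identity recorded in the proof of Proposition \ref{walds}. First I would verify the hypothesis $(*)$: coprimality $(M,36)=1$ follows from $q_i\equiv 5\pmod{12}$; both $2$ and $3$ split in $K$ in the Heegner-hypothesis case $\ell\equiv -1\pmod{24}$, while $3$ splits and $2$ is inert with $\ord_2 N=2$ even in the other case $\ell\equiv 11\pmod{24}$. With $D_K=-\ell$, $u=[\CO_K^\times:\{\pm 1\}]=1$, $\mu(N,D_K)=0$ and $\deg f=1$ (since $f$ is the identity on $E$ or on $A$), Theorem \ref{GZ} reads
\[L'(1,E,\chi_M)=\frac{8\pi^2(\phi,\phi)_{\Gamma_0(36)}}{M\sqrt{\ell}}\cdot\wh h_K(P_{\chi_M}(f)).\]

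Next I would factor the $L$-function. The quadratic ring class character $\chi_M$ is the restriction to $\Gal(\ov\BQ/K)$ of the Dirichlet character $\wt\chi_M$ cutting out $\BQ(\sqrt{M})/\BQ$, so by Frobenius reciprocity $\Ind_K^\BQ\chi_M=\wt\chi_M\oplus\wt\chi_M\eta_K$, where $\eta_K$ is the quadratic character of $K/\BQ$; the two summands correspond to the Dirichlet characters of $\BQ(\sqrt M)/\BQ$ and $\BQ(\sqrt{-\ell M})/\BQ$. Hence $L(s,E,\chi_M)=L(s,E^{(M)})L(s,E^{(-\ell M)})$. By the sign lemma at the beginning of this section, no prime of $\{2,3,\infty\}$ divides the fundamental discriminant $M$ (since each $q_i$ is coprime to $6$ and $M>0$), giving $\epsilon(E^{(M)})=+1$; exactly $\infty$ divides the fundamental discriminant $-\ell M$, giving $\epsilon(E^{(-\ell M)})=-1$. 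Therefore $L(1,E^{(-\ell M)})=0$ and
\[L'(1,E,\chi_M)=L(1,E^{(M)})L'(1,E^{(-\ell M)})=L^\alg(1,E^{(M)})\,\Omega(E^{(M)})\,L'(1,E^{(-\ell M)}).\]

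Finally, combining the standard twist formulas $\Omega(E^{(M)})=\Omega/\sqrt{M}$ and $\Omega(E^{(-\ell M)})=i\Omega^-/\sqrt{\ell M}$ (with $\Omega^-$ chosen so that $i\Omega\Omega^->0$) with the Petersson identity $8\pi^2(\phi,\phi)_{\Gamma_0(36)}=i\Omega\Omega^-$ from the proof of Proposition \ref{walds} yields $\Omega(E^{(M)})\Omega(E^{(-\ell M)})=8\pi^2(\phi,\phi)_{\Gamma_0(36)}/(M\sqrt\ell)$. Substituting this into the Gross-Zagier equation and dividing by $\Omega(E^{(M)})$ cancels all transcendental data, yielding the claimed identity up to $\pm 1$. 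The main subtlety is that in the non-Heegner case $f$ lands on the $3$-isogenous curve $A=36C$, so $\wh h_K(P_{\chi_M}(f))$ is a height on $A(K)$ rather than $E(K)$; this causes no discrepancy, since Gross-Zagier applies to whichever curve $f$ targets and both $(\phi,\phi)_{\Gamma_0(36)}$ and $L(s,E,\chi_M)=L(s,A,\chi_M)$ depend only on the newform, so the substitutions go through verbatim, any factors of $3$ from the isogeny comparison cancel, and the overall sign ambiguity is absorbed into the ``up to $\pm 1$''.
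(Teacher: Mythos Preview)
Your proof is correct and follows essentially the same route as the paper: apply Theorem~\ref{GZ} with the parameters $u=1$, $\mu(N,D)=0$, $\deg f=1$, factor $L(s,E,\chi_M)=L(s,E^{(M)})L(s,E^{(-\ell M)})$, and cancel the Petersson norm against the product $\Omega(E^{(M)})\Omega(E^{(-\ell M)})$ via the identity $8\pi^2(\phi,\phi)_{\Gamma_0(36)}=i\Omega\Omega^-$. The only minor difference is that you justify $L(1,E^{(-\ell M)})=0$ directly from the sign lemma, whereas the paper invokes the nonvanishing of $L(1,E^{(M)})$ established in the rank-zero section; and you make explicit the $A$ versus $E$ compatibility in the non-Heegner case, which the paper leaves implicit here.
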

\begin{proof}
By  Theorem \ref{GZ},
$$L'(1,E,\chi_M)=\frac{8\pi^2(\phi,\phi)_{\Gamma_0(36)}}{\sqrt{\ell}M}\cdot\wh{h}_K(P_{\chi_M}(f)).$$
Since $L(s,E,\chi_M)=L(s,E^{(M)})L(s,E^{(-\ell M)})$, and we have proved that $L(s,E^{(M)})$ is nonvanishing at $s=1$,
$$L'(1,E,\chi_M)=L(1,E^{(M)})L'(1,E^{(-\ell M)}).$$
As in in the proof of \ref{walds}
\[8\pi^2(\phi,\phi)_{\Gamma_0(36)} = 8\pi^2\int_{\Gamma_0(36) \bs \CH} |\phi(x+iy)|^2dxdy = i\Omega\Omega^-.\]
By \cite{VP}, we know $\Omega(E^{(M)})=\Omega/\sqrt{M}$ and up to sign
$\Omega(E^{(-\ell M)})=\Omega^-/\sqrt{-\ell M}$, so up to sign
$$\Omega(E^{(M)})\Omega(E^{(-\ell M)})=\frac{\Omega}{\sqrt{M}}\frac{\Omega^-}{\sqrt{-\ell M}}
=-\frac{8\pi^2(\phi,\phi)_{\Gamma_0(36)}}{M\sqrt{\ell}}$$
Thus up to sign:
$$L^\alg(1,E^{(M)})\frac{L^{'}(1,E^{(-\ell M)})}{\Omega(E^{(-\ell M)})}=\wh{h}_K(P_{\chi_M}(f))$$
\end{proof}

\subsection{Rank One Twists}

Let $\ell$ be a prime with $\ell \equiv 11 \pmod{12}$. Denote by $K = \BQ(\sqrt{-\ell})$.
We only prove Theorem \ref{th:one} in the case $\ell \equiv 11 \pmod{24}$,  that is,
$2$ is inert in $K$ and $3$ is split in $K$, while its proof for the other case is similar.

Let $M = q_1 \cdots q_r$ where $q_i$ are distinct primes such that $q_i \equiv 5 \pmod{12}$ and
inert in $K$. Denote by $\msa=\Gal(H_M/K)$. Then $2\msa=\Gal(H_M/H_M^0)$,
where $H_M^0=K(\sqrt{q}:q\mid M)$. Let $\wh{\msa}$ (resp.$\wh{\msa/2\msa}$) be the group of characters on $\msa$
(resp. on $\msa$ and factors through $\Gal(H_M^0/K)$).

Let $A$ be the elliptic curve $y^2=x^3-27$. Observe that $A(H_M^0)[2^\infty]=A(\BQ)[2^\infty]=A(\BQ)[2]$. In fact, suppose
$Q\in A(H_M^0)[2^\infty]$ but $Q\notin A(\BQ)[2^\infty]$. Then the extension $\BQ(Q)/\BQ$  is unramified outside $2$ and $3$.
However, as $\BQ(Q) \subset H_M^0$, $\BQ(Q)/\BQ$ must be ramified at $\ell$ or $q_i$ for some $i$. A contradiction.
Let $T$ be the nontrivial element in $A(\BQ)[2]$, and $C=\#A(H_M^0)_\tor/2$ be the cardinality of odd part of $A(H_M^0)_\tor$.
Denote by
\[y_M = P_{\chi_M}(f) = \sum_{\sigma \in \msa} f(P_M)^{\sigma}\chi_M(\sigma) \in A(H_M^0).\]
Then the same as Proposition \ref{prop:zero}, we have
$$y_M=2^ry_0, \quad y_0 := \sum\limits_{\sigma\in 2\msa}f(P_M)^{\sigma},$$
as equality of points in $A(H_M^0)$. The key point is the following lemma:
\begin{lem}\label{lem:key}
$$\ov{y}_0+y_0=T$$
\end{lem}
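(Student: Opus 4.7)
The plan is to combine Lemma \ref{key-lemma} with the identification $P_M^w = [t_0]\cdot \ov{P_M}$ in $X_K(N)(H_M)$, where $[t_0]\in\msa = \wh K^\times / K^\times\wh\CO_M^\times$ is the class of the idele $t_0$ from Lemma \ref{Bir}, and then use that $|2\msa| = [H_M:H_M^0]$ is odd (as established in the proof of Theorem \ref{th:zero}). Applying Lemma \ref{key-lemma} to $P_M^\sigma$ for each $\sigma\in 2\msa$ gives $Q^\sigma + f((P_M^\sigma)^w) = T$; summing over $\sigma \in 2\msa$ yields
\[y_0 + \sum_{\sigma\in 2\msa} f((P_M^\sigma)^w) = |2\msa|\,T = T,\]
using $2T = 0$. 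It therefore suffices to show $\sum_{\sigma\in 2\msa} f((P_M^\sigma)^w) = \ov y_0$, which follows from $P_M^w = [t_0]\cdot \ov{P_M}$ together with $[t_0]\in 2\msa$ (since then $\sigma \mapsto [t_0]\sigma$ permutes $2\msa$, so that $\sum_{\sigma\in 2\msa}\ov Q^{[t_0]\sigma} = \ov y_0$).

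To establish $P_M^w = [t_0]\cdot\ov{P_M}$, write $P_M = [h_0, g_M]$ adelically with $g_M \in \GL_2(\wh\BQ)$ chosen supported at primes of $M$; then $g_M$ commutes with $t_0$, whose only non-trivial component is at $p = 3 \mid N_0$, since $(M, N_0) = 1$. By Lemma \ref{Bir} ($w = t_0 j u$ with $u\in \wh R^\times$) and Shimura reciprocity ($\ov{[h_0, g]} = [h_0, jg]$),
\[P_M^w = [h_0, g_M w] = [h_0, t_0 g_M j u].\]
The key adelic identity $g_M j = (jg_M)\cdot u'$ for some $u'\in \wh R^\times$ is checked prime-by-prime: at each $q \mid M$, the direct computation gives $(jg_q)^{-1}(g_q j) = \mat(1, 0, a(q-1), 1) \in \GL_2(\BZ_q) = \wh R_q^\times$, while at other primes $(g_M)_p = 1$ and the identity is trivial. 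Therefore $P_M^w = [h_0, t_0(jg_M) u' u] = [h_0, t_0\cdot jg_M] = [t_0]\cdot\ov{P_M}$.

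Finally, $[t_0]\in 2\msa$ is verified by a local calculation at $p = 3$. By Lemma \ref{Bir}, $t_{0,p} = 1$ for $p \neq 3$ and $t_{0,3} = x + y\sqrt D$ with $\ord_3 y = -2$ and $x + ay \in \BZ_3^\times$. Since $3$ splits in $K = \BQ(\sqrt{-\ell})$ as $\fp_3 \ov{\fp}_3$ (because $-\ell\equiv 1 \pmod 3$) and $a^2 \equiv D \pmod 9$, writing $K_3 \simeq \BQ_3\oplus\BQ_3$ via $\sqrt D \mapsto (\delta, -\delta)$ with $\delta \equiv a \pmod 9$, one finds $\ord_3(x + y\delta) = 0$ and $\ord_3(x - y\delta) = -2$, hence $(t_0) = \ov{\fp}_3^{-2}$ and $[t_0] = [\ov{\fp}_3]^{-2}\in 2\msa$. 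The main obstacle I anticipate is the verification of the adelic identity $g_M j = (jg_M)u'$ (for the specific choice of $g_M$) together with the local analysis identifying $(t_0) = \ov{\fp}_3^{-2}$; both are elementary once one pins down the explicit forms of $g_M$, $t_0$, and the embedding of $K$ into $M_2(\BQ)$.
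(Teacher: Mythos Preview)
Your proof is correct and follows essentially the same strategy as the paper's: write $w = t_0 j u$, commute $g_M$ past $t_0$ (disjoint support), check $g_Mj \equiv jg_M \pmod{\wh R^\times}$ at each $q\mid M$, show $[t_0]\in 2\msa$, and then sum Lemma~\ref{key-lemma} over $2\msa$ using that $|2\msa|$ is odd.

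Two small remarks. First, your justification that $[t_0]\in 2\msa$ via the explicit factorization $(t_0)=\ov\fp_3^{-2}$ is a nice variant; the paper instead takes the determinant of $w=t_0ju$ to see $N_{K/\BQ}(t_0)\in 9\BZ_3^\times$ (trivial elsewhere) and then observes that the Artin symbol of $N(t_0)$ fixes each $\sqrt q$. Both arguments give the same conclusion, and your local splitting computation using $a^2\equiv D\pmod 9$ is clean. Second, your citation for the oddness of $|2\msa|=[H_M:H_M^0]$ is slightly misplaced: Theorem~\ref{th:zero} concerns $K=\BQ(\sqrt{-3})$, whereas here $K=\BQ(\sqrt{-\ell})$. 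The correct reason (which the paper records inside the proof of Lemma~\ref{lem:key}) is that $[H_M:H_M^0]=h_K\prod_{q\mid M}\frac{q+1}{2}$, with $h_K$ odd by genus theory for $\ell\equiv 3\pmod 4$ prime and each $\frac{q+1}{2}$ odd since $q\equiv 1\pmod 4$.
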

\begin{proof}
By Lemma \ref{Bir}, one can write $w = t_0ju$ with $t_0 \in \wh{K}^\times$, $j = K^{-}$ and
$u \in \wh{R}^\times$. Take $x_M \in \wh{B}^\times$ such that
$P_M = [z,x_M] \in X_K(N)(H_M)$ with $z \in \CH^{K^\times}$.
Thus, for any $\sigma_t \in 2\msa$ with $t \in \wh{K}^\times$
\[f^w(P_M)^{\sigma_t} = f([h_0,tx_Mt_0j]).\]
Note that $x_M \in \GL_2(\BQ^{(N)})$ while $t_0 \in K_{(N)}^\times \subset \GL_2(\BQ_{(N)})$.
Hence $x_Mt_0 = t_0x_M$ and
\[f^w(P_M)^{\sigma_t} = f([h_0,x_Mj])^{\sigma_{tt_0}}.\]
Finally, we need to show that $x_Mj \in jx_MU$. This reduces to show that for any $q|M$,
the $q$-part of $x_M^{-1}j^{-1}x_Mj$ belongs to $R_q^\times = \GL_2(\BZ_q)$.  It is easy to
check this holds. Thus
\[f^w(P_M)^{\sigma_t} = f([h_0,jx_M])^{\sigma_{tt_0}} = \ov{f([h_0,x_M])}^{\sigma_{tt_0}}.\]

On the other hand, note that in the proof of Lemma \ref{Bir}, $t_{0,p} = 1$ for any $p \nmid N_0 = 9$.
Denote by $a_0 = N_{K/\BQ}(t_0) \in \wh{\BQ}^\times$. Take determinant for the equation $w = jt_0u$.
Then $a_{0,p} = 1$ if $p \not= 3$ and $a_{0,3} \in 9\BZ_3^\times$. Thus for any prime $q|M$
\[\sigma_{t_0}(\sqrt{q}) = \sigma_{a_0}(\sqrt{q}) = \sqrt{q}\]
where $\sigma_{a_0} \in \Gal(\BQ(\sqrt{q})/\BQ)$ via the Artin map over $\BQ$. Hence, $\sigma_{t_0} \in 2\CA$.

Sum up, since $\displaystyle{[H_M:H_M^0]=[H:K]\prod_{q|M}\frac{q+1}{2}}$ is odd, we get
$$\ov{y}_0+y_0=\sum_{\sigma\in2\msa}(f+f^w)(P_M)^{\sigma}=[H_M:H_M^0]T=T.$$
\end{proof}

\begin{thm}\label{thm:in}
$y_M\in A(K(\sqrt{M}))^-$ and the 2-index of $y_M$ is $r-1$ in $A(K(\sqrt{M}))$.
\end{thm}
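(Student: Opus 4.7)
\medskip

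\noindent\textit{Proof plan.} My plan is to verify the two assertions in sequence, with Lemma \ref{lem:key} playing the central role, and assuming throughout that $r\geq 1$.

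\medskip

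\noindent\textbf{Part 1: $y_M\in A(K(\sqrt{M}))^-$.} Since $\chi_M$ factors through $\Gal(K(\sqrt{M})/K)$, a routine reindexing of the defining sum shows that $y_M$ is invariant under $\Gal(H_M/K(\sqrt{M}))$, so $y_M\in A(K(\sqrt{M}))$. Let $\rho$ denote complex conjugation on $H_M$; its restriction to $K(\sqrt{M})$ is the nontrivial element of $\Gal(K(\sqrt{M})/\BQ(\sqrt{M}))$, and since $\sqrt{M}\in\BR$ we have $\chi_M(\rho)=1$. Hence $y_M^\rho=2^r\bar y_0$, and Lemma \ref{lem:key} gives
\[y_M+y_M^\rho = 2^r(y_0+\bar y_0) = 2^rT = O,\]
using $r\geq 1$ and $2T=O$.

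\medskip

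\noindent\textbf{Part 2 (lower bound $y_M\in 2^{r-1}A(K(\sqrt{M}))^-$).} I plan to show that $2y_0\in A(K(\sqrt{M}))^-$, so that $y_M=2^{r-1}\cdot(2y_0)$. The minus-part claim is immediate from $2y_0+2\bar y_0=2T=O$. For rationality over $K(\sqrt{M})$, I use the fact $A(H_M^0)[2^\infty]=A(\BQ)[2]=\{O,T\}$, established earlier in the section by a ramification argument. For each $\tau\in\Gal(H_M^0/K(\sqrt{M}))$ we have
\[2^r(y_0^\tau-y_0)=y_M^\tau-y_M=O,\]
so $y_0^\tau-y_0\in A(H_M^0)[2^r]=\{O,T\}$, and in particular $2(y_0^\tau-y_0)=O$, giving $2y_0\in A(K(\sqrt{M}))^-$.

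\medskip

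\noindent\textbf{Part 3 (sharpness $y_M\notin 2^r A(K(\sqrt{M}))$).} Suppose for contradiction that $y_M=2^r z$ with $z\in A(K(\sqrt{M}))$. Then $z-y_0\in A(H_M^0)[2^r]=\{O,T\}$, so $y_0\in A(K(\sqrt{M}))$, i.e., $y_0^\tau=y_0$ for every $\tau\in\Gal(H_M^0/K(\sqrt{M}))$. The plan is then to exhibit some $\tau$ in this Galois group with $y_0^\tau-y_0=T\neq O$, yielding the desired contradiction. The natural candidates are Frobenii at the primes $q_i\mid M$ (or products thereof); their action on Heegner points is controlled by Theorem \ref{Nr:He}, and because $a_{q_i}=0$ for each inert $q_i$ the Euler system relations force an analog of Proposition \ref{prop:zero}, namely $P_\chi(f)=0$ for every $\chi\in\widehat{\msa/2\msa}$ with $\chi\neq\chi_M$. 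Combined with Lemma \ref{key-lemma}, which ensures that $f+f^w$ is the nonzero $2$-torsion point $T$, this should pin down $y_0^\tau-y_0=T$ for a suitable Frobenius $\tau$.

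\medskip

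\noindent\textbf{Main obstacle.} The hard step will be Part 3: extracting the explicit $2$-torsion shift $y_0^\tau-y_0=T$ from the interplay of the Euler system (Theorem \ref{Nr:He}), the Atkin-Lehner identity of Lemma \ref{key-lemma}, and the torsion structure $A(H_M^0)[2^\infty]=\{O,T\}$. Once this nontriviality is in hand, Parts 1 and 2 are essentially bookkeeping built on top of Lemma \ref{lem:key} and the torsion statement.
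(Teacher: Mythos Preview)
Parts 1 and 2 are correct; your direct argument in Part 2 is in fact a bit cleaner than the paper's inflation--restriction packaging, but the content is the same.

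The gap is Part 3. Your plan---to locate a Frobenius $\tau\in\Gal(H_M^0/K(\sqrt{M}))$ with $y_0^\tau-y_0=T$ by pushing the Euler system relations at the $q_i$---is heading in the wrong direction, and nothing in Theorem \ref{Nr:He} or Lemma \ref{key-lemma} produces such a $\tau$. The sharpness actually comes from \emph{complex conjugation} together with Lemma \ref{lem:key}, which you have already used but not exploited fully. The paper first sharpens the lower bound: your element $2y_0\in A(K(\sqrt{M}))^-$ in fact lies in the smaller group $A(\BQ(\sqrt{-\ell M}))^-$, since $2y_0+2\bar y_0=2T=O$ and, from $y_M^\sigma=-y_M$, also $2y_0+(2y_0)^\sigma=O$. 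One then tests $2^r$-divisibility \emph{in that group, modulo torsion}: if $y_M=2^rz+t$ with $z\in A(\BQ(\sqrt{-\ell M}))^-$ and $t$ torsion, and $C$ is the odd part of $\#A(H_M^0)_\tor$, then $C(z-y_0)\in A(H_M^0)[2^\infty]=A(\BQ)[2]$ is fixed by complex conjugation, so
\[
O=2C(z-y_0)=C(z-y_0)+C(\bar z-\bar y_0)=C(z+\bar z)-C(y_0+\bar y_0)=-CT,
\]
contradicting Lemma \ref{lem:key} since $C$ is odd. The missing idea is therefore not a new Euler-system computation, but to pass to $A(\BQ(\sqrt{-\ell M}))^-$ (where $z+\bar z=0$ holds by construction) before testing divisibility, and then let $y_0+\bar y_0=T$ finish the job.
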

\begin{proof}
Consider the maps
$$\xymatrix{
        &                                          & A(K(\sqrt{M}))/2^rA(K(\sqrt{M}))\ar[d]^\delta & \\
0\ar[r] & H^1(H_M^0/K(\sqrt{M}),A[2^r](H_M^0))\ar[r] & H^1(K(\sqrt{M}),A[2^r])\ar[r] & H^1(H_M^0,A[2^r])
}$$
where $\delta$ is the Kummer map, which is injective, and the horizontal line is the inflation-restriction exact sequence.
Since $y_M=2^ry_0$ with $y_0\in A(H_M^0)$, the image of $\delta(y_M)$ is $0$ in $H^1(H_M^0,A[2^r])$,
hence $\delta(y_M)$ lies in the image of $H^1(H_M^0/K(\sqrt{M}),A[2^r](H_M^0))$,
which is killed by $2$. It follows that $2y_M\in 2^rA(K(\sqrt{M}))$, then
$$y_M=2^{r-1}z+t,z=2y_0+s$$
for some $z\in A(K(\sqrt{M}))$ and $s,t\in A(\BQ)[2]$.

Let $\sigma\in\Gal(K(\sqrt{M})/K)$ be the nontrivial element. Then by definition,
we have $y_M+y_M^\sigma=0$, so $y_0+y_0^\sigma\in A(H_M^0)[2^r]=A(\BQ)[2]$, so $z+z^\sigma=0$. On the other hand
$$z+\ov{z}=2(y_0+\ov{y}_0)=0,$$
which implies $z\in A(\BQ(\sqrt{-\ell M}))^-=A^{(-\ell M)}(\BQ)$. Therefore
$$y_M\in 2^{r-1}A(\BQ(\sqrt{-\ell M}))^-+A(\BQ)[2].$$

We will show that the $2$-index of $y_M$ is exactly $r-1$. Suppose that $y_M=2^rz+t$ for some $z\in A(\BQ(\sqrt{-\ell M}))^-$
and $t\in A(\BQ(\sqrt{-\ell M}))_\tor$. Then $2^r(z-y_0)+t=0$. which implies $C(z-y_0)\in A(\BQ)[2]$.
Operating by complex conjugation and plus together, $C(z-y_0)+C(\ov{z}-\ov{y}_0)=0$. But we have $z+\ov{z}=0$,
so $C(y_0+\ov{y}_0)=0$. But it contradicts to the fact that $\ov{y}_0+y_0=T\neq 0$.
\end{proof}

\begin{proof}[The Proof of Theorem \ref{th:one}]
Observe that $A$ and $E$ are $3$-isogeny, so to prove Theorem \ref{th:one}, we only need to prove that it is holds for $A$.

By Proposition \ref{Gr1}, up to $\pm 1$,
\[L^\alg(1,A^{(M)})\frac{L^{'}(1,A^{(-\ell M)})}{\Omega(A^{(-\ell M)})} = \wh{h}_K(y_M).\]
Denote by $R(-\ell M) = \wh{h}(P_{-\ell M})$ where $P_{-\ell M}$ is the generator of
$A^{(-\ell M)}(\BQ)/A^{(-\ell M)}(\BQ)_\tor$. In particular, by Theorem \ref{thm:in}
\[\wh{h}_K(y_M) = 2^{2r-1}R(-\ell M)\]
Thus, if denote by
\[L^{'\alg}(s,A^{(-\ell M)}) = \frac{L^{'}(s,A^{(-\ell M)})}{R(-\ell M)\Omega(A^{(-\ell M)})}\]
then by the result of rank zero case, we have
$$v_2(L^{'\alg}(1,A^{(-\ell M)}))=r.$$

The Tamagawa numbers of $A^{(-\ell M)}$ are: $c_2(A^{(-\ell M)})=1\text{ or }3$, $c_3(A^{(-\ell M)}) = 2$
and $c_q(A^{(-\ell M)})=2$ for $q|\ell M$.
On the other hand, $A^{(-\ell M)}(\BQ) = A^{(-\ell M)}(\BQ)_\tor = \BZ/2\BZ$.
Finally, using classical $2$-descent, $\Sha(A^{(-\ell M)}/\BQ)[2]=0$.
Combine the results above, it is clear that the $2$-part of BSD conjecture for $A^{(-\ell M)}$ holds.

Since $A^{(-\ell M)}$ has CM, so the $p$-adic height paring is nontrivial.
Then by \cite[Corollary 1.9]{PR}, $p$-part of the BSD conjecture for $A^{(-\ell M)}$ holds for $p\nmid 6\ell M$.
So the second part of Theorem \ref{th:one} holds for $A$, and hence for $E$.
\end{proof}

\end{document}